\g@addto@macro\normalsize{
  \setlength\abovedisplayskip{8pt}
  \setlength\belowdisplayskip{8pt}
  \setlength\abovedisplayshortskip{8pt}
  \setlength\belowdisplayshortskip{8pt}
  }
\setlist{nolistsep}
\newcommand\eqnitem[1][]{%
  \ifx\relax#1\relax  \item \else \item[#1] \fi
  \abovedisplayskip=0pt\abovedisplayshortskip=0pt~\vspace*{-\baselineskip}}
\newtheoremstyle{plain}{3mm}{3mm}{\slshape}{}{\bfseries}{.}{.5em}{}
\newtheoremstyle{definition}{2mm}{2mm}{}{}{\bfseries}{.}{.5em}{}
\theoremstyle{plain}
\newtheorem{theorem}{Theorem}
\newtheorem{lemma}[theorem]{Lemma}
\newtheorem{corollary}[theorem]{Corollary}
\newtheorem{claim}[theorem]{Claim}
\theoremstyle{definition}
\newtheorem{definition}[theorem]{Definition}
\newtheorem{remark}[theorem]{Remark}
\theoremstyle{plain}
\newcounter{MainTheoremCounter}
\newtheorem{Maintheorem}[MainTheoremCounter]{Theorem}
\theoremstyle{plain}
\newtheorem*{namedthm}{\namedthmname}
\newcounter{namedthm}
	\newenvironment{named}[2]
	{\def\namedthmname{#1}
	\refstepcounter{namedthm}
	\namedthm[#2]\def\@currentlabel{#1}}
	{\endnamedthm}
\numberwithin{equation}{section}
\definecolor{Color2}{rgb}{0.78, 0.11, 0.0}
\titlespacing*{\section}{0pt}{3.5ex plus 0ex minus 0ex}{1.5ex plus 0ex}
\titlespacing*{\subsection}{0pt}{3.5ex plus 0ex minus 0ex}{1.5ex plus 0ex}
\titlespacing*{\subsubsection}{0pt}{3.5ex plus 0ex minus 0ex}{1.5ex plus 0ex}
\newcommand{\supp}{{\normalfont\text{supp}}\,}
\newcommand{\eps}{\epsilon}
\newcommand{\N}{\mathbb{N}}
\newcommand{\Z}{\mathbb{Z}}
\newcommand{\R}{\mathbb{R}}
\newcommand{\Nz}{\N_0}
\newcommand{\defeq}{\vcentcolon=}
\newcommand\restr[2]{{ \left.\kern-\nulldelimiterspace #1 \right|_{#2}}}
\renewcommand{\epsilon}{\varepsilon}
\renewcommand{\leq}{\leqslant}
\renewcommand{\geq}{\geqslant}
\renewcommand{\setminus}{\backslash}
\newcommand{\al}{\alpha}
\newcommand{\ogamma}{\overline{\gamma}}
\newcommand{\gone}{\gamma_0}
\newcommand{\gtwo}{\gamma_1}
\newcommand{\gthree}{\gamma_2}
\newcommand{\gfour}{\gamma_3}
\newcommand{\gfive}{\gamma_4}
\newcommand{\NC}{\mathcal{N}}
\newcommand{\QC}{\mathcal{Q}}
\newcommand{\HC}{\mathcal{H}}
\newcommand{\QCt}{\widetilde{\mathcal{Q}}}
\newcommand{\AC}{\mathcal{A}}
\newcommand{\JC}{\mathcal{J}}
\newcommand{\CC}{\mathcal{C}}
\newcommand{\FC}{\mathcal{F}}
\newcommand{\dimh}{\dim_{\text{H}}}
\newcommand{\dimm}{\dim_{\text{M}}}
\newcommand{\udimm}{\overline{\dim}_{\text{M}}\hspace*{.1em}}
\newcommand{\ldimm}{\underline{\dim}_{\text{M}}\hspace*{.1em}}
\newcommand{\Leb}{\textnormal{Leb}}
\renewcommand{\Phi}{\mathfrak{R}}
\renewcommand{\Psi}{\mathfrak{L}}
\newcommand{\inftycontent}{>0}
\begin{document}

\title{\bfseries A combinatorial proof of a sumset conjecture of Furstenberg}
\author{Daniel Glasscock \and Joel Moreira \and Florian K.\ Richter}

\date{\small \today}
\maketitle

\begin{abstract}
We give a new proof of a sumset conjecture of Furstenberg that was first proved by Hochman and Shmerkin in 2012: if $\log r / \log s$ is irrational and $X$ and $Y$ are $\times r$- and $\times s$-invariant subsets of $[0,1]$, respectively, then $\dimh (X+Y) = \min ( 1, \dimh X + \dimh Y)$. Our main result yields information on the size of the sumset $\lambda X + \eta Y$ uniformly across a compact set of parameters at fixed scales.  The proof is combinatorial and avoids the machinery of local entropy averages and CP-processes, relying instead on a quantitative, discrete Marstrand projection theorem and a subtree regularity theorem that may be of independent interest.
\end{abstract}

\small
\tableofcontents
\thispagestyle{empty}
\normalsize


\section{Introduction}

Given $r\in\N$, a set $X \subseteq [0,1]$ is \emph{$\times r$-invariant} if it is closed and $T_r X \subseteq X$, where $T_r\colon[0,1]\to[0,1]$ is the map $x \mapsto rx \pmod 1$.  In the late 1960's, Furstenberg conjectured\footnote{To the authors' knowledge, the sumset conjecture \eqref{eqn_furstenberg_sumset_conjecture} does not appear in print but was known to have originated with Furstenberg. The intersection conjecture \eqref{eqn_furstenberg_intersection_conjecture}, on the other hand, is one of several conjectures stated in  \cite{furstenbergtransversality}.} that if $r$ and $s$ are multiplicatively independent positive integers (that is, $\log r / \log s$ is irrational) and $X$ and $Y$ are $\times r$- and $\times s$-invariant, respectively, then
\begin{align}
\label{eqn_furstenberg_sumset_conjecture}
    \dimh \big(X+Y \big) &= \min \big(1, \dimh X + \dimh Y \big), \text{ and} \\
\label{eqn_furstenberg_intersection_conjecture}
    \dimh \big(X \cap Y \big) &\leq \max \big(0, \dimh X + \dimh Y - 1 \big). 
\end{align}
The sumset conjecture \eqref{eqn_furstenberg_sumset_conjecture} was resolved by Hochman and Shmerkin \cite{localentropy}, who proved a more general result concerning the dimension of sums of invariant measures.  It also follows by more recent results of Shmerkin \cite{shmerkin} and Wu \cite{wu}, who independently resolved a generalization of the intersection conjecture \eqref{eqn_furstenberg_intersection_conjecture}. We give a more detailed account of this recent history later in the introduction.

The purpose of this article is to give a new, combinatorial proof of Furstenberg's sumset conjecture \eqref{eqn_furstenberg_sumset_conjecture}.  Denoting the unlimited $\gamma$-Hausdorff content by $\HC_{\inftycontent}^\gamma$ (see \cref{def_unlmt_hausdorff_content}), our main theorem is as follows.

\begin{Maintheorem}
\label{main_real_theorem_intro}
Let $r$ and $s$ be multiplicatively independent positive integers, and let $X, Y \subseteq [0,1]$ be $\times r$- and $\times s$-invariant sets, respectively. Define $\ogamma = \min \big ( \dimh X + \dimh Y, 1 \big)$. For all compact $I \subseteq \R \setminus \{0\}$ and all $\gamma < \ogamma$,
\begin{align}
\label{eqn_intro_result}
    \inf_{\lambda, \eta \in I}\ \HC_{\inftycontent}^{\gamma} \big( \lambda X + \eta Y \big) > 0.
\end{align}
\end{Maintheorem}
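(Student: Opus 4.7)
The plan is to interpret $\lambda X + \eta Y$ as (up to rescaling) the projection of the product set $X \times Y \subseteq [0,1]^2$ in direction $(\lambda, \eta)$, and to adapt a Marstrand-type projection argument to tree encodings of $X$ and $Y$ in such a way that the bounds are quantitative and uniform across $I \times I$. Marstrand's projection theorem would give $\dimh \pi_{(\lambda,\eta)}(X \times Y) = \ogamma$ for Lebesgue-a.e.\ $(\lambda,\eta)$, but \eqref{eqn_intro_result} demands the bound for \emph{every} $(\lambda,\eta)$; the role of multiplicative independence of $r$ and $s$ is to eliminate the exceptional directions.

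First, I would encode $X$ as a subtree $T_X \subseteq \{0,\ldots,r-1\}^{*}$ whose level-$n$ nodes index base-$r$ cells intersecting $X$, and likewise $T_Y$ over the $s$-ary alphabet; the $\times r$- and $\times s$-invariance translate to shift-closure properties of these trees. I would then invoke the subtree regularity theorem to pass to refined subtrees $\widetilde T_X, \widetilde T_Y$ with (essentially) uniform branching profiles and level growth rates arbitrarily close to $\dimh X, \dimh Y$. These regular subtrees carry natural product measures $\mu_X \otimes \mu_Y$ behaving like approximate $\ogamma$-dimensional Frostman measures on $X \times Y$, which will be the vehicle for the content bound at the end.

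Next, I would select a sequence of coupled scales $\delta_k \to 0$ with $\delta_k \asymp r^{-n_k} \asymp s^{-m_k}$; by irrationality of $\log r/\log s$, such $(n_k, m_k)$ are abundant and even quantitatively distributed. At each $\delta_k$, the discrete Marstrand projection theorem applied to the regularized product yields, for every direction outside an exceptional set $E_k \subseteq I\times I$ of small Lebesgue measure, a lower bound of order $\delta_k^{-\gamma}$ on the $\delta_k$-covering number of the projected set. To eliminate $E_k$: I would use $\times r$- and $\times s$-invariance to relate covering numbers at direction $(\lambda,\eta)$ and scale $\delta$ to covering numbers at direction $(\lambda r^{a}, \eta s^{b})$ at a rescaled scale; multiplicative independence then ensures that the log-orbit $\{a\log r - b\log s\}_{a,b \in \Z}$ is dense and equidistributed mod $1$. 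Pigeonholing the small measure of $E_k$ against this orbit shows that every $(\lambda,\eta) \in I \times I$ is good at some comparable scale, and a standard mass distribution argument applied to $\mu_X \otimes \mu_Y$ converts this into \eqref{eqn_intro_result}.

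\textbf{The main obstacle} is the direction-removal step: the exceptional sets $E_k$ at different scales must interact compatibly with the equidistributed orbit of rescaled directions, so that no $(\lambda,\eta) \in I \times I$ falls into $E_k$ at every candidate scale. This requires a discrete Marstrand theorem strong enough to quantify $|E_k|$ with an error rate coordinated against the equidistribution speed of $\{a \log r - b \log s\}$ mod $1$, and a subtree regularity theorem flexible enough to produce product measures whose Frostman constants are likewise uniform in scale. The combination of these two ingredients, along with the careful matching of scales and directions, is where the technical weight of the proof will lie.
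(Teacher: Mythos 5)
Your outline assembles the right ingredients in the right order — oblique projections of the product set, a discrete Marstrand theorem, a subtree regularity theorem, equidistribution of the $\log r / \log s$ rotation, and the mass distribution principle — but the final two steps contain a genuine gap. Applying mass distribution to the fixed product measure $\mu_X \otimes \mu_Y$ bounds the content of $X \times Y$; to bound the content of the projection you would need the \emph{pushforward} $(\Pi_{e^t})_*(\mu_X\otimes\mu_Y)$ to satisfy a Frostman condition, and a fixed measure will not do so for every direction $t$ (this is precisely the almost-everywhere statement of Marstrand). Your pigeonholing step produces, for each direction, only ``some comparable scale'' at which the projected covering number is large; that controls a single scale and yields at best a lower Minkowski estimate for the projected set. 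A Hausdorff content (or Hausdorff dimension) lower bound requires simultaneous control at \emph{all} scales --- compare $\{1/n : n \in \N\}$, whose covering number at every scale $\delta$ is $\asymp\delta^{-1/2}$ yet whose Hausdorff dimension is $0$.

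What closes the gap in the paper is making the final tree, and hence the measure, depend on $t$. After fixing $t$, the regularized product tree $\Gamma'$ is thinned a second time: at each level $n$ where the rotated parameter $t + R^n(0)$ lies in the Marstrand good set $T$ and the node has $\geq\rho^{-\gthree}$ children, a subset of $\rho^{-\gtwo}$ children is kept whose $\Pi_{e^{t+R^n(0)}}$-images are $c_3\rho$-separated; at all other levels a single child is kept. The equidistribution lemma (\cref{cor:higherunifdistupdated}), combined with the \emph{topological} control on $I_\beta\setminus T$ supplied by \cref{thm:discretemarstrandcorollary}\ref{marstrand_corollary_conclusion_one} (a bounded number of intervals, not merely small measure), shows good levels occur with density $\geq 1-\eps/3$, so the $t$-dependent subtree $\Gamma''$ retains $(r^{m\gtwo},1-\eps/2)$-fertile ancestry. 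The uniform-splitting measure $\nu$ on $\Gamma''$ --- not $\mu_X\otimes\mu_Y$ --- is the one whose pushforward is Frostman with a loss $\rho^{-N_0}$ uniform in $N$, and that is what mass distribution is applied to. Two further adjustments worth noting: the regularity theorem is applied once to the single tree built from the coupled-scale products $\QC_{nm}=X_{nm}\times Y_{(nm)'}$ rather than to $T_X$ and $T_Y$ separately, since separate regularization does not transfer cleanly across the mismatched scales $r^{-n}$ and $s^{-n'}$; and the regularity theorem delivers $(c,\omega)$-fertile ancestry (most ancestors have many children), which is weaker than the ``uniform branching profiles'' you assume but is exactly what the measure construction needs.
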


Beyond implying \eqref{eqn_furstenberg_sumset_conjecture}, \cref{main_real_theorem_intro} gives finer quantitative information on the size of the sumset $\lambda X + \eta Y$ in terms of the unlimited $\gamma$-Hausdorff content uniformly over the parameters $\lambda$ and $\eta$.  
The uniformity in the result, which does not appear to follow from \cite{localentropy}, has found use in recent applications concerning digit problems; see, for example, \cite{GMR_add_trans_2020} and \cite{yu_digit_expansions_arxiv}.
See \cref{remark_quantifyingA} below for some further discussion on this uniformity.

Our proof of \eqref{eqn_furstenberg_sumset_conjecture} differs from other proofs in the literature in that it completely avoids the machinery of CP-processes and local entropy averages.  Instead, it features an elementary, combinatorial approach that builds on the work of Peres and Shmerkin in~\cite{peresshmerkin2009}.  Important ingredients in the proof include a quantitative discrete Marstrand theorem (\cref{thm:discretemarstrandbase}) and a subtree regularity theorem (\cref{tree_thinning_theorem}), both of which may be of independent interest.

\subsection{History and context}

In a highly influential work in geometric measure theory, Marstrand \cite{Marstrand_1954} related the Hausdorff dimension of a Borel set $E \subseteq \R^2$, $\dimh E$, to the Hausdorff dimension of its images under orthogonal projections and its intersections with lines.  More specifically, he showed that for almost every line $L \subseteq \R^2$, $\dimh( \pi_L E) = \min \big(1,\dimh E \big)$, where $\pi_L$ is the orthogonal projection $\R^2 \to L$, and that for almost every line $L$ intersecting $E$, $\dimh( E \cap L) = \max \big(0,\dimh E - 1 \big)$.\footnote{Marstrand considered sets of positive, finite Hausdorff measure.  His results imply the ones stated here when combined with the fact that any Borel set of Hausdorff dimension $\gamma > 0$ can be approximated from above and below by sets with positive, finite $(\gamma \pm \eps)$-Hausdorff measure.  For a more modern take on these theorems, see Corollary 9.4 and Theorem 10.10 in \cite{mattila_geometry_of_sets_1995}.}

Images of a Cartesian product $X \times Y$ under orthogonal projections are, up to affine transformations which preserve dimension, sumsets of the form $\lambda X +  \eta Y$, while intersections of $X \times Y$ with lines are affinely equivalent to sets of the form $\lambda X \cap (\eta Y + \sigma)$. Thus, Marstrand's theorems in the case $E=X\times Y$ imply the following.

\begin{theorem}[{\cite[Theorems II and III]{Marstrand_1954}}]
\label{theorem_marstrand}
Let $X$ and $Y$ be Borel subsets of $[0,1]$. For Lebesgue-a.e. $\lambda, \eta, \sigma \in \R$,
\begin{align}
    \label{eqn_theorem_marstrand}
    \dimh\big( \lambda X +  \eta Y \big) &= \min{}\big(\dimh (X \times Y) , \ 1 \big), \text{ and}\\
    \label{marstrand_slicing_result_intro}
    \dimh\big( \lambda X \cap (\eta Y + \sigma) \big) &\leq \max{}\big(0, \ \dimh (X \times Y) - 1 \big).
\end{align}
\end{theorem}

Improving \eqref{eqn_theorem_marstrand} and \eqref{marstrand_slicing_result_intro} by replacing the Lebesgue-typical projection or intersection of $X\times Y$ with a concrete projection or intersection is not possible in general \cite{Mattila_kaufman_example_1975} but can be done in special cases when the sets $X$ and $Y$ are structured.  Furstenberg's conjectures \eqref{eqn_furstenberg_sumset_conjecture} and \eqref{eqn_furstenberg_intersection_conjecture} can be contextualized as such: when $r$ and $s$ are multiplicatively independent and $ X \times Y$ is the product of a $\times r$- and a $\times s$-invariant set, results for the Lebesgue-typical projection and intersection should hold for the orthogonal projection to, and the intersection with, the line $x=y$.  These conjectures join a host of results and conjectures by Furstenberg and others that aim to capture the independence between base-$r$ and base-$s$ structure when $r$ and $s$ are multiplicatively independent.

Conjectures \eqref{eqn_furstenberg_sumset_conjecture} and \eqref{eqn_furstenberg_intersection_conjecture} were recently resolved, both proven in more general forms.  In the following theorem, we have combined special cases of the results by Hochman and Shmerkin \cite{localentropy}, Shmerkin \cite{shmerkin}, and Wu \cite{wu} that are most relevant to this work. Note that $\udimm$ denotes the upper Minkowski dimension (see \cref{def_metric_entropy_and_box_dims}).

\begin{theorem}[{\cite{localentropy} and \cite{shmerkin,wu}}]
\label{thm_HS_localentropy_and_intersections}
Let $r$ and $s$ be multiplicatively independent positive integers, and let $X, Y \subseteq [0,1]$ be $\times r$- and $\times s$-invariant sets, respectively. For all $\lambda, \eta \in \R \setminus \{0\}$ and all $\sigma \in \R$,
\begin{align}
\label{eqn_in_thm_HS_localentropy}
     \dimh\big( \lambda X +  \eta Y \big) &= \min{}\big(\dimh X + \dimh Y, \ 1 \big), \text{ and} \\
\label{eqn_furstenberg_intersection}
     \udimm\big( \lambda X \cap  (\eta Y + \sigma) \big) &\leq \max{} \big(0, \ \dimh X + \dimh Y - 1 \big).
\end{align}
\end{theorem}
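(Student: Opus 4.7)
The plan is to translate the theorem into a combinatorial statement about $r$-adic and $s$-adic discretizations of $X$ and $Y$ at common scales, and then to combine the discrete Marstrand theorem (\cref{thm:discretemarstrandbase}) with the subtree regularity theorem (\cref{tree_thinning_theorem}) to produce a Hausdorff-content lower bound that is uniform in $(\lambda,\eta)\in I$. First, I would encode $X$ by the $r$-adic tree $T_X$ of $r$-adic intervals meeting $X$, and $Y$ by the analogous $s$-adic tree $T_Y$. The $\times r$-invariance of $X$ yields a strong self-similarity: for every node $v\in T_X$, rescaling the subtree rooted at $v$ back to $[0,1]$ yields a subtree of $T_X$ itself. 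This property is what will later let me promote a single-scale covering bound into the desired Hausdorff-content bound.

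Fix $\gamma<\ogamma$ and the compact $I\subseteq\R\setminus\{0\}$. Multiplicative independence of $r$ and $s$ makes $\{n\log r - m\log s : n,m\in\N\}$ dense in $\R$, so for arbitrarily large $N$ there exist $n,m$ comparable to $N$ and a common scale $\delta$ with both $r^{-n}$ and $s^{-m}$ comparable to $\delta$. From the definition of Hausdorff dimension I can find cylinders of $X$ and $Y$ whose level-$n$ and level-$m$ discretizations have cardinalities at least $\delta^{-\alpha}$ and $\delta^{-\beta}$, for $\alpha,\beta$ arbitrarily close to $\dimh X$ and $\dimh Y$. Invoking \cref{tree_thinning_theorem}, I would replace these by regular subtrees $A\subseteq X$ and $B\subseteq Y$ whose branching is nearly uniform across many consecutive scales; such regularity is precisely the hypothesis required to apply the discrete Marstrand theorem.

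The discrete Marstrand theorem (\cref{thm:discretemarstrandbase}) then says that, outside a small exceptional set of directions $E_\delta$, the sumset $\lambda A + \eta B$ requires at least $\delta^{-\gamma}$ many $\delta$-intervals to cover. To extend this from generic $(\lambda,\eta)$ to every $(\lambda,\eta)\in I$, I would exploit the $\times r$ and $\times s$ invariance: when $(\lambda,\eta)$ lies in $E_\delta$, descending into suitable cylinders and rescaling produces a new regularized pair $A',B'$, still inside $X$ and $Y$, for which the effective direction at scale $\delta$ is perturbed while $(\lambda,\eta)$ itself remains fixed in $I$. Because many such rescalings are available from the self-similarity, choosing among them avoids $E_\delta$.

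Finally, to upgrade the scale-$\delta$ covering bound to a Hausdorff-content bound, I would iterate using the tree self-similarity: inside each $r$-adic by $s$-adic box in $X\times Y$ at matched scales, the sumset problem reduces, up to an affine change of coordinates, to the same problem for sub-invariant sets at a smaller common scale, with $(\lambda,\eta)$ replaced by a mildly perturbed pair that stays in a slightly enlarged compact subset of $\R\setminus\{0\}$. A mass-distribution argument built from the resulting multi-scale covering bounds then converts them into the stated lower bound on $\HC_{\infty}^\gamma(\lambda X+\eta Y)$, uniformly over $(\lambda,\eta)\in I$. I expect the main obstacle to be the third step: quantitatively controlling $E_\delta$ well enough that, combined with a finite cover of $I$ and the self-similar rescalings of $X$ and $Y$, one obtains a lower bound uniform over the compact set $I$ rather than merely pointwise, and matching the perturbations introduced by iteration with the compactness of $I$ so that the recursion can proceed indefinitely without leaving a controlled range.
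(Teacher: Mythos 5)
Your proposal addresses only the first clause, \eqref{eqn_in_thm_HS_localentropy}, and says nothing about the slicing bound \eqref{eqn_furstenberg_intersection}. This is a genuine and substantial gap. The two clauses are not equivalent: uniform slicing bounds imply projection bounds, but the converse implication is not known, and the paper's own combinatorial machinery (\cref{thm:discretemarstrandbase} and \cref{tree_thinning_theorem}) delivers only the sumset clause. The intersection bound is due to Shmerkin, via an inverse theorem for the $L^q$ flattening of self-similar measures, and to Wu, via CP processes combined with Sinai's factor theorem --- precisely the kind of machinery this paper explicitly avoids. Nothing in your outline engages with the intersection inequality, and the discrete Marstrand theorem you invoke gives lower bounds on projections, not upper bounds on slices.

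For the sumset clause, your sketch does roughly track the paper's proof of \cref{main_real_theorem_intro}, but several details are off. The paper builds a single tree $\Gamma$ whose level-$n$ nodes are the points of $X_{nm}\times Y_{(nm)'}$, rather than separate $r$-adic and $s$-adic trees for $X$ and $Y$; \cref{tree_thinning_theorem} is applied to this product tree and produces a fertile-ancestry property, not ``near-uniform branching'' --- and the $(\rho,\gamma)_c$ non-concentration hypothesis needed for \cref{thm:discretemarstrandbase} comes directly from invariance via \cref{product_of_discrete_invariant_sets_bounds}, not from any thinning. Your ``choose among rescalings to avoid $E_\delta$'' step is, in the paper, an equidistribution argument: descending $m$ levels of $\Gamma$ rotates the effective projection direction by the irrational $\log(r^m/s^{m'})$ modulo $\log s$, and \cref{cor:higherunifdistupdated} shows that a proportion close to $1$ of the levels land in a good set of directions produced in advance by \cref{thm:discretemarstrandcorollary}. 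Finally, note that the theorem as stated is attributed to Hochman--Shmerkin, Shmerkin, and Wu and is not proved in this paper; the paper's contribution is the stronger, uniform statement \cref{main_real_theorem_intro}, which supplies the hard inequality $\geq$ in \eqref{eqn_in_thm_HS_localentropy} (the $\leq$ direction is immediate from Lipschitz continuity of $\Pi_t$ together with \cref{dimension_of_product_of_invariant_sets}).
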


A number of partial results preceded those in \cref{thm_HS_localentropy_and_intersections}, both for multiplicatively invariant sets and for attractors of iterated function systems (IFSs).  Carlos Moreira \cite{moreira_sums_of_regular_cantor_sets} considered sumsets of attractors of IFSs with certain irrationality and non-linearity conditions.  Peres and Shmerkin \cite{peresshmerkin2009} proved \eqref{eqn_in_thm_HS_localentropy} for attractors of IFSs with rationally independent contraction ratios; this resolved \eqref{eqn_in_thm_HS_localentropy} in the special case that $X$ and $Y$ are restricted digit Cantor sets with respect to multiplicatively independent bases. (This work of Peres and Shmerkin is particularly relevant to the arguments in this paper, as we explain in detail in \cref{section_outline_of_proof}.)

Hochman and Shmerkin \cite{localentropy} developed Furstenberg's CP processes \cite{furstenbergtransversality} and introduced local entropy averages to prove \eqref{eqn_in_thm_HS_localentropy} both for invariant sets and measures and for attractors of IFSs satisfying some general minimality conditions.  Wu \cite{wu} combined the CP process machinery with Sinai's factor theorem from ergodic theory to resolve \eqref{eqn_furstenberg_intersection} for invariant sets and attractors of regular, self-similar IFSs.  Shmerkin \cite{shmerkin} resolved \eqref{eqn_furstenberg_intersection} utilizing tools primarily from additive combinatorics, proving an inverse theorem for the decay of $L^q$ norms of certain self-similar measures of dynamical origin. Yu \cite{yu_improvement_to_furstenberg_arxiv} and Austin \cite{austin_proof_of_furstenberg_2020} gave dynamical proofs of \eqref{eqn_furstenberg_intersection_conjecture}, simplifying some aspects of earlier proofs.

The sumset and intersection theorems are closely related: fibers of orthogonal projections are precisely those lines with which intersections are considered.
It is not surprising, then, that the intersection theorem can be used to deduce the sumset theorem.
For example, if for arbitrary sets $X, Y \subseteq [0,1]$ we know that for all $\gamma > \max \big( 0, \dimh X + \dimh Y - 1 \big)$, there exists $\delta_0 > 0$, for all $0 < \delta < \delta_0$, and for all balls $B$ of diameter $\delta$,
\[\NC \big( X \cap (Y + B), \delta \big) \leq \delta^{-\gamma},\]
then we can deduce that $\dimh (X+Y) = \min (1, \dimh (X \times Y) \big)$. 
This type of uniformity is made explicit in Shmerkin \cite{shmerkin} and Yu \cite{yu_improvement_to_furstenberg_arxiv} and may be implicit in the other proofs of the intersection conjecture.  It is possible to deduce \cref{main_real_theorem_intro} from Shmerkin's main result in \cite{shmerkin}; we explain the details in the course of another argument in \cite{GMR_add_trans_2020}.  Despite the fact that every proof of the intersection conjecture can be counted as a proof of the sumset conjecture, we believe our approach still has merit: it is the most elementary proof to date; it exposes uniformity important in certain number-theoretic applications; and it features tools which may be of independent interest.

\cref{main_real_theorem_intro} has a geometric formulation in terms of orthogonal projections; while we will not make particular use of the theorem in this form, it is worth formulating for its historical connection to the topic. Let $\pi_\theta: \R^2 \to \R^2$ be the orthogonal projection onto the line that contains the origin and forms an angle $\theta$ with the positive $x$-axis.  The proof of the equivalence between \cref{main_real_theorem_intro} and \cref{theorem_uniform_hausdorff_dimension_projections_projection_form} is standard and not needed in this work, so it is omitted.

\begin{Maintheorem}
\label{theorem_uniform_hausdorff_dimension_projections_projection_form}
Let $r$ and $s$ be multiplicatively independent positive integers, and let $X, Y \subseteq [0,1]$ be $\times r$- and $\times s$-invariant sets, respectively. Define $\ogamma = \min \big ( \dimh X + \dimh Y, 1 \big)$. For all compact $I \subseteq (0,\pi) \setminus \{\pi / 2\}$ and all $\gamma < \ogamma$, $\inf_{\theta\in I} \HC_{\inftycontent}^{\gamma} \big( \pi_\theta (X \times Y) \big) > 0.$
\end{Maintheorem}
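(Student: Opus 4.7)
The plan is to deduce Theorem B directly from Theorem A by exploiting the standard identification of orthogonal projections of Cartesian products with weighted sumsets. The key observation is that for any $\theta \in (0, \pi)$, the orthogonal projection $\pi_\theta$ onto the line through the origin at angle $\theta$ sends a point $(x,y) \in \R^2$ to the point on that line whose signed distance from the origin equals $x \cos \theta + y \sin \theta$. Composing $\pi_\theta$ with the natural isometric identification of this line with $\R$, I would identify $\pi_\theta(X \times Y)$ with the subset $\cos \theta \cdot X + \sin \theta \cdot Y$ of $\R$. Since this identification is an isometry, and unlimited $\gamma$-Hausdorff content is defined purely in terms of diameters of covering sets, I would conclude that
\[
    \HC_\infty^\gamma \bigl( \pi_\theta(X \times Y) \bigr) \;=\; \HC_\infty^\gamma \bigl( \cos \theta \cdot X + \sin \theta \cdot Y \bigr).
\]

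Next, I would carefully choose the parameter set to which Theorem A is applied. Given compact $I \subseteq (0,\pi) \setminus \{\pi/2\}$, the condition $\theta \in (0,\pi)$ forces $\sin \theta > 0$, and the condition $\theta \ne \pi/2$ forces $\cos \theta \ne 0$. Thus the sets $\{\cos \theta : \theta \in I\}$ and $\{\sin \theta : \theta \in I\}$ are compact subsets of $\R \setminus \{0\}$, and their union $I_0 \defeq \{\cos \theta : \theta \in I\} \cup \{\sin \theta : \theta \in I\}$ is again a compact subset of $\R \setminus \{0\}$.

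Finally, for any fixed $\gamma < \ogamma$, I would apply Theorem A to the compact set $I_0$, obtaining a strictly positive constant
\[
    c \;\defeq\; \inf_{\lambda, \eta \in I_0} \HC_\infty^\gamma \bigl( \lambda X + \eta Y \bigr) \;>\; 0.
\]
For every $\theta \in I$, both $\cos \theta$ and $\sin \theta$ lie in $I_0$, so by the identification of the first paragraph,
\[
    \HC_\infty^\gamma \bigl( \pi_\theta(X \times Y) \bigr) \;=\; \HC_\infty^\gamma \bigl( \cos \theta \cdot X + \sin \theta \cdot Y \bigr) \;\geq\; c.
\]
Taking the infimum over $\theta \in I$ yields $\inf_{\theta \in I} \HC_\infty^\gamma \bigl( \pi_\theta(X \times Y) \bigr) \geq c > 0$, which is exactly the desired conclusion.

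There is no substantial obstacle in this argument: all the genuine work of the paper is packaged inside Theorem A, and the passage to the projection form is just a change of variables $(\lambda, \eta) = (\cos\theta, \sin\theta)$ together with isometry invariance of Hausdorff content. The only point requiring the slightest care is checking that the image parameter set remains bounded away from the origin, which is precisely why the hypothesis $I \subseteq (0, \pi) \setminus \{\pi/2\}$ (rather than $I \subseteq [0, \pi]$) is imposed in the statement.
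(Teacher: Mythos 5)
Your argument is correct and is precisely the ``standard'' deduction the paper refers to when it omits the proof of the equivalence between Theorem A and Theorem B: identify $\pi_\theta(X\times Y)$ isometrically with $\cos\theta\cdot X + \sin\theta\cdot Y$, observe that $\HC_\infty^\gamma$ is isometry-invariant, and apply Theorem~A to the compact set $I_0 \subseteq \R\setminus\{0\}$ of cosines and sines. The only minor point worth noting is that enlarging the parameter set to the union $I_0$ (so the infimum in Theorem~A runs over all pairs, not just pairs of the form $(\cos\theta,\sin\theta)$ with a common $\theta$) is harmless, since an infimum over a larger set only gives a smaller constant, which is still positive.
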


\subsection{Overview of the paper}

The paper is organized as follows. In \cref{sec_2}, we organize the terminology, notation, and basic facts we need from discrete and continuous fractal geometry, including some properties of $\times r$-invariant subsets of $[0,1]$ and an equidistribution lemma. \cref{section_projections_and_discrete_marstrand} contains a proof of \cref{thm:discretemarstrandbase}, our discrete Marstrand projection theorem. \cref{tree_section} features notation and terminology for trees and the subtree regularity theorem, \cref{tree_thinning_theorem}. Finally, we prove \cref{main_real_theorem_intro} in \cref{sec_proof_of_theorem}.

\subsection{Acknowledgements}

The authors extend their thanks to the referees for their valuable feedback.  The effectiveness of the main result was pointed out to us by the referees and led to \cref{remark_quantifyingA}. The third author is supported by the National Science Foundation under grant number DMS~1901453.

\section{Preliminary definitions and results}
\label{sec_2}

The positive and non-negative integers are denoted by $\N$ and $\Nz$, respectively. For $x \in \R$, denote the fractional part by $\{x\}$ and the integer part (or floor) by $\lfloor x \rfloor$.
The Lebesgue measure on the real line is denoted by $\Leb$. Throughout the paper, $\R^d$ is equipped with the Euclidean norm which we denote by $| \cdot |$. Given two positive-valued functions $f$ and $g$, we write $f \ll_{a_1,\ldots,a_k} g$ or $g \gg_{a_1,\ldots,a_k} f$ if there exists a constant $K > 0$, depending only on the quantities $a_1, \ldots, a_k$, for which $f(x) \leq K g(x)$ for all $x$ in the domain common to both $f$ and $g$.  We write $f \asymp_{a_1,\ldots,a_k} g$ if both $f \ll_{a_1,\ldots,a_k} g$ and $f \gg_{a_1,\ldots,a_k} g.$

\subsection{Continuous and discrete fractal geometry}
\label{sec_cont_discr_fractal_geom}

In this section, we lay out the notation, tools, and results we need from continuous and discrete fractal geometry.  A good general reference for the standard material in this section is \cite[Ch. 4]{mattila_geometry_of_sets_1995}.  In the definitions that follow, $\rho, \gamma, c > 0$, $d \in \N$, and $X \subseteq \R^d$ is non-empty. 

\begin{definition} \leavevmode
\label{def_metric_entropy_and_box_dims}
\begin{itemize}
    \item The set $X$ is \emph{$\rho$-separated} if for all distinct $x_1, x_2 \in X$, $|x_1 - x_2| \geq \rho$.
    \item The \emph{metric entropy of $X$ at scale $\rho$} is
\[\NC(X,{\rho}) = \sup \big \{ |X_0| \ \big | \ X_0 \subseteq X \text{ is } \rho \text{-separated} \big \}.\]
    \item The \emph{lower Minkowski dimension} of $X$ is
    \begin{equation}
    \ldimm X = \liminf_{\delta \to 0^+} \frac{\log \NC (X, \delta)}{\log \delta^{-1}}.
    \end{equation}
    The \emph{upper Minkowski dimension}, $\udimm X$, is defined analogously with a limit supremum in place of the limit infimum.  If $\ldimm X = \udimm X$, then this value is the \emph{Minkowski dimension} of $X$, $\dimm X$.
\end{itemize}
\end{definition}

It is a well-known fact which we will use without further mention that if $\rho < 1$, then $\ldimm X = \liminf_{N \to \infty} \log \NC (X, \rho^{N}) \big/ \log \rho^{-N}$ and $\udimm X = \limsup_{N \to \infty} \log \NC (X, \rho^{N}) \big/ \allowbreak \log \rho^{-N}$.

\begin{definition}\leavevmode
\label{def_unlmt_hausdorff_content}
\begin{itemize}
    \item The \emph{unlimited $\gamma$-Hausdorff content}\footnote{The unlimited $\gamma$-Hausdorff content is sometimes denoted in the literature by $\HC_\infty^\gamma$. We choose to use $\HC_{\inftycontent}^\gamma$ because it is more consistent with the notation introduced in \cref{def_rho_gamma_sets}.} of $X$ is
    \[\HC_{\inftycontent}^\gamma (X) = \inf \left\{ \sum_{i \in I} \delta_i^\gamma \ \middle| \ X \subseteq \bigcup_{i \in I} B_i, \ B_i \text{ open ball of diameter } \delta_i \right\}. \]
    Note that when $X$ is compact, the index set $I$ may be taken to be finite.
    \item The \emph{Hausdorff dimension} of $X$ is
    \begin{align*}
        \dimh X &= \sup \{ \gamma \in \R \ | \ \HC_{\inftycontent}^\gamma (X) > 0 \}\\
        &=\inf \{ \gamma \in \R \ | \ \HC_{\inftycontent}^\gamma (X) = 0 \}.
    \end{align*}
\end{itemize}
\end{definition}

In the following definition, we introduce two notions meant to capture the dimensionality of discrete sets.

\begin{definition}\leavevmode \label{def_rho_gamma_sets}
\begin{itemize}
    \item (cf. \cite[Definition 1.2]{katztao2001}) The set $X$ is a \emph{$(\rho,\gamma)_c$-set} if it is $\rho$-separated and for all $\delta \geq \rho$ and all open balls $B$ of diameter $\delta$,
\begin{align}\label{inequality_def_delta_alpha_set}
    \big| X \cap B \big| \leq c \left( \frac{\delta}{\rho} \right)^\gamma.
\end{align}
    \item The \emph{discrete Hausdorff content of $X$ at scale $\rho$ and dimension $\gamma$} is
    \[\HC_{\geq \rho}^\gamma (X) = \inf \left\{ \sum_{i\in I} \delta_i^\gamma \ \middle| \ X \subseteq \bigcup_{i\in I} B_i, \ B_i \text{ open ball of diameter } \delta_i \geq \rho \right\}.\]
    Note that when $X$ is compact, the index set $I$ may be taken to be finite.
\end{itemize}
\end{definition}

In the definition of a $(\rho,\gamma)_c$-set, we think of $\rho$ as being positive and close to $0$, $\gamma \in [0,d]$ as the ``dimension'' of the set, and $c > 0$ as an uninteresting parameter that exists only to make our arguments explicit. The inequality in \eqref{inequality_def_delta_alpha_set} guarantees that the points of a $(\rho,\gamma)_c$-set cannot be too concentrated in any ball. It follows from that inequality that the maximum cardinality of a $(\rho,\gamma)_c$ set in $[0,1]^d$ is on the order of $\rho^{-\gamma}$. A $(\rho,\gamma)_c$-set with cardinality $\gg \rho^{-\gamma}$ can be thought of as a discrete approximation to a set with Hausdorff dimension $\gamma$; this is made more precise in \cref{remark_on_rho_gamma_c_set_metric_entropy} below and is realized in \cref{discrete_invariant_set_bounds}. In fact, if the discrete approximations of a set $X \subseteq \R^d$ at all scales $\rho > 0$ are $(\rho,\gamma)_c$-sets, then the \emph{Assouad dimension} (cf. \cite[Section 2.1]{fraser_2020}) of the set $X$ is at most $\gamma$.  More precisely, the Assouad dimension of $X$ is the infimum of the set of $\gamma$'s for which there there exists $c > 0$ such that for all $\rho > 0$, the set $X$ rounded to the lattice $\rho \Z^d$ is a $(\rho,\gamma)_c$-set.

The discrete Hausdorff content at scale $\rho$ is a ``$\rho$-resolution'' analogue of the unlimited Hausdorff content.  The discrete Hausdorff contents of two sets that look the same at scale $\rho$ are approximately equal.  The following lemma provides a connection between the discrete and the continuous regimes that will be useful in the proof of \cref{main_real_theorem_intro}.

\begin{lemma}
\label{lemma_connection_between_discrete_and_unlimited_H_contents}
Let $X \subseteq \R^d$ be compact. For all $\gamma \geq 0$,
\begin{align}\label{equality_connection_between_discrete_and_unlimited_H_contents}
    \lim_{\rho \to 0^+} \HC_{\geq \rho}^\gamma (X) = \HC_{\inftycontent}^\gamma(X).
\end{align}
Consequently, if $\lim_{\rho \to 0} \HC_{\geq \rho}^\gamma (X)  > 0$, then $\dimh X \geq \gamma$.
\end{lemma}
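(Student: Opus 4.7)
The plan is to establish \eqref{equality_connection_between_discrete_and_unlimited_H_contents} by proving the two obvious inequalities, and then to read off the ``consequently'' clause from the definition of Hausdorff dimension. The entire argument rests on two simple observations: shrinking $\rho$ only enlarges the family of covers admissible for $\HC_{\geq \rho}^\gamma$, and the compactness of $X$ lets us approximate $\HC_\infty^\gamma(X)$ by \emph{finite} covers, whose ball diameters then automatically have a strictly positive minimum.

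For the easy direction, I would first observe that every cover of $X$ by open balls of diameters $\geq \rho$ is \emph{a fortiori} a cover of $X$ by open balls of arbitrary positive diameter, so $\HC_\infty^\gamma(X) \leq \HC_{\geq \rho}^\gamma(X)$ for every $\rho > 0$. The same comparison of admissible families shows that $\rho \mapsto \HC_{\geq \rho}^\gamma(X)$ is non-decreasing, so the limit as $\rho \to 0^+$ exists in $[0,\infty]$ and satisfies
$$\HC_\infty^\gamma(X) \leq \lim_{\rho \to 0^+} \HC_{\geq \rho}^\gamma(X).$$

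For the reverse inequality, I would fix $\eps > 0$ and invoke the compactness-based remark in \cref{def_unlmt_hausdorff_content} to pick a \emph{finite} cover $\{B_i\}_{i=1}^N$ of $X$ by open balls of diameters $\delta_i > 0$ satisfying $\sum_{i=1}^N \delta_i^\gamma \leq \HC_\infty^\gamma(X) + \eps$. Set $\rho_\eps \defeq \min_{1 \leq i \leq N} \delta_i$, which is strictly positive precisely because the cover is finite. For every $\rho \in (0, \rho_\eps]$, this same cover is admissible for $\HC_{\geq \rho}^\gamma$, whence $\HC_{\geq \rho}^\gamma(X) \leq \HC_\infty^\gamma(X) + \eps$. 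Letting $\rho \to 0^+$ and then $\eps \to 0^+$ closes the gap and yields \eqref{equality_connection_between_discrete_and_unlimited_H_contents}.

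The ``consequently'' clause is then immediate: if $\lim_{\rho \to 0^+} \HC_{\geq \rho}^\gamma(X) > 0$, then \eqref{equality_connection_between_discrete_and_unlimited_H_contents} yields $\HC_\infty^\gamma(X) > 0$, and the supremum characterization of $\dimh$ in \cref{def_unlmt_hausdorff_content} gives $\dimh X \geq \gamma$. The only spot that even demands care is the passage from a finite cover to a positive lower bound on its diameters, which is really the sole content of the lemma; I do not anticipate any genuine obstacle.
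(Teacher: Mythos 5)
Your proposal is correct and follows essentially the same approach as the paper's own (quite terse) proof: monotonicity gives one inequality, and compactness---allowing the approximating cover in the definition of $\HC_\infty^\gamma(X)$ to be taken finite, hence with a strictly positive minimum diameter---gives the other. You have simply spelled out the steps the paper leaves implicit.
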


\begin{proof}
Let $\gamma \geq 0$. The limit in \eqref{equality_connection_between_discrete_and_unlimited_H_contents} exists because the function $\rho \mapsto \HC_{\geq \rho}^\gamma (X)$ is non-increasing as $\rho$ tends to $0^+$ and is bounded from below by $\HC_{\inftycontent}^\gamma(X)$.  Equality in the limit follows from the fact that $X$ is compact, allowing for the index set in the definition of $\HC_{\inftycontent}^\gamma(X)$ to be taken to be finite. If $\lim_{\rho \to 0} \HC_{\geq \rho}^\gamma (X)  > 0$, then $\HC_{\inftycontent}^\gamma(X) > 0$, and it follows from the definition of the Hausdorff dimension that $\dimh X \geq \gamma$.
\end{proof}

\begin{remark}\label{remark_on_rho_gamma_c_set_metric_entropy}
It would be natural to define the \emph{metric entropy at scale $\rho$ and dimension $\gamma$} of the set $X$ as
\[\NC( X,{(\rho,\gamma)_c}) = \sup \big \{ |X_0| \ \big | \ X_0 \subseteq X \text{ is a } (\rho,\gamma)_c\text{-set} \big \}.\]
Using a max flow, min cut argument similar to the one in \cite[Ch. 3]{bishopperesbook}, it can be shown that for $X$ compact,
\begin{align}\label{eqn:covsameasn} \frac{\NC \big(X,{(\rho,\gamma)_c} \big)}{\rho^{-\gamma}} \asymp_{c,d} \HC_{\geq \rho}^\gamma(X).\end{align}
Thus, $(\rho,\gamma)_c$-sets of cardinality $\gg \rho^{-\gamma}$ can be thought of as discrete fractal sets of dimension $\gamma$. We will not need \eqref{eqn:covsameasn}; the interested reader can consult \cite[Prop. A1]{Fassler_Orponen_2014} for some details.
\end{remark}

The following is a discrete version of the well-known mass distribution principle, cf. \cite[Lemma 1.2.8]{bishopperesbook}.

\begin{lemma}\label{lem_consequence_of_discrete_frostman}
Let $\mu$ be a Borel probability measure on $\R^d$, and let $\rho, \kappa > 0$. If for all balls $B$ of diameter $\delta \geq \rho$, $\mu (B) \leq \kappa \delta^\gamma$, then the support $\supp\mu$ of $\mu$ satisfies $\HC_{\geq \rho}^\gamma(\supp \mu) \geq \kappa^{-1}$.
\end{lemma}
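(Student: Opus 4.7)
The plan is to mimic the standard mass distribution principle, using countable subadditivity of $\mu$ applied to an arbitrary admissible cover. The only difference from the classical argument is that the hypothesis $\mu(B) \leq \kappa \delta^\gamma$ is restricted to balls of diameter $\delta \geq \rho$, but this matches exactly the restriction in the definition of $\HC_{\geq \rho}^\gamma$, so no extra work is needed to bridge the discrete and continuous regimes.

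Concretely, I would fix an arbitrary cover $\supp \mu \subseteq \bigcup_{i \in I} B_i$ by open balls $B_i$ of diameter $\delta_i \geq \rho$. Since $\mu$ is a Borel probability measure supported on $\supp \mu$, countable subadditivity yields
\[
1 = \mu(\supp \mu) \leq \sum_{i \in I} \mu(B_i).
\]
Applying the hypothesis to each $B_i$ gives $\mu(B_i) \leq \kappa \delta_i^\gamma$, so
\[
1 \leq \kappa \sum_{i \in I} \delta_i^\gamma, \qquad \text{i.e.,} \qquad \sum_{i \in I} \delta_i^\gamma \geq \kappa^{-1}.
\]
Taking the infimum over all such covers yields $\HC_{\geq \rho}^\gamma(\supp \mu) \geq \kappa^{-1}$, as claimed.

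There is essentially no obstacle here; the argument is a direct two-line computation. The only minor point to watch is that the cover in the definition of $\HC_{\geq \rho}^\gamma$ allows arbitrary (possibly uncountable) index sets, but any cover of a Lindelöf (e.g., separable metric) space by open balls has a countable subcover, so $\mu$-subadditivity applies. Since the lemma does not restrict to compact $X$, I would either invoke this Lindelöf remark explicitly or simply note that one may, without loss of generality, reduce to countable covers before estimating.
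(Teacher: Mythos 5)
Your proof is correct and takes the same approach as the paper's (the paper's own proof is terse, essentially deferring to the standard mass-distribution argument that you spell out). The Lindelöf remark is a reasonable precaution; on $\R^d$ it is automatic, so the reduction to countable covers is immediate.
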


\begin{proof}
Let $\eps > 0$, and let $\{B_i\}_{i \in I}$ be a cover of $\supp \mu$ with ball $B_i$ of diameter $\delta_i \geq \rho$ and with $\sum_{i \in I} \delta_i^\gamma \leq \HC_{\geq \rho}^\gamma(\supp \mu ) + \eps$. Then
  the conclusion follows because $\eps > 0$ was arbitrary.
\end{proof}

Denote by $[X]_\delta$ the closed $\delta$-neighborhood of $X$:
\[[X]_\delta \defeq \big\{z\in[0,1] \ \big| \ \exists x\in X~\text{with}~ |z - x| \leq\epsilon\big\}.\]

\begin{lemma}
\label{lem_box_counting_estimate}
Let $a\geq 1$ and $\rho > 0$. If $X, Y \subseteq \R$ are compact and $X\subseteq [Y]_{a\rho}$, then
\[
\HC_{\geq \rho}^\gamma (X) \, \ll_a\, \HC_{\geq \rho}^\gamma (Y).
\]
\end{lemma}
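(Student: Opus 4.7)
The plan is to take an almost-optimal cover of $Y$ by balls of diameter at least $\rho$ and thicken each ball outward by $a\rho$ to produce an admissible cover of $X$. Because every ball in the original cover already has diameter at least $\rho$, this thickening inflates the diameter by at most a multiplicative factor depending only on $a$, which supplies the implied constant.

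Concretely, I would fix $\eps > 0$ and choose a (finite, since both sets are compact) collection of open balls $\{B_i\}_{i \in I}$ of diameters $\delta_i \geq \rho$ covering $Y$ and satisfying
\[
\sum_{i \in I} \delta_i^\gamma \leq \HC_{\geq \rho}^\gamma(Y) + \eps.
\]
For each $i$, let $B_i^\sharp$ be the open ball concentric with $B_i$ of diameter $\delta_i^\sharp \defeq \delta_i + 2a\rho$. Since $X \subseteq [Y]_{a\rho}$, every $x \in X$ lies within distance $a\rho$ of some $y \in Y$, which in turn lies in some $B_j$; hence $x \in B_j^\sharp$, so $\{B_i^\sharp\}_{i \in I}$ covers $X$.

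The hypothesis $\delta_i \geq \rho$ gives $2a\rho \leq 2a\delta_i$, so $\delta_i^\sharp \leq (1+2a)\delta_i$; in particular $\delta_i^\sharp \geq \rho$, and thus $\{B_i^\sharp\}_{i \in I}$ is admissible for the discrete Hausdorff content of $X$ at scale $\rho$. I would then conclude
\[
\HC_{\geq \rho}^\gamma(X) \leq \sum_{i \in I} (\delta_i^\sharp)^\gamma \leq (1+2a)^\gamma \sum_{i \in I} \delta_i^\gamma \leq (1+2a)^\gamma \big( \HC_{\geq \rho}^\gamma(Y) + \eps \big),
\]
and send $\eps \to 0$ to obtain the bound with implied constant $(1+2a)^\gamma$, which depends only on $a$ (bounding $\gamma$ by the ambient dimension $1$ even makes the constant $1+2a$). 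There is no real obstacle; the lemma simply records that the discrete Hausdorff content is stable under thickening the underlying set at its own resolution scale.
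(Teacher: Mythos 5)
Your proof is correct and is essentially identical to the paper's: both take a near-optimal cover of $Y$ at resolution $\geq \rho$, fatten each ball by $a\rho$ to cover $X$, and use $\delta_i \geq \rho$ to bound the diameter inflation by the factor $1+2a$. The only cosmetic difference is that you take an $\eps$-optimal cover and send $\eps \to 0$, whereas the paper takes a factor-of-$2$ suboptimal cover; both yield an implied constant depending only on $a$ (using $\gamma \leq 1$, as you note).
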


\begin{proof}
Let $\{B_i\}_{i\in I}$ be a collection of open balls covering $Y$ and where $B_i$ has diameter $r_i\geq \rho$ and
$\sum_{i\in I}r_i^\gamma<2\HC_{\geq\rho}^\gamma(Y)$.
Since $X\subseteq [Y]_{a\rho}$, it follows that $X\subseteq \bigcup_{i\in I} [B_i]_{a\rho}$ and $[B_i]_{a\rho}$ is a ball of diameter $r_i+2a\rho\leq (2a+1)r_i$.
Therefore $\HC_{\geq\rho}^\gamma(X)\leq\sum_{i\in I}((2a+1)r_i)^\gamma\leq 2(2a+1)\HC_{\geq\rho}^\gamma(Y)$.
\end{proof}

\subsection{Multiplicatively invariant subsets of the reals and their finite approximations}

In this section, we record some basic facts about multiplicatively invariant subsets of $[0,1]$ and their discrete approximations.

\begin{definition} Let $r \in \N$ and $X \subseteq [0,1]$.
\begin{itemize}
    \item The map $T_r: [0,1] \to [0,1]$ is defined by $T_r x = \{rx\}$, where $\{ \cdot \}$ denotes the fractional part of a real number.
    \item The set $X$ is \emph{$\times r$-invariant} if it is closed and $T_r X \subseteq X$.
\end{itemize}
\end{definition}

The Hausdorff and Minkowski dimensions of a multiplicatively invariant set coincide.  As a consequence of this regularity, the Hausdorff dimension of products of such sets is also well-behaved.  We record these facts here for later use.

\begin{theorem}[{\cite[Proposition III.1]{furstenbergdisjointness}}]
\label{thm:furstenbergdimsofsubshift}
If $X \subseteq [0,1]$ is $\times r$-invariant, then $\dimh X = \dimm X$.
\end{theorem}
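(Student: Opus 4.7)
The plan is to reduce the statement to a submultiplicative counting problem at the level of $r$-adic intervals, and then to build a uniform measure from those intervals whose Frostman-type decay yields the matching lower bound on $\dimh X$ via \cref{lem_consequence_of_discrete_frostman}.

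For each $n\in\N$ I would set $N_n$ to be the number of half-open $r$-adic intervals $[k/r^n,(k+1)/r^n)$ that meet $X$, so that $\NC(X,r^{-n})\asymp N_n$. The key structural step is the submultiplicative bound $N_{n+m}\leq N_n N_m$. To justify it, fix a level-$n$ interval $I$ meeting $X$ and note that $T_r^n$ maps $I$ onto $[0,1]$ in a scale-preserving way, sending the $r^m$ level-$(n+m)$ subintervals of $I$ bijectively onto the level-$m$ intervals of $[0,1]$; since $T_r^n(X\cap I)\subseteq X$, at most $N_m$ of the latter meet $X$, and summing over the $N_n$ choices of $I$ gives the bound. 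Fekete's lemma then guarantees that $\alpha:=\lim_n \log N_n/(n\log r)$ exists and equals $\inf_n \log N_n/(n\log r)$, so $\udimm X=\ldimm X=\alpha$ and, crucially, $N_n\geq r^{n\alpha}$ for every $n$.

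Since $\dimh X\leq \ldimm X$ is automatic, only $\dimh X\geq \alpha$ requires work. I would form the probability measure $\mu_n$ that distributes Lebesgue mass $1/N_n$ uniformly across each of the $N_n$ level-$n$ intervals meeting $X$, take a weak-$*$ subsequential limit $\mu$, and observe that $\supp\mu\subseteq X$ since each $\mu_n$ is supported in the closed $r^{-n}$-neighbourhood of the closed set $X$. To get a Frostman-type bound, for any ball $B$ of diameter $\delta\in(0,1]$ I would choose $n$ with $r^{-n-1}\leq\delta<r^{-n}$, note that $B$ meets at most two level-$n$ intervals, and reuse the scaling argument above: for every $m\geq n$, the level-$m$ intervals inside a fixed level-$n$ interval $I$ meeting $X$ are sent by $T_r^n$ to level-$(m-n)$ intervals meeting $X$, so there are at most $2N_{m-n}$ in total. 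This yields
\[\mu_m(B)\leq \frac{2 N_{m-n}}{N_m}\leq \frac{2}{N_n}\leq 2\,r^{\alpha}\,\delta^{\alpha}.\]
Weak-$*$ lower semicontinuity on open balls transfers the bound to $\mu$, and then \cref{lem_consequence_of_discrete_frostman} combined with \cref{lemma_connection_between_discrete_and_unlimited_H_contents} gives $\HC_\infty^\alpha(X)>0$, hence $\dimh X\geq\alpha$.

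The main obstacle is isolating the submultiplicativity $N_{n+m}\leq N_n N_m$ cleanly from $T_r$-invariance, as this is the only place where the hypothesis is used; once it is in hand, the measure construction, the two-interval covering argument for $B$, and the appeal to the mass distribution principle are routine. A minor technicality is verifying $\supp\mu\subseteq X$ via the $r^{-n}$-neighbourhood bound, which is straightforward since $x\notin X$ implies $B(x,\varepsilon)\cap [X]_{r^{-m}}=\emptyset$ for all $m$ sufficiently large.
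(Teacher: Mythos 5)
The paper does not actually prove this theorem; it is quoted from Furstenberg's disjointness paper, where the argument passes to the associated symbolic system and uses the variational principle for topological entropy together with the Shannon--McMillan--Breiman theorem.

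Your argument has a concrete gap in the Frostman estimate. The submultiplicativity $N_{n+m}\leq N_n N_m$ and its Fekete consequences ($\alpha=\inf_n \log N_n/(n\log r)$ and $N_n\geq r^{n\alpha}$ for every $n$) are correct, and so is the bound $\mu_m(B)\leq 2N_{m-n}/N_m$. But the next step, $N_{m-n}/N_m\leq 1/N_n$, is equivalent to $N_nN_{m-n}\leq N_m$, which is precisely the \emph{reverse} of the submultiplicativity you just established; what you actually know is $N_{m-n}/N_m\geq 1/N_n$. The underlying geometric issue is that the uniform-on-level-$m$ measure can concentrate on a ``rich'' level-$n$ interval: inside a fixed level-$n$ interval, the number of level-$m$ descendants meeting $X$ is bounded above only by $N_{m-n}$, whereas the \emph{average} over level-$n$ intervals is $N_m/N_n\leq N_{m-n}$. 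For a general $\times r$-invariant set the ratio $N_n/r^{n\alpha}$ can grow subexponentially (unlike, say, sofic shifts, where $N_n\asymp r^{n\alpha}$), so nothing prevents a single rich level-$n$ interval from carrying $\mu_m$-mass of order $N_{m-n}/N_m\gg r^{-n\alpha}$ for arbitrarily large $m$, which destroys the Frostman bound you need on the weak-$*$ limit. Repairing this requires replacing the uniform measure with one adapted to the tree structure: either an invariant measure of near-maximal entropy whose local dimension is controlled by Shannon--McMillan--Breiman (Furstenberg's route), or a purely combinatorial regularization that first prunes the tree of $r$-adic intervals to a subtree with controlled branching and only then distributes mass by equal splitting --- this is exactly the role played by \cref{tree_thinning_theorem} in the present paper. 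As written, the middle inequality in your display is unjustified and the argument does not close.
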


\begin{lemma}
\label{dimension_of_product_of_invariant_sets}
If $X, Y \subseteq [0,1]$ are $\times r, \times s$-invariant, respectively, then $\dimh (X \times Y) = \dimh X + \dimh Y$.
\end{lemma}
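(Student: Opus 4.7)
The plan is to combine two classical ingredients: the standard product inequalities for Hausdorff dimension from geometric measure theory, and the regularity statement in \cref{thm:furstenbergdimsofsubshift} that Hausdorff and Minkowski dimensions coincide for $\times r$-invariant subsets of $[0,1]$.

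First I would recall the general product inequalities, valid for arbitrary non-empty bounded sets $X, Y \subseteq \R$:
\[
\dimh X + \dimh Y \ \leq \ \dimh(X \times Y) \ \leq \ \dimh X + \udimm Y.
\]
The lower bound is the classical Marstrand product inequality and can be established via a Frostman-type argument: given a Frostman measure of exponent $\gamma_1 < \dimh X$ on $X$ and one of exponent $\gamma_2 < \dimh Y$ on $Y$, the product measure is Frostman of exponent $\gamma_1 + \gamma_2$ on $X \times Y$. The upper bound is equally standard and proceeds by covering: if $\{B_i\}$ is an efficient $\delta$-cover of $X$ witnessing $\HC^{\gamma_1}_{\infty}(X)$ and $Y$ can be covered by at most $\delta^{-\gamma_2 - \eps}$ balls of diameter $\delta$ for all small $\delta$ (which is the content of $\udimm Y \leq \gamma_2$), then products of these covers yield an efficient cover of $X \times Y$ witnessing $\dimh(X \times Y) \leq \gamma_1 + \gamma_2 + \eps$. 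Both of these are proved in standard references such as Mattila \cite[Ch. 8]{mattila_geometry_of_sets_1995}, so I would simply cite them rather than reprove them.

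Next I would invoke \cref{thm:furstenbergdimsofsubshift} applied to $Y$: since $Y$ is $\times s$-invariant, $\udimm Y = \dimh Y$. Plugging this into the upper product inequality gives
\[
\dimh X + \dimh Y \ \leq \ \dimh(X \times Y) \ \leq \ \dimh X + \dimh Y,
\]
and the desired equality follows immediately.

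There is essentially no obstacle here beyond bookkeeping; the only point requiring care is that the upper product inequality demands upper Minkowski regularity of one factor, not both, and it is precisely this asymmetric statement that makes \cref{thm:furstenbergdimsofsubshift} the right input. If one instead wanted to invoke only Hausdorff dimension on the right-hand side, no unconditional inequality is available, so the use of $\udimm Y$ and the subsequent application of Furstenberg's regularity is the crux of the argument.
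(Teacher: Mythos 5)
Your proposal is correct and follows essentially the same route as the paper, which simply cites Mattila's Corollary 8.11 (the two-sided product dimension inequality) together with the Furstenberg regularity fact $\dimh = \udimm$ for multiplicatively invariant sets. The only cosmetic difference is that you apply the regularity to $Y$ whereas the paper applies it to $X$; either choice works since the upper bound $\dimh(X \times Y) \leq \dimh X + \udimm Y$ only needs Minkowski regularity of one factor.
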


\begin{proof}
This follows immediately from \cite[Corollary 8.11]{mattila_geometry_of_sets_1995} and the fact that $\dimh X = \udimm X$.
\end{proof}

Since we will work almost exclusively with finite approximations to multiplicatively invariant sets, we establish some useful notation.

\begin{definition}\label{def_discrete_approx}
Let $X \subseteq [0,1]$ be $\times r$-invariant.  For $n \in \Nz$, the set $X_n$ denotes the set $X$ rounded down to the lattice $r^{-n}\Z$. That is, the point $i / r^n$ is an element of $X_n$ if and only if $X \cap [i/r^n, (i+1) / r^n)$ is non-empty.
\end{definition}

The next results show that finite approximations to a multiplicatively invariant set are multiplicatively invariant and are discrete models of fractal sets as captured by \cref{def_rho_gamma_sets}.

\begin{lemma}
\label{mult_invariance_of_approximations}
Let $X \subseteq [0,1]$ be $\times r$-invariant.  For all $n \in \N$, $T_r X_n \subseteq X_{n-1}$.
\end{lemma}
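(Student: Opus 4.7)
The plan is to unwind the definitions. Given $i/r^n \in X_n$, I would produce a witness $x \in X \cap [i/r^n, (i+1)/r^n)$ and then show that $T_r x \in X$ witnesses $T_r(i/r^n) \in X_{n-1}$ by explicit computation of fractional parts.

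More concretely, write $i = q r^{n-1} + j$ with $0 \leq j < r^{n-1}$, so that $i/r^{n-1} = q + j/r^{n-1}$. Multiplying the interval containing $x$ by $r$ gives $rx \in [q + j/r^{n-1},\, q + (j+1)/r^{n-1})$. Since this interval has length $r^{-(n-1)} \leq 1$ and sits inside a single unit interval $[q, q+1)$, applying $\{\cdot\}$ is just a translation by $-q$, so
\[
T_r x \;=\; \{rx\} \;\in\; \big[j/r^{n-1},\, (j+1)/r^{n-1}\big).
\]
By the $\times r$-invariance of $X$, $T_r x \in X$, which shows $X \cap [j/r^{n-1}, (j+1)/r^{n-1}) \neq \varnothing$ and hence $j/r^{n-1} \in X_{n-1}$. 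The same translation computation gives $T_r(i/r^n) = \{i/r^{n-1}\} = j/r^{n-1}$, so $T_r(i/r^n) \in X_{n-1}$, as required.

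There is really no conceptual obstacle here; the lemma is essentially a bookkeeping statement that the $r$-adic rounding in \cref{def_discrete_approx} intertwines with $T_r$. The only mild subtlety is the boundary case $i = r^n$ (which would force $x = 1$); under the convention that the intervals $[i/r^n,(i+1)/r^n)$ are taken for $0 \le i < r^n$, the point $x = 1$ contributes to $X_n$ only via $i = 0$ (since $\{1\} = 0$ and $1 \in X$ forces $0 \in X$ by invariance), and the argument above goes through unchanged.
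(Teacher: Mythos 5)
Your proof is correct and follows essentially the same route as the paper's: both write $i = q r^{n-1} + j$ (the paper uses $i_0$ and $d_{n-1}$ for your $j$ and $q$), take a witness $x \in X \cap [i/r^n, (i+1)/r^n)$, and observe that $T_r x \in X \cap [j/r^{n-1}, (j+1)/r^{n-1})$. The paper simply takes $i \in \{0,\ldots,r^n-1\}$ from the outset, so your closing remark about the boundary case $i = r^n$ is a reasonable but unneeded addition.
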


\begin{proof}
Let $n \in \N$, and let $i/r^n \in X_n$ with $i \in \{0,\ldots, r^n - 1\}$. Write $i = i_0 + d_{n-1} r^{n-1}$ with $i_0 \in \{0, \ldots, r^{n-1}-1\}$ and $d_{n-1} \in \{0, \ldots, r-1\}$. Note that $T_r (i / r^n) = i_0 / r^{n-1}$ and $T_r ((i+1) / r^n) = (i_0+1) / r^{n-1}$.  We must show that $i_0 / r^{n-1} \in X_{n-1}$.

Since $i/r^n \in X_n$, there exists $x \in X \cap [i/r^n, (i+1)/r^n)$. Since $T_r x \in X$, $T_r x \in X \cap [i_0/r^{n-1}, (i_0+1) / r^{n-1})$. It follows by the definition of $X_{n-1}$ that $i_0/r^{n-1} \in X_{n-1}$, as was to be shown.
\end{proof}

\begin{lemma}
\label{discrete_invariant_set_bounds}
Let $r \geq 2$, and let $X \subseteq [0,1]$ be a $\times r$-invariant set.  For all $\gamma> \dimh X$, there exists $c > 0$ such that for all sufficiently large $N \in \N$, the set $X_N$ is a $(r^{-N},\gamma)_c$-set.
\end{lemma}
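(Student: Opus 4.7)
The plan is to derive both conclusions from two inputs: Furstenberg's theorem $\dimh X = \dimm X$ (\cref{thm:furstenbergdimsofsubshift}), and the $\times r$-self-similarity encoded in \cref{mult_invariance_of_approximations}.

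First I would relate $|X_N|$ to $\NC(X, r^{-N})$. From each non-empty interval $[i/r^N, (i+1)/r^N) \cap X$ pick a representative in $X$, order these representatives by position, and keep every other one; this yields an $r^{-N}$-separated subset of $X$ of size at least $|X_N|/2$, so $\NC(X, r^{-N}) \geq |X_N|/2$. Conversely, any $r^{-N}$-separated subset of $X$ has at most two points in each such interval, so $|X_N| \geq \NC(X, r^{-N})/2$. Combined with \cref{thm:furstenbergdimsofsubshift}, for any sufficiently small $\eta > 0$ and all large enough $N$,
\[
  r^{N(\dimh X - \eta)} \leq \NC(X, r^{-N}) \leq r^{N(\dimh X + \eta)}.
\]
Choosing $\eta$ so that $\gamma_4 < \dimh X - \eta < \dimh X + \eta < \gamma_5$ and enlarging $N$ further to absorb the factors of $2$ yields $r^{N\gamma_4} \leq |X_N| \leq r^{N\gamma_5}$.

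Next I would verify the $(r^{-N}, \gamma_5)_c$-set condition. Separation at scale $r^{-N}$ is immediate from the definition of $X_N$. Given $\delta \geq r^{-N}$ and an open ball $B$ of diameter $\delta$, set $k = \lfloor \log_r (1/\delta) \rfloor$, so that $r^{-k-1} \leq \delta \leq r^{-k}$ and $0 \leq k \leq N$. Then $B$ meets at most two of the half-open $r$-adic intervals $I = [j/r^k, (j+1)/r^k)$. For each such $I$, any $i/r^N \in X_N \cap I$ can be written $i = j r^{N-k} + i'$ with $i' \in \{0, \ldots, r^{N-k}-1\}$, and $T_r^k(i/r^N) = i'/r^{N-k}$, so $T_r^k$ is injective on $X_N \cap I$. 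Iterating \cref{mult_invariance_of_approximations} gives $T_r^k X_N \subseteq X_{N-k}$, and hence $|X_N \cap I| \leq |X_{N-k}|$, producing $|X_N \cap B| \leq 2 |X_{N-k}|$.

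To close the argument I would bound $|X_{N-k}|$ by splitting on $N-k$. If $N - k$ exceeds the threshold from the cardinality bound established in step one, then $|X_{N-k}| \leq r^{(N-k)\gamma_5}$; combined with $r^{-k} \leq r\delta$ this gives $|X_N \cap B| \leq 2 r^{\gamma_5}(\delta r^N)^{\gamma_5}$. Otherwise $N - k$ is bounded by a fixed constant $N_1$, so $|X_{N-k}| \leq r^{N_1}$, and since $(\delta r^N)^{\gamma_5} \geq 1$ we obtain $|X_N \cap B| \leq 2 r^{N_1}(\delta r^N)^{\gamma_5}$. Taking $c = 2 \max(r^{\gamma_5}, r^{N_1})$ completes the proof. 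The only mildly subtle point is bookkeeping: juggling the multiplicative constants from the entropy comparison and from the two regimes of $N-k$, which is handled cleanly by allowing $\gamma_5$ to sit strictly above $\dimh X$ and absorbing all remaining constants into $c$.
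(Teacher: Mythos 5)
Your proof is correct and follows essentially the same approach as the paper: relate $|X_N|$ to metric entropy and invoke \cref{thm:furstenbergdimsofsubshift} for the cardinality bounds, then use the iterated invariance from \cref{mult_invariance_of_approximations} to bound $|X_N \cap I| \leq |X_{N-k}|$ for $r$-adic intervals $I$ and hence $|X_N\cap B|\leq 2|X_{N-k}|$ for balls. The only cosmetic difference is constant bookkeeping: the paper extracts from Furstenberg's theorem a single constant $c_0$ with $|X_n|\leq c_0 r^{n\gamma_5}$ valid for \emph{all} $n$ (not just large $n$), which avoids your case split on whether $N-k$ exceeds a threshold $N_1$; your version handles the small-$(N-k)$ regime with the trivial bound $|X_{N-k}|\leq r^{N_1}$ and folds $r^{N_1}$ into $c$, which is equally valid.
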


\begin{proof}
Let $\gamma > \dimh X$.  Because $\gamma > \udimm X$ (cf. \cref{thm:furstenbergdimsofsubshift}), there exists $c_0 > 0$ such that for all $N \in \N$,
\begin{align}\label{mink_upper_bound}
    |X_N| \leq c_0 r^{N \gamma}.
\end{align}
Using the fact that $X$ is $\times r$-invariant, that $T_r^n$ is injective on half-open intervals of length $r^{-n}$, \cref{mult_invariance_of_approximations}, and the bound in (\ref{mink_upper_bound}), for all $0 \leq n \leq N$ and for all $i \in \{0,\ldots, r^n-1\}$,
\begin{align}
\label{eqn_rr001}
\left|X_N \cap \left[\frac i{r^n}, \frac {i+1}{r^n}\right) \right| \leq \big| T_r^n X_N \big|  \leq \big|X_{N-n} \big| \leq c_0 r^{(N-n)\gamma}.
\end{align}

Put $c = 2r^{\gamma}c_0$. To show that $X_N$ is a $(r^{-N},\gamma)_c$-set, let $B \subseteq \R$ be a ball of diameter $\delta \geq r^{-N}$. Put $n = \lfloor - \log_r \delta \rfloor$ so that $r^{-(n+1)} < \delta \leq r^{-n}$, and note that a union of two intervals of length $r^n$ of the form above suffice to cover $B$.  Therefore,
\[\big|X_N \cap B \big| \leq 2c_0r^{(N-n)\gamma} \leq c \left( \frac{\delta}{r^{-N}} \right)^{\gamma},\]
as was to be shown. 
\end{proof}

\begin{lemma}
\label{discrete_invariant_set_lowerupper_bounds}
Let $r \geq 2$, and let $X \subseteq [0,1]$ be non-empty and $\times r$-invariant.  For all $\gamma>\dimh X$ and all sufficiently large $N \in \N$,
\[
r^{N\dimh X}\leq |X_N|\leq r^{N\gamma}.
\]
\end{lemma}

\begin{proof}
Let $\gamma > \dimh X$.  Because $\gamma > \udimm X$ (cf. \cref{thm:furstenbergdimsofsubshift}), we have that $|X_N| \leq r^{N\gamma}$ for all but finitely many $N\in\N$. It remains to show the lower bound.

Let $M,N\in\N$. Since $\big[\frac i{r^{N}}, \frac {i+1}{r^{N}}\big)$, $i=0,1,\ldots,r^N-1$, forms a partition of $[0,1)$, we have 
\begin{align*}
\big|X_{N+M} \big|=\sum_{i=0}^{r^N-1}
\left|X_{N+M} \cap \left[\frac i{r^{N}}, \frac {i+1}{r^{N}}\right) \right|.
\end{align*}
Note that $X_{N+M} \cap \big[\frac i{r^{N}}, \frac {i+1}{r^{N}}\big)$ is non-empty if and only if $X \cap \big[\frac i{r^{N}}, \frac {i+1}{r^{N}}\big)$ is non-empty, which happens exactly when ${i}/{r^N}\in X_N$. Hence
\begin{align}
\label{eqn_rr002}
\big|X_{N+M} \big|=\sum_{{i}/{r^N}\in X_N}
\left|X_{N+M} \cap \left[\frac i{r^{N}}, \frac {i+1}{r^{N}}\right) \right|.
\end{align}
It follows from \eqref{eqn_rr001} that $\big|X_{N+M} \cap \big[\frac i{r^{N}}, \frac {i+1}{r^{N}}\big) \big| \leq |X_M|$, which combined with \eqref{eqn_rr002} shows that $|X_{N+M}|\leq |X_N||X_M|$. In view of this sub-additive property, it follows from Fekete's Lemma that the sequence $|X_N|^{1/N}$ converges to its infumum, i.e., 
\[
\lim_{N\to\infty}|X_N|^{1/N}=\inf_{N\in\N}|X_N|^{1/N}. 
\]
It follows from $\dimh X = \dimm X$ that $r^{\dimh X} = \lim_{N\to\infty}|X_N|^{1/N}$.  Therefore, $r^{\dimh X} = \inf_{N\in\N}|X_N|^{1/N}$, and hence $r^{N\dimh X}\leq |X_N|$ for all $N\in\N$, as desired.
\end{proof}

The following notation, borrowed from \cite{peresshmerkin2009}, allows us to easily compare powers of $r$ and powers of $s$.  This is useful when considering the finite approximations to the Cartesian product of a $\times r$- and a $\times s$-invariant set.

\begin{definition}
\label{def_n_prime}
For $n \in \Nz$, we set $n'=\lfloor n\log r/\log s\rfloor$ to be the greatest integer so that $s^{n'} \leq r^{n}$.  (The bases $r$ and $s$ do not appear in this notation but should always be clear from context.)
\end{definition}

Recall from \cref{def_discrete_approx} that $X_N$ is the set $X$ rounded to the lattice $r^{-N}\Z$.  Extending this notation to $Y$, the set $Y_{N}$ is the set $Y$ rounded to the lattice $s^{-N} \Z$.  Since $r^{-N}$ is approximately equal to $s^{-N'}$ (where $N'$ is as defined in \cref{def_n_prime}), the set $Y_{N'}$ is the discrete approximation to $Y$ that is on a scale closest to the scale of $X_N$.  Therefore, the sets $X_N$ and $Y_{N'}$ will always be considered in the same context, as opposed to the sets $X_N$ and $Y_N$.

\begin{corollary}
\label{product_of_discrete_invariant_sets_bounds}
Let $2 \leq r < s$, let $X, Y \subseteq [0,1]$ be non-empty $\times r$- and $\times s$-invariant sets. For all $\xi> \dimh X+\dimh Y$, there exist $c_1, c_2 > 0$ and $M_0\in\N$ such that for all $N \geq M_0$, the sets $X_N \times Y_{N'}$ and $X_N \times Y_{N'+1}$ are $(c_1r^{-N},\xi)_{c_2}$-sets satisfying $r^{N (\dimh X+\dimh Y)} \leq |X_N \times Y_{N'}| \leq r^{N \xi}$ and $r^{N (\dimh X+\dimh Y)} \leq |X_N \times Y_{N'+1}| \leq r^{N \xi}$.
\end{corollary}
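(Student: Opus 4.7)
The plan is to reduce the statement about the product $X_N\times Y_{N'}$ (resp.\ $X_N\times Y_{N'+1}$) to the one-dimensional bounds furnished by \cref{discrete_invariant_set_bounds} applied separately to $X$ and to $Y$. First, since $\dimh(X\times Y)=\dimh X+\dimh Y$ by \cref{dimension_of_product_of_invariant_sets}, I can pick auxiliary exponents $\gamma_4^X<\dimh X<\gamma_5^X$ and $\gamma_4^Y<\dimh Y<\gamma_5^Y$ satisfying $\gamma_4<\gamma_4^X+\gamma_4^Y$ and $\gamma_5^X+\gamma_5^Y<\gamma_5$. \cref{discrete_invariant_set_bounds} then gives a constant $c>0$ such that, for all sufficiently large $N$, the set $X_N$ is a $(r^{-N},\gamma_5^X)_c$-set with $r^{N\gamma_4^X}\leq|X_N|\leq r^{N\gamma_5^X}$, and $Y_M$ is an $(s^{-M},\gamma_5^Y)_c$-set with $s^{M\gamma_4^Y}\leq |Y_M|\leq s^{M\gamma_5^Y}$ for $M\in\{N',N'+1\}$.

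For the cardinality estimates, I would use the defining inequalities $s^{N'}\leq r^N<s^{N'+1}$ of \cref{def_n_prime}. The upper bound $|X_N\times Y_{N'}|\leq r^{N\gamma_5^X}\cdot s^{N'\gamma_5^Y}\leq r^{N(\gamma_5^X+\gamma_5^Y)}\leq r^{N\gamma_5}$ is immediate, while for $Y_{N'+1}$ one picks up a harmless factor of $s^{\gamma_5^Y}$ which is absorbed for large $N$ into the slack between $\gamma_5^X+\gamma_5^Y$ and $\gamma_5$. Similarly, $N'\geq N\log r/\log s-1$ gives $s^{N'\gamma_4^Y}\geq r^{N\gamma_4^Y}s^{-\gamma_4^Y}$, so $|X_N\times Y_M|\geq r^{N(\gamma_4^X+\gamma_4^Y)}s^{-\gamma_4^Y}\geq r^{N\gamma_4}$ for all sufficiently large $N$.

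For the $(c_1 r^{-N},\gamma_5)_{c_2}$-set property, I would take $c_1=1/s$. Because $s^{-N'}\geq r^{-N}$ and $s^{-(N'+1)}\geq r^{-N}/s$, the minimum separation in either product is at least $c_1 r^{-N}$, so the separation requirement holds for both $X_N\times Y_{N'}$ and $X_N\times Y_{N'+1}$. For the concentration estimate, given a ball $B\subseteq\R^2$ of diameter $\delta\geq c_1 r^{-N}$, enclose it in a product of intervals $I_x\times I_y$ of length $\delta$ and bound the cardinality as $|(X_N\times Y_M)\cap B|\leq|X_N\cap I_x|\cdot|Y_M\cap I_y|$. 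For each factor I apply the one-dimensional $(\rho,\gamma_5^\bullet)_c$-set property at its natural scale when $\delta$ exceeds that scale, and use the crude bound $|\cdot|\leq 2$ otherwise. Converting every scale to $r^{-N}$ via $s^{-N'}\asymp s^{-(N'+1)}\asymp r^{-N}$ (with constants depending only on $s$) then yields $|(X_N\times Y_M)\cap B|\leq c_2(\delta/(c_1 r^{-N}))^{\gamma_5^X+\gamma_5^Y}\leq c_2(\delta/(c_1 r^{-N}))^{\gamma_5}$ for a suitable $c_2$, where the last inequality uses $\delta/(c_1 r^{-N})\geq 1$ and $\gamma_5^X+\gamma_5^Y\leq\gamma_5$.

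The substantive point, rather than the difficulty, is the careful bookkeeping around three closely related scales $r^{-N}$, $s^{-N'}$, and $s^{-(N'+1)}$: choosing $c_1=1/s$ is exactly what aligns the separation in both product sets with $r^{-N}$, and absorbing the leftover factors of $s^{\pm\gamma}$ into the choice of $\gamma_4<\gamma_4^X+\gamma_4^Y$ and $\gamma_5^X+\gamma_5^Y<\gamma_5$ is what allows the cardinality bounds to be stated cleanly in powers of $r^N$.
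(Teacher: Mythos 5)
Your plan matches the paper's proof essentially step for step: both split $\gamma_4 < \gamma_4^X+\gamma_4^Y$ and $\gamma_5^X+\gamma_5^Y < \gamma_5$, apply \cref{discrete_invariant_set_bounds} to each factor, take $c_1 = 1/s$ to align the three scales $r^{-N}$, $s^{-N'}$, $s^{-(N'+1)}$, and bound $|(X_N\times Y_M)\cap B|$ by the product of the one-dimensional counts, absorbing stray powers of $s$ into the slack between the exponents and into $c_2$. The only cosmetic difference is that you spell out the crude bound at sub-resolution scales explicitly, whereas the paper writes the product estimate in one chain of inequalities.
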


\begin{proof}
Let $\xi> \dimh X+\dimh Y$. Let $g > \dimh X $ and $h > \dimh Y $ be such that
\[\dimh (X \times Y) < g + h < \xi.\]
Applying \cref{discrete_invariant_set_bounds} and \cref{discrete_invariant_set_lowerupper_bounds}, there exist $c, d > 0$ such that for sufficiently large $N \in \N$, the set $X_N$ is a $(r^{-N},g)_c$-set satisfying $r^{N \dimh X} \leq |X_N| \leq r^{N g}$ and $Y_{N'}$ is a $(s^{-N'},h)_{d}$-set satisfying $s^{N' \dimh Y} \leq |Y_{N'}| \leq s^{N' h}$.
Since $r^{N (\dimh X+\dimh Y)}= r^{N\dimh X}r^{N\dimh Y}\geq r^{N\dimh X}s^{N'\dimh Y}$, $|X_N\times Y_N|\leq |X_N\times Y_{N+1}|$ and $g+h<\xi$,
it follows that for sufficiently large $N \in \N$, one has $r^{N (\dimh X+\dimh Y)} \leq |X_N \times Y_{N'}| \leq r^{N \xi}$ and $r^{N (\dimh X+\dimh Y)} \leq |X_N \times Y_{N'+1}| \leq r^{N \xi}$.

Set $c_1 = s^{-1}$ and $c_2 = s^{g} c d$. Since $s^{N'} < r^N < s^{N'+1}$, the sets $X_N \times Y_{N'}$ and $X_N \times Y_{N'+1}$ are $c_1 r^{-N}$-separated.  Since $X_N$ is a $(r^{-N},g)_c$-set, it is a $(c_1 r^{-N},g)_{s^{g} c}$-set.\footnote{More generally, if $0 < c_1 < 1$, then every $(\delta, \gamma)_c$-set is a $(c_1 \delta, \gamma)_{c_1^{-\gamma}c}$-set.  This is a quick exercise left to the reader.} Let $B \subseteq \R^2$ be a ball of diameter $\delta \geq c_1 r^{-N}$. Note that
\begin{align*}
    \big|(X_N \times Y_{N'}) \cap B \big| &\leq s^{g} c \left( \frac{\delta}{c_1 r^{-N}} \right)^{g} d \left( \frac{\delta}{s^{-N'}} \right)^{h} \\
    &\leq s^{g} c \left( \frac{\delta}{c_1 r^{-N}} \right)^{g} d c_1^{h} \left( \frac{\delta}{c_1 r^{-N}} \right)^{h} \leq c_2 \left( \frac{\delta }{c_1 r^{-N}} \right)^{\xi},
\end{align*}
which shows that the set $X_N \times Y_{N'}$ is a $(c_1r^{-N},\xi)_{c_2}$-set. By a similar calculation,
\begin{align*}
    \big|(X_N \times Y_{N'+1}) \cap B \big| &\leq s^{g} c \left( \frac{\delta}{c_1 r^{-N}} \right)^{g} d \left( \frac{\delta}{s^{-(N'+1)}} \right)^{h} \\
    &\leq s^{g} c \left( \frac{\delta}{c_1 r^{-N}} \right)^{g} d \left( \frac{\delta}{c_1 r^{-N}} \right)^{h} \leq c_2 \left( \frac{\delta }{c_1 r^{-N}} \right)^{\xi},
\end{align*}
which shows that the set $X_N \times Y_{N'+1}$ is a $(c_1r^{-N},\xi)_{c_2}$-set.
\end{proof}

\subsection{A quantitative equidistribution lemma} \label{section_quant_equid}

The main result in this short section, \cref{cor:higherunifdistupdated}, gives a lower bound on the number of visits of an equidistributed sequence to a set as a function only of the measure and topological complexity of the set's complement.  This result is certainly not new; we state it explicitly here for convenience in a way that highlights the uniformity in the quantifiers.

For $U \in \N$, denote by $\mathcal{I}_U$ the collection of those subsets of $[0,1)$ that are a union of no more than $U$ disjoint intervals of the form $[a,b)$.

\begin{lemma}\label{lem:higherunifdistupdated}
For any uniformly distributed sequence $(x_n)_{n \in \Nz} \subseteq [0,1)$, $U \in \N$, and $\eps > 0$, there exists $N_0 \in \N$ such that for all $N \geq N_0$ and all $B \in \mathcal{I}_U$,
\[\frac 1N \big| \{ 0 \leq n \leq N-1 \ | \ x_n \in B \} \big | \leq \Leb(B) + \eps.\]
\end{lemma}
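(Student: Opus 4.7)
The plan is to reduce the uniform statement over the infinite family $\mathcal{I}_U$ to a finite union bound over a discrete grid of intervals, using standard single-interval equidistribution as the input.

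First, I would fix a scale $\delta > 0$ to be chosen later (intuitively, $\delta$ will be taken much smaller than $\eps/U$). Consider the finite family
\[
\mathcal{G}_\delta \defeq \big\{ [k\delta, \ell\delta) \cap [0,1) \ \big| \ 0 \leq k < \ell \leq \lceil 1/\delta \rceil \big\}.
\]
Since $(x_n)$ is uniformly distributed, for each individual $I \in \mathcal{G}_\delta$ the quantity $\tfrac{1}{N}|\{0 \leq n < N : x_n \in I\}|$ converges to $\Leb(I)$. As $\mathcal{G}_\delta$ is finite, there exists $N_0 \in \N$ such that for all $N \geq N_0$ and all $I \in \mathcal{G}_\delta$,
\[
\frac{1}{N}\big|\{0 \leq n < N : x_n \in I\}\big| \leq \Leb(I) + \delta.
\]

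Next, given an arbitrary $B \in \mathcal{I}_U$, write $B = \bigsqcup_{i=1}^{U'} [a_i, b_i)$ with $U' \leq U$ and the intervals pairwise disjoint. For each $i$ I would choose the smallest grid interval from $\mathcal{G}_\delta$ containing $[a_i,b_i)$: take $k_i = \lfloor a_i/\delta \rfloor$ and $\ell_i = \lceil b_i/\delta \rceil$, so that $[a_i, b_i) \subseteq [k_i\delta, \ell_i\delta) \eqdef J_i$ and $\Leb(J_i) \leq \Leb([a_i,b_i)) + 2\delta$. Then $B \subseteq \bigcup_{i=1}^{U'} J_i$, and by a simple union bound
\[
\big|\{0 \leq n < N : x_n \in B\}\big| \leq \sum_{i=1}^{U'} \big|\{0 \leq n < N : x_n \in J_i\}\big| \leq \sum_{i=1}^{U'} N\big(\Leb(J_i) + \delta\big).
\]
Bounding $\Leb(J_i) \leq \Leb([a_i,b_i)) + 2\delta$ and summing over $i \leq U' \leq U$ yields
\[
\frac{1}{N}\big|\{0 \leq n < N : x_n \in B\}\big| \leq \Leb(B) + 3U\delta.
\]

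Finally, choosing $\delta$ at the outset so that $3U\delta \leq \eps$ produces the required $N_0$, which depends only on $U$, $\eps$, and the sequence $(x_n)$, and not on $B$. The content of the argument is the finite bracketing step: the family $\mathcal{I}_U$ has only $2U$ continuous parameters, so any $B \in \mathcal{I}_U$ can be sandwiched from above by a set drawn from a finite sub-family, at the cost of at most $2U\delta$ in Lebesgue measure. I do not foresee any genuine obstacle; the only point to handle carefully is that the grid intervals $J_i$ may no longer be disjoint after the enlargement, which is why I use the union bound rather than inclusion-exclusion.
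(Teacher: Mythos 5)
Your proof is correct.  It does, however, take a genuinely different route from the paper's.  The paper invokes the standard fact that the discrepancy $D_N = \sup_I \big| \tfrac1N |\{n < N : x_n \in I\}| - \Leb(I)\big|$ of a uniformly distributed sequence tends to zero; this single cited result packages the uniformity over all intervals, and then the bound for $B \in \mathcal{I}_U$ follows by a $U$-fold union bound losing $U D_N$.  Your argument avoids the discrepancy theorem entirely: you re-derive the needed uniformity directly from the \emph{pointwise} defining property of uniform distribution, by bracketing an arbitrary $B \in \mathcal{I}_U$ with a union of intervals from the finite grid $\mathcal{G}_\delta$, at a cost of $2U\delta$ in measure.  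The trade-off is that your version is more self-contained (no appeal to $D_N \to 0$, whose proof is itself essentially a bracketing argument) but a few lines longer, while the paper's is shorter because it offloads the uniformity step to a textbook reference.  One minor point you brushed past: your grid intervals $J_i = [k_i\delta, \ell_i\delta)$ may protrude past $1$, but since you have already intersected with $[0,1)$ in defining $\mathcal{G}_\delta$ and all $x_n$ lie in $[0,1)$, this is harmless and the bound $\Leb(J_i) \leq \Leb([a_i,b_i)) + 2\delta$ still holds.
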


\begin{proof}
Let $(x_n)_{n \in \Nz} \subseteq [0,1)$ be uniformly distributed, $U \in \N$, and $\eps > 0$. The discrepancy of $(x_n)_{n=0}^{N-1}$ (cf. \cite[Ch. 2, Def. 1.1]{Kuipers_Niederreiter_UDbook}) is
\[D_N = \sup_{I} \left| \frac{\{0 \leq n \leq N-1 \ | \  x_n \in I \}}{N} - \Leb(I)\right|,\]
where the supremum is taken over all half-open intervals $I$ in $[0,1)$. Because $(x_n)_n$ is uniformly distributed, $D_N \to 0$ as $N \to \infty$ (cf. \cite[Ch. 2, Thm. 1]{Kuipers_Niederreiter_UDbook}).
By the definition of discrepancy, for any half-open interval $I \subseteq [0,1)$,
\[ \frac 1N \big| \{ 0 \leq n \leq N-1 \ | \ x_n \in I \} \big | \leq \Leb(I) + D_N.\]
It follows that for every $B \in \mathcal{I}_U$,
\[ \frac 1N \big| \{ 0 \leq n \leq N-1 \ | \ x_n \in B \} \big | \leq  \Leb(B) + U D_N.\]
Let $N_0 \in \N$ be large enough so that for all $N \geq N_0$, $U D_N \leq \eps$.  The conclusion follows.
\end{proof}

\begin{lemma}\label{cor:higherunifdistupdated}
Let $\beta > 0$. For any uniformly distributed sequence $(x_n)_{n \in \Nz} \subseteq [0,\beta)$ with respect to the Lebesgue measure, $U \in \N$, and $\eps > 0$, there exists $N_0 \in \N$ such that for all $N \geq N_0$ and all $J \subseteq [0,\beta)$ whose complement is covered by a union of no more than $U$ many disjoint, half-open intervals of total Lebesgue measure less than $\eps \beta / 2$,
\[\frac 1N \big| \{ 0 \leq n \leq N-1 \ | \ x_n \in J \} \big | \geq 1-\eps.\]
\end{lemma}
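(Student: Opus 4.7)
The plan is to reduce to \cref{lem:higherunifdistupdated} by rescaling the interval $[0,\beta)$ to $[0,1)$ and passing from $J$ to its complement.

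First, I would define $y_n \defeq x_n/\beta$ for $n\in\Nz$. Since $(x_n)$ is uniformly distributed on $[0,\beta)$ with respect to Lebesgue measure, the sequence $(y_n)$ is uniformly distributed on $[0,1)$. Given $J\subseteq[0,\beta)$ whose complement $B\defeq[0,\beta)\setminus J$ is a union of at most $U$ disjoint half-open intervals of total Lebesgue measure less than $\eps\beta/2$, let $B'\defeq\{b/\beta : b\in B\}\subseteq[0,1)$. Then $B'\in\mathcal{I}_U$ and $\Leb(B')<\eps/2$.

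Next, I would apply \cref{lem:higherunifdistupdated} to the sequence $(y_n)$ with parameter $\eps/2$ in place of $\eps$. This would yield some threshold $N_0\in\N$ such that for every $N\geq N_0$ and every $C\in\mathcal{I}_U$,
\[
\frac 1N\bigl|\{0\leq n\leq N-1\mid y_n\in C\}\bigr|\leq \Leb(C)+\eps/2.
\]
A crucial point here is that $N_0$ depends only on $(y_n)$, $U$, and $\eps$, not on the particular set $C\in\mathcal{I}_U$; this uniformity is what allows a single $N_0$ to work for every admissible $J$.

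Finally, I would apply this bound with $C=B'$. Since $y_n\in B'$ if and only if $x_n\in B$, and $\Leb(B')<\eps/2$,
\[
\frac 1N\bigl|\{0\leq n\leq N-1\mid x_n\in B\}\bigr|\leq \Leb(B')+\eps/2<\eps.
\]
Taking complements within $\{0,\ldots,N-1\}$ gives the desired lower bound for $x_n\in J$. There is essentially no obstacle here beyond bookkeeping; the only mild subtlety is verifying that rescaling sends a uniformly distributed sequence on $[0,\beta)$ to one on $[0,1)$ and preserves membership in $\mathcal{I}_U$, and that the $N_0$ produced by \cref{lem:higherunifdistupdated} is uniform over the family of admissible $J$.
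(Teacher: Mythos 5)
Your proposal is correct and takes essentially the same approach as the paper's proof: rescale the sequence and the complement $B=[0,\beta)\setminus J$ by $1/\beta$ to land in $[0,1)$, apply \cref{lem:higherunifdistupdated} with $\eps/2$ to obtain an $N_0$ uniform over $\mathcal{I}_U$, and pass to complements. The only (shared) cosmetic imprecision is treating the cover of $B$ as $B$ itself when invoking $\mathcal{I}_U$, which is harmless since the count over $B$ is bounded by the count over its cover.
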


\begin{proof}
Let $(x_n)_{n \in \Nz} \subseteq [0,\beta)$ be uniformly distributed, $U \in \N$, and $\eps > 0$. Let $N_0$ be from \cref{lem:higherunifdistupdated} with $(x_n / \beta)_{n \in \Nz}$, $U$, and $\eps / 2$.

Let $N \geq N_0$ and $J \subseteq [0,\beta)$. Put $B = [0,\beta) \setminus J$, and note that by assumption, $B / \beta \in \mathcal{I}_U$ and $\Leb(B / \beta) < \eps / 2$. It follows from \cref{lem:higherunifdistupdated} that
\[\frac 1N \big| \{ 0 \leq n \leq N-1 \ | \ x_n / \beta \in B / \beta \} \big | < \eps.\]
Therefore,
\[\frac 1N \big| \{ 0 \leq n \leq N-1 \ | \ x_n \in J \} \big | \geq 1- \eps,\]
as was to be shown.
\end{proof}

\section{A discrete Marstrand projection theorem}
\label{section_projections_and_discrete_marstrand}

In this section, we prove a discrete analogue of Marstrand's projection theorem from geometric measure theory.  The theorem -- stated for sumsets in the introduction as \cref{theorem_marstrand} -- says that for every Borel set $A \subseteq [0,1]^2$, for Lebesgue-a.e. $\theta \in [0,\pi)$, $\dimh \pi_\theta A = \min(1,\dimh A)$, where $\pi_\theta: \R^2 \to \R^2$ is the orthogonal projection onto $\ell_\theta$, the line that contains the origin and forms an angle $\theta$ with the positive $x$-axis. Marstrand's theorem and its relatives have enjoyed much recent attention: we refer the interested reader to the survey \cite{falconer_projection_survey} and to the end of this section where we put \cref{thm:discretemarstrandbase} into more context.

The key idea behind Marstrand's theorem is that of ``geometric transversality'' and is captured in the following lemma.  The proof follows from a simple geometric argument and is left to the reader.  An immediate consequence of the lemma is that there are not many projections which map two distant points close together.

\begin{lemma}\label{lem:transversality}
For all nonzero $x \in \R^2$ and all $\rho > 0$, the set of angles $\theta \in [0,\pi)$ for which $|\pi_\theta x| \leq \rho$ is contained in at most two balls of diameter $\ll \rho |x|^{-1}$.
\end{lemma}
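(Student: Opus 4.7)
The plan is to reduce the problem to a single trigonometric inequality and then read off the ``at most two intervals, each of small length'' conclusion from the geometry of the function $|\cos|$. First, write $x=|x|(\cos\phi,\sin\phi)$ for some $\phi\in[0,2\pi)$. Since $\pi_\theta$ is orthogonal projection onto the line through the origin in the direction $(\cos\theta,\sin\theta)$, we have $\pi_\theta x=\langle x,(\cos\theta,\sin\theta)\rangle(\cos\theta,\sin\theta)$, and hence
\[
|\pi_\theta x|=|x|\,|\cos(\theta-\phi)|.
\]
Thus the set $E_\rho=\{\theta\in[0,\pi):|\pi_\theta x|\leq\rho\}$ is exactly the set of $\theta\in[0,\pi)$ for which $|\cos(\theta-\phi)|\leq c$, where $c=\rho/|x|$.

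Next I would split into two cases. If $c\geq 1$ then $E_\rho=[0,\pi)$, which is a single interval of diameter $\pi\leq\pi c=\pi\rho/|x|$, so the bound holds trivially. If $c<1$, then on any period of length $\pi$ the set $\{\alpha:|\cos\alpha|\leq c\}$ is the single interval $[\pi/2-\arcsin c,\pi/2+\arcsin c]$ modulo $\pi$. Intersecting with the translated window $[-\phi,\pi-\phi)$ (an interval of length exactly $\pi$) yields either one or two subintervals, and each has length at most $2\arcsin c\leq\pi c=\pi\rho/|x|$ by the standard estimate $\arcsin c\leq(\pi/2)c$ for $c\in[0,1]$. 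Translating back, $E_\rho$ is contained in at most two intervals in $[0,\pi)$, each of diameter $\ll\rho/|x|$, which is the claim.

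There is no real obstacle here; the entire content is the identity $|\pi_\theta x|=|x|\cos(\theta-\phi)|$ together with the elementary fact that the zero set of $\cos$ consists of isolated points at which $\cos$ has nonvanishing derivative. The only thing to be careful about is the modular wrap: the sublevel set on the real line is a single interval per period, but restricting to the window $[0,\pi)$ after a shift by $-\phi$ can split it into two pieces, which is precisely why the statement allows for two balls rather than one.
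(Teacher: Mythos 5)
Your proof is correct. The paper itself leaves this lemma to the reader ("The proof follows from a simple geometric argument and is left to the reader"), so there is no paper proof to compare against; your argument via the identity $|\pi_\theta x|=|x|\,|\cos(\theta-\phi)|$, the per-period description of the sublevel set of $|\cos|$, the bound $2\arcsin c\le\pi c$, and the observation that intersecting with a window of length $\pi$ produces at most two pieces is exactly the standard geometric argument the authors have in mind, correctly handling the degenerate case $\rho\geq|x|$.
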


The results in this section add to a number of other discrete Marstrand-type theorems in the recent literature: \cite[Lemma 5.2]{limamoreira}, \cite[Prop.~3.2]{lima_moreira_comb_proof_of_marstrand}, \cite[Lemma 3.8]{glasscock_marstrand_for_integers}, \cite[Prop.~7]{peresshmerkin2009}, \cite[Prop.~4.10]{Orponen_2015} to name a few.  Let us highlight some distinguishing features of \cref{lem:transversality} and  \cref{thm:discretemarstrandbase} that play an important role in this work.  Analogues of \cref{lem:transversality} more commonly found in the literature, such as the one in \cite[Lemma 3.11]{mattila_geometry_of_sets_1995}, bound the \emph{measure} of the set of projections which map $x$ close to 0. The result in \cref{lem:transversality} uses coverings to capture topological information on the set of projections.  This information is carried into \cref{thm:discretemarstrandbase} and is important in the application to \cref{main_real_theorem_intro}. Another useful feature of \cref{thm:discretemarstrandbase} is the allowance of a subset $A'$ in \eqref{eqn:defofexceptionalset}; this will allow us to treat sets in \cref{main_real_theorem_intro} that exhibit multiplicative invariance without necessarily being self-similar.

\subsection{A discrete projection theorem}

Our discrete analogue of Marstrand's theorem, \cref{thm:discretemarstrandbase}, reaches a conclusion similar to that of Marstrand's by quantifying the size of the set $E$ of exceptional directions, those directions in which the image of the set $A$ is small.  On a first reading, it is safe to think of $\gamma < 1$, $n \approx \rho^{-\gamma}$, $\delta = 1$, and $m \approx \rho^{-(\gamma - \eps)}$.  In this case, the set $A$ is a discrete analogue of a set of Hausdorff dimension $\gamma$ and the set $E$ is the set of exceptional directions in which the set $A$ loses at least a proportion $\rho^{\eps}$ of its points.

\begin{theorem}
\label{thm:discretemarstrandbase}
Let $\gamma, \rho, c > 0$. Put $\overline{\gamma} = \min(\gamma,1)$. If $A \subseteq [0,1]^2$ is a $(\rho,\gamma)_c$-set with $n \defeq |A| > -\log c$, then for all $\delta > 0$ and all $0 \leq m \leq \delta^2 n \big/ 4$, the set
\begin{align}\label{eqn:defofexceptionalset}E = \big\{ \theta \in [0,\pi) \ \big | \ \exists A' \subseteq A, \ |A'| \geq \delta n, \ \NC (\pi_\theta {A'},\rho) \leq m \big \}\end{align}
satisfies
\[\NC ( E, \rho) \ll_{\gamma,c} \rho^{-1} \frac{m}{\delta^2 n} \begin{dcases} n^{1- \overline{\gamma}/\gamma} & \text{ if } \gamma \neq 1 \\ \log n & \text{ if } \gamma = 1 \end{dcases}.\]
\end{theorem}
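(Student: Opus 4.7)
The plan is to adapt the Marstrand--Kaufman energy method to this discrete setting via a combinatorial double counting argument, where the necessary transversality input is supplied by Lemma \ref{lem:transversality}. Fix a maximal $\rho$-separated subset $E_0 \subseteq E$ so that $|E_0| = \NC(E, \rho)$, and count the set of triples
\[
\mathcal{T} \defeq \{(\theta, x, y) \in E_0 \times A \times A : x \neq y, \ |\pi_\theta(x - y)| \leq 2\rho\}.
\]
Producing matching lower and upper bounds for $|\mathcal{T}|$ and solving for $|E_0|$ will yield the claim.

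For the lower bound, fix $\theta \in E_0$ and select the witnessing $A'_\theta \subseteq A$ with $|A'_\theta| \geq \delta n$ and $\NC(\pi_\theta A'_\theta, \rho) \leq m$. A maximal $\rho$-separated subset of $\pi_\theta A'_\theta$ has size at most $m$, and its $\rho$-balls cover $\pi_\theta A'_\theta$, so pulling back partitions $A'_\theta$ into at most $m$ pieces $A^{(i)}_\theta$ inside which any two points project to within $2\rho$. Cauchy--Schwarz gives $\sum_i |A^{(i)}_\theta|^2 \geq (\delta n)^2/m$, and the assumption $m \leq \delta^2 n / 4$ implies $(\delta n)^2/m \geq 4n$, so subtracting the diagonal leaves at least $(\delta n)^2/(2m)$ off-diagonal ordered pairs contributing to $\mathcal{T}$ for each $\theta$. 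Summing over $\theta \in E_0$ gives $|\mathcal{T}| \geq |E_0| \cdot (\delta n)^2/(2m)$.

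For the upper bound, interchange the order of summation. For each distinct $x, y \in A$, Lemma \ref{lem:transversality} applied to $x - y$ with parameter $2\rho$ shows that $\{\theta : |\pi_\theta(x - y)| \leq 2\rho\}$ is contained in at most two balls of diameter $\ll \rho / |x - y|$, and hence contains $\ll 1/|x - y|$ many $\rho$-separated angles (using $|x - y| \leq \sqrt{2}$). Partition pairs dyadically by $|x - y| \in [2^j \rho, 2^{j+1} \rho)$: the $(\rho, \gamma)_c$-property gives $|A \cap B(x, 2^{j+1}\rho)| \leq c \cdot 4^\gamma \cdot 2^{j\gamma}$, which is the binding bound below the crossover $j^* \asymp \gamma^{-1} \log_2(n/c)$ and is beaten by the trivial bound $n$ above it. Summing the geometric series in the three regimes produces
\[
\sum_{x, y \in A,\ x \neq y} \frac{1}{|x - y|} \ll_{\gamma, c} \begin{cases} n / \rho & \text{if } \gamma < 1, \\ n (\log n) / \rho & \text{if } \gamma = 1, \\ n^{(2\gamma - 1)/\gamma} / \rho & \text{if } \gamma > 1. \end{cases}
\]
Comparing with the lower bound on $|\mathcal{T}|$ and dividing by $(\delta n)^2 / m$ yields the claim, using $(2\gamma - 1)/\gamma - 1 = (\gamma - 1)/\gamma = 1 - \ogamma / \gamma$ in the last case and $1 - \ogamma/\gamma = 0$ in the first.

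The main obstacle I anticipate is the bookkeeping at $\gamma = 1$: the naive dyadic sum produces a factor of $\log(1/\rho)$ rather than $\log n$, and one must exploit the crossover truncation at $j^* \asymp \log_2 n$ to replace $\log(1/\rho)$ by $\log n$; the trivial bound $\NC(E, \rho) \ll \rho^{-1}$ then handles the degenerate regime in which $n$ is too small for this substitution to be meaningful, which is the likely role of the mild hypothesis $n > -\log c$. A secondary subtlety is the diagonal correction in the lower bound, where the hypothesis $m \leq \delta^2 n / 4$ is used precisely to guarantee that $\sum_i |A^{(i)}_\theta|(|A^{(i)}_\theta| - 1)$ remains a fixed fraction of the Cauchy--Schwarz quantity $\sum_i |A^{(i)}_\theta|^2$.
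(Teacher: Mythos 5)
Your proof is correct and takes essentially the same approach as the paper's: a double count of the incidence set $\{(\theta,a_1,a_2) : |\pi_\theta(a_1-a_2)|\lesssim\rho\}$ over a maximal $\rho$-separated subset of $E$, with the lower bound from Cauchy--Schwarz on the fibers of $\pi_\theta$ and the upper bound from \cref{lem:transversality} together with a shell decomposition and the $(\rho,\gamma)_c$ condition truncated at the crossover radius. The only differences from the paper are cosmetic (dyadic rather than $e$-based shells; subtracting the diagonal during the lower bound rather than at the end) and do not affect the argument.
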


\begin{proof}
Let $A \subseteq [0,1]^2$ be a $(\rho,\gamma)_c$-set of cardinality $n > - \log c$. Let $\delta > 0$, and let $0 \leq m \leq \delta^2 n \big/ 4$.

Define $S(\theta) = \big\{ (a_1,a_2) \in A^2 \ \big| \ |\pi_\theta (a_1-a_2) | < \rho \big\}$. Let $E'$ be a maximal $\rho$-separated subset of $E$; thus, $|E'| = \NC (E,\rho)$. The goal is to bound $\sum_{\theta \in E'} \big| S(\theta) \big|$ from above and below to get the desired bound on $|E'|$.

Let $\theta \in E'$ and $A'$ be the subset of $A$ corresponding to $\theta$. Since the set $\pi_{\theta}{A'}$ lies on a line and $\NC (\pi_{\theta}{A'},\rho) \leq m$, there exists a collection $\{B\}_{B \in \mathcal{B}}$ of no more than $2m$ closed balls $B$ of diameter $\rho$ whose union covers $\pi_{\theta}{A'}$. By Cauchy-Schwarz,
\begin{align*}
(\delta n)^2  \leq |A'|^2 &\leq \left( \sum_{B \in \mathcal{B}}  \big| \{ a_0 \in A' \ | \ \pi_\theta {a_0} \in B \} \big| \right)^2 \\
&\leq \big| \mathcal{B} \big| \sum_{B \in \mathcal{B}} \big| \{ a_0 \in A' \ | \ \pi_\theta {a_0} \in B \} \big|^2 \\
&\leq 2m \sum_{B \in \mathcal{B}} \big| \{ a \in A \ | \ \pi_\theta a \in B \} \big|^2\\
&= 2m \sum_{B \in \mathcal{B}} \big| \{ (a_1,a_2) \in A^2 \ | \ \pi_\theta {a_1}, \pi_\theta {a_2} \in B \} \big|\\
&\leq 2m \big| S(\theta) \big|.
\end{align*}
It follows that
\begin{align}\label{eqn:lowerboundonenergysum} \frac{\delta^2n^2}{2m}|E'| \leq \sum_{\theta \in E'} \big| S(\theta) \big|. \end{align}

Now we use Lemma \ref{lem:transversality} to bound the right hand side of (\ref{eqn:lowerboundonenergysum}) from above: for $a_1,a_2 \in [0,1]^2$, the set
\[\Theta (a_1,a_2) = \big\{ \theta \in [0,\pi) \ \big| \ \big| \pi_{\theta}{(a_1-a_2)} \big| < \rho \big\}\]
is contained in at most two balls of diameter $\ll \rho / |a_1 - a_2|$. Therefore, $\NC (\Theta(a_1,a_2),\rho) \ll 1 / |a_1 - a_2| $, and using the fact that $E'$ is $\rho$-separated, we see that

\[\sum_{\theta \in E'}1_{S(\theta)}(a_1,a_2) = \sum_{\theta \in E'}1_{\Theta(a_1,a_2)}(\theta)\leq K \frac{1}{|a_1 - a_2|}\]
for some constant $K$ depending on the result in Lemma \ref{lem:transversality}. It follows that
\begin{align*}
\sum_{\theta \in E'} \big| S(\theta) \big| &= \sum_{\theta \in E'} \sum_{a_1,a_2 \in A} 1_{S(\theta)}(a_1,a_2)\\
& = n|E'| + \sum_{\substack{a_1, a_2 \in A \\ a_1 \neq a_2}} \sum_{\theta \in E'}1_{\Theta(a_1,a_2)}(\theta)\\
& \leq n|E'| + K \sum_{\substack{a_1, a_2 \in A \\ a_1 \neq a_2}} |a_1-a_2|^{-1},
\end{align*}
and so we are left to bound the second term from above.

For $\ell \in \Nz$, let $H_\ell = \{x \in \R^2 \ | \ |x| \in [\rho e^\ell, \rho e^{\ell+1}) \}$.  Breaking up the sum $\sum |a_1-a_2|^{-1}$ by fixing $a_1$ and partitioning the $a_2$'s by shells, and using the fact that $A$ is $\rho$-separated, we see
\begin{align*}
\sum_{\substack{a_1, a_2 \in A \\ a_1 \neq a_2}} |a_1-a_2|^{-1} 
&=
\sum_{a_1 \in A} \sum_{\ell = 0}^{\infty} \sum_{a_2 \in A \cap (a_1 + H_\ell)} |a_1-a_2|^{-1}\\
&\leq
\rho^{-1} \sum_{a_1 \in A} \sum_{\ell = 0}^{\infty} e^{-\ell } \big|A \cap (a_1 + H_\ell)\big|.
\end{align*}
Since $A$ is a $(\rho,\gamma)_c$-set, for all $\ell \geq 0$, $\big|A \cap (a_1 + H_\ell)\big| \leq c \left(2\rho e^{\ell+1} \big/ \rho \right)^\gamma$.
On the other hand, $\sum_{\ell=0}^\infty \big|A \cap (a_1 + H_\ell)\big|=|A|-1$.
It follows then from the fact that $\ell\mapsto e^{-\ell}$ is decreasing that $\sum_{\ell = 0}^{\infty} e^{-\ell } \big|A \cap (a_1 + H_\ell)\big|\leq \sum_{\ell = 0}^{\ell_0}2^\gamma ce^{\ell(\gamma-1)+\gamma}$, where $\ell_0 = \lceil \log ((n/c)^{1/\gamma}) \rceil$ is the smallest value such that the set $A$ could be contained in a ball of diameter $\rho e^{\ell_0}$ about $a_1$. Therefore,

\begin{align*}
\rho^{-1} \sum_{a_1 \in A} \sum_{\ell = 0}^{\infty} e^{-\ell } \big|A \cap (a_1 + H_\ell)\big| &\ll_{\gamma,c}
\rho^{-1} \sum_{a_1 \in A} \sum_{\ell = 0}^{\ell_0} \big(e^{\gamma - 1}\big)^\ell
\\
&\ll_{\gamma,c} 
\rho^{-1} n \begin{dcases} n^{1- \overline{\gamma}/\gamma} & \text{ if } \gamma \neq 1 \\ \log n & \text{ if } \gamma = 1 \end{dcases}.
\end{align*}

Combining the upper and lower bounds on $\sum_{\theta \in E'} \big| S(\theta) \big|$, we see that there exists a constant $K$ depending on the result in Lemma \ref{lem:transversality}, $\gamma$, and $c$ such that
\[\frac{\delta^2n^2}{2m}|E'| \leq n|E'| + K \rho^{-1}n \begin{dcases} n^{1- \overline{\gamma}/\gamma} & \text{ if } \gamma \neq 1 \\ \log n & \text{ if } \gamma = 1 \end{dcases}.\]
Dividing both sides by $n$ and using the fact that $m \leq \delta^2n / 4$, we see that
\[ \frac{\delta^2n}{4m}|E'| \leq \left( \frac{\delta^2n}{2m} - 1 \right) |E'| \leq K \rho^{-1} \begin{dcases} n^{1- \overline{\gamma}/\gamma} & \text{ if } \gamma \neq 1 \\ \log n & \text{ if } \gamma = 1 \end{dcases},\]
which rearranges to the desired conclusion.
\end{proof}

\subsection{A corollary for oblique projections}

The proof of \cref{main_real_theorem_intro} will feature oblique projections instead of orthogonal ones. The following corollary concerns oblique projections and is stated in a way that will make it immediately applicable in the proof of \cref{main_real_theorem_intro}.

Denote by $\Pi_{t}: \R^2 \to \R$ the oblique projection $\Pi_{t}(x,y) = x + t y$.  Let $\varphi: (0, \pi / 2) \to \R$ be the diffeomorphism $\varphi(\theta) = \log \tan \theta$.  Note that $\Pi_{e^{\varphi(\theta)}}$ is the oblique projection that is the ``continuation'' of the orthogonal projection $\pi_\theta$, meaning that the points $(x,y)$, $(\Pi_{e^{\varphi(\theta)}}(x,y),0)$, and $\pi_\theta (x,y)$ are collinear.

\begin{corollary}\label{thm:discretemarstrandcorollary}
Let $0 < \gtwo < \gthree < \gfour < \gfive$ be such that $\gtwo < 1$ and
\begin{align}\label{inequality_between_gammas_for_marstrand}
    2 (\gfive - \gthree) < \gfour - \gtwo.
\end{align}
For all compact $I \subseteq \R$, all $\eps, c_1, c_2, c_3 > 0$, all sufficiently small $\rho > 0$ (depending on all previous quantities), and all $(c_1 \rho,\gfive)_{c_2}$-sets $A \subseteq [0,1]^2$ with $|A| \geq \rho^{-\gfour}$, there exists $T \subseteq I$ with the following properties:
\begin{enumerate}[label=(\Roman*)]

\item \label{marstrand_corollary_conclusion_one} the set $I \setminus T$ can be covered by a disjoint union of not more than $\eps \rho^{-1} / 2$-many half-open intervals of length $\rho$, a cover of total Lebesgue measure less than $\eps$.
\item \label{marstrand_corollary_conclusion_three} for all $t \in T$ and all $A' \subseteq A$ with $|A'| \geq \rho^{-\gthree}$, there exists a subset $A'_t \subseteq A'$ with $|A'_t| \geq \rho^{-\gtwo}$ such that the points of $\Pi_{e^t} A'_t$ are distinct and $c_3 \rho$-separated.
\end{enumerate}
\end{corollary}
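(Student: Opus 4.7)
The plan is to deduce the corollary from the discrete Marstrand projection theorem \cref{thm:discretemarstrandbase} and to transfer the conclusion from orthogonal projections $\pi_\theta$ to oblique projections $\Pi_{e^t}$ via the diffeomorphism $\varphi(\theta)=\log\tan\theta$. The identity
\[\big|\pi_\theta(x,y)-\pi_\theta(x',y')\big|=\cos\theta\cdot\big|\Pi_{\tan\theta}(x,y)-\Pi_{\tan\theta}(x',y')\big|\]
shows that a $\kappa\rho$-separated $\pi_\theta$-image lifts to a $(\kappa/\cos\theta)\rho$-separated $\Pi_{e^{\varphi(\theta)}}$-image. Since $\varphi^{-1}(I)\subseteq(0,\pi/2)$ is compact, $\cos\theta$ and $\varphi'(\theta)$ are bounded above and below by positive constants on $\varphi^{-1}(I)$; choosing $\kappa$ a suitable constant multiple of $\max(c_1,c_3)$ allows us to transfer both separation and Lebesgue measure cleanly between $\theta$-space and $t$-space.

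Next, I would apply \cref{thm:discretemarstrandbase} to $A$ with $\gamma=\gamma_5$, $c$ a suitable constant (noting that a $(c_1\rho,\gamma_5)_{c_2}$-set is also a $(\kappa\rho,\gamma_5)_{c'}$-set for a $c'$ depending on $c_1,c_2,\kappa,\gamma_5$), scale $\kappa\rho$, $\delta=\rho^{-\gamma_3}/n$ where $n=|A|$ (so that $\delta n=\rho^{-\gamma_3}$ is exactly the threshold for admissible $A'$), and $m=\lceil\rho^{-\gamma_2}\rceil$. The theorem produces the exceptional set
\[E=\big\{\theta\in[0,\pi)\ \big|\ \exists A'\subseteq A,\ |A'|\geq\rho^{-\gamma_3},\ \NC(\pi_\theta A',\kappa\rho)\leq m\big\}\]
with $\NC(E,\kappa\rho)\ll\rho^{-1+2\gamma_3}\,m\,n\,g(n,\gamma_5)$, where $g(n,\gamma_5)$ equals $1$, $\log n$, or $n^{1-1/\gamma_5}$ according as $\gamma_5<1$, $\gamma_5=1$, or $\gamma_5>1$. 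Covering $E$ by balls of diameter $2\kappa\rho$ and using the bounds $\rho^{-\gamma_4}\leq n\leq C\rho^{-\gamma_5}$ (the upper bound coming from $A\subseteq[0,1]^2$ being a $(\kappa\rho,\gamma_5)_{c'}$-set), the hypothesis $2(\gamma_5-\gamma_3)<\gamma_4-\gamma_2$ serves both to verify admissibility $m\leq\delta^2 n/4$ and to bound the Lebesgue measure of the cover by $\rho^\eta$ for some $\eta>0$, for all sufficiently small $\rho$.

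Finally, I would take $T$ to be $I$ minus the $\varphi$-image of this cover of $E\cap\varphi^{-1}(I)$. Since $\varphi$ has bounded derivative on $\varphi^{-1}(I)$, the image is a finite disjoint union of half-open intervals of total Lebesgue measure less than $\eps$, giving property \textup{(I)}. For $t\in T$, $\varphi^{-1}(t)\notin E$, so every $A'\subseteq A$ with $|A'|\geq\rho^{-\gamma_3}$ satisfies $\NC(\pi_{\varphi^{-1}(t)}A',\kappa\rho)>m\geq\rho^{-\gamma_2}$; a maximal $\kappa\rho$-separated subset of $\pi_{\varphi^{-1}(t)}A'$ lifts to distinct preimages $A'_t\subseteq A'$ with $|A'_t|\geq\rho^{-\gamma_2}$, whose $\Pi_{e^t}$-image inherits $(\kappa/\cos\varphi^{-1}(t))\rho$-separation, hence $\geq c_3\rho$-separation by the choice of $\kappa$; this is property \textup{(II)}. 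The main technical obstacle is checking that the single exponent inequality $2(\gamma_5-\gamma_3)<\gamma_4-\gamma_2$ simultaneously handles admissibility and the Lebesgue decay across the three regimes of $g(n,\gamma_5)$ and uniformly for $n\in[\rho^{-\gamma_4},C\rho^{-\gamma_5}]$, with the tightest case arising from $\gamma_5\geq 1$.
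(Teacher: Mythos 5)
Your approach is essentially the same as the paper's: apply \cref{thm:discretemarstrandbase} to $A$, cover the exceptional set $E$ by $\rho$-balls, transfer from orthogonal to oblique projections via $\varphi(\theta)=\log\tan\theta$. Your normalization $\delta=\rho^{-\gthree}/n$ (so $\delta n=\rho^{-\gthree}$ exactly) is a small, clean simplification of the paper's choice $\delta=\rho^{\sigma}$ with $\sigma\in(\gfive-\gthree,(\gfour-\gtwo)/2)$, and your separation-transfer identity $|\pi_\theta z|=\cos\theta\,|\Pi_{\tan\theta}z|$ is a sharper form of the paper's observation that $\Pi_{e^t}$ only increases distances along $\ell_\theta$.

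Two points that need correcting or completing. First, the parenthetical claim that a $(c_1\rho,\gfive)_{c_2}$-set is also a $(\kappa\rho,\gfive)_{c'}$-set fails when $\kappa>c_1$: passing to a \emph{coarser} separation scale is not automatic, since the set is only guaranteed $c_1\rho$-separated, not $\kappa\rho$-separated. (Only the opposite direction is free, as the paper notes in its footnote.) With $\kappa$ chosen as a multiple of $\max(c_1,c_3)$ you would generally have $\kappa>c_1$, so the application of \cref{thm:discretemarstrandbase} at scale $\kappa\rho$ is unjustified as written. The fix is the paper's: apply the theorem at scale $c_1\rho$, take $m$ to be a suitable multiple of $c_3\rho^{-\gtwo}$ (rather than $\lceil\rho^{-\gtwo}\rceil$), and then recover $c_3\rho$-separation of the oblique image afterward, using that a length-$c_3\rho$ interval contains $\Oh(c_3/c_1)$ many $c_1\rho$-separated points. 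Second, you flag the verification that $2(\gfive-\gthree)<\gfour-\gtwo$ forces the measure of the cover of $E$ to vanish as "the main technical obstacle," especially when $\gfive\geq1$, but you do not carry it out. That verification is precisely where the content of the hypothesis (\ref{inequality_between_gammas_for_marstrand}) enters and is not routine: when $\gfive>1$ the factor $n^{1-\overline{\gfive}/\gfive}=n^{1-1/\gfive}$ is nontrivial, and with $n\in[\rho^{-\gfour},C\rho^{-\gfive}]$ the exponent bookkeeping must be done explicitly. A blind proof needs to actually display the exponent inequality and check that it is implied by (\ref{inequality_between_gammas_for_marstrand}), rather than deferring it as an "obstacle."
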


\begin{proof}
Let $I \subseteq \R$ be compact and $\eps, c_1, c_2, c_3 > 0$. Let $\sigma \in \big(\gfive - \gthree, (\gfour-\gtwo)/2 \big)$. Let $\rho > 0$ be sufficiently small (to be specified later, but depending only on the quantities introduced thus far).  Let $A \subseteq [0,1]^2$ be a $(c_1 \rho,\gfive)_{c_2}$-set with $|A| \geq \rho^{-\gfour}$. Put $\overline{\gfive} = \min(1,\gfive)$, $n = |A|$, $\delta = \rho^\sigma$, and $m = 2c_3 \rho^{-\gtwo}$.  Note that since $A$ is a $(c_1 \rho,\gfive)_{c_2}$-set contained in a ball of diameter $\sqrt{2}$, $n \leq 2 c_2 (c_1 \rho)^{-\gfive}$.

We want to apply \cref{thm:discretemarstrandbase} with $\gfive$ as $\gamma$, $c_1 \rho$ as $\rho$, $c_2$ as $c$, and with $A$, $n$, $\delta$, and $m$ as they are. We see that the inequality $n > - \log c_2$ holds for $\rho$ sufficiently small, as does $m \leq \delta^2 n / 4$ since $\sigma < (\gfour-\gtwo)/2$. Since the conditions of \cref{thm:discretemarstrandbase} hold, the set $E \subseteq [0,\pi)$ defined in (\ref{eqn:defofexceptionalset}) satisfies
\begin{align}\label{corollary_to_marstrand_bound_on_E}
    \begin{aligned}\NC(E,\rho) &\ll_{\gfive, c_2} \rho^{-1} \frac{m}{\delta^2 n} n^{1-\overline{\gfive} / \gfive} \log n \\
    &\ll_{\gfive, c_1, c_2,c_3} \rho^{-1} \frac{\rho^{-\gtwo}}{\rho^{2\sigma} \rho^{-\gfour \overline{\gfive} / \gfive}} \log \left( \rho^{-\gfive} \right).
    \end{aligned}
\end{align}

Let $J = \varphi^{-1}(I)$, and put $T = I \setminus \restr{\varphi}{J}(E)$. Since the map $\restr{\varphi}{J}$ is bi-Lipschitz,
\[\NC(\restr{\varphi}{J}(E), \rho) \asymp_I \NC(E,\rho).\]
Combining this with \eqref{corollary_to_marstrand_bound_on_E} and the fact that $\sigma < (\gfour-\gtwo)/2$, we have that for sufficiently small $\rho$, $\NC(I \setminus T,\rho) \leq \eps \rho^{-1} / 6$. It follows that the set $I \setminus T$ can be covered by a disjoint union of not more than $\eps \rho^{-1} / 2$-many half-open intervals of length $\rho$, a cover of total measure less than $\eps$. This establishes \ref{marstrand_corollary_conclusion_one}.

To prove \ref{marstrand_corollary_conclusion_three}, let $t \in T$, and let $A' \subseteq A$ with $|A'| \geq \rho^{-\gthree}$. Since $n \leq 2 c_2 (c_1 \rho)^{-\gfive}$ and $\sigma > \gfive - \gthree$, for sufficiently small $\rho$, $\rho^{-\gthree} \geq \delta n$.  It follows that $|A'| \geq \delta n$. 
Because $\theta \defeq \varphi^{-1}(t) \not\in E$, $\NC (\pi_\theta {A'},\rho) \geq m$. It follows that $\NC (\pi_\theta {A'},c_3\rho) \geq \rho^{-\gtwo}$. By choosing points in $A'$ in each fiber of a maximally $\rho$-separated set of the projection, we see that there exists a subset $A'_t \subseteq A'$ of cardinality at least $\rho^{-\gtwo}$ such that the orthogonal projection of the points in $A'_\theta$ onto $\ell_\theta$ are disjoint and $c_3\rho$-separated. Since the oblique projection $\Pi_{e^t}$ increases distances between points that lie on $\ell_\theta$, the images of points of $A'_t$ under $\Pi_{e^t}$ are $c_3\rho$-separated.
\end{proof}

\section{Trees and a subtree regularity theorem}
\label{tree_section}

Trees are combinatorial objects that are convenient for describing fractal sets.  We will be concerned solely with finite trees throughout this work.  After giving the main definitions, we motivate their importance by explaining how they will be used in the proof of \cref{main_real_theorem_intro}.  We move then to prove the main result in this section.

\subsection{Preliminary definitions}

The following definitions describe the familiar notion of a rooted tree, a graph with no cycles whose vertices can be arranged on levels and whose edges only connect vertices on adjacent levels.

\begin{definition}\label{def_tree} \leavevmode
\begin{itemize}
    \item A \emph{tree of height $N \in \Nz$} is a finite set of \emph{nodes} $\Gamma$ together with a partition $\Gamma=\Gamma_0\cup\cdots\cup\Gamma_N$ with $|\Gamma_0|=1$ and a \emph{parent} function $P: \Gamma \setminus \Gamma_0 \to \Gamma \setminus \Gamma_N$ such that for every $n\in\{1,\dots,N\}$, $P(\Gamma_n)=\Gamma_{n-1}$.
    \item The nodes in $\Gamma_n$ have \emph{height} $n$. The single node with height $0$ is the \emph{root} and the nodes with height $N$ are called \emph{leaves}.
    \item The node $Q$ is the \emph{parent} of each of its \emph{children}, nodes in the set $C_\Gamma(Q) \defeq P^{-1}(Q)$.
    \item If $Q$ is a node of height $n$, the \emph{induced tree based at $Q$} is the tree $\Gamma_Q \defeq \cup_{i=0}^{N-n} C_\Gamma^{i}(Q)$ of height $N-n$ with root $Q$ and the same parent function as $\Gamma$, restricted to the set $\Gamma_Q$.
    \item A \emph{subtree} of $\Gamma$ is a tree $\Gamma' \subseteq \Gamma$ of the same height as $\Gamma$ with parent function $\restr{P}{\Gamma' \setminus \Gamma_0'}$. (A subtree is uniquely determined by its non-empty set of leaves $\Gamma_N' \subseteq \Gamma_N$.)
\end{itemize}
\end{definition}

Continuing with terminology inspired by genealogy trees, the ancestors of a node $Q$ are those nodes that lie between $Q$ and the root. For the reasons described below in \cref{remark_reason_for_fertile_ancestry}, it will be important to count the number of ancestors of $Q$ that have many children.  To this end, we introduce the following terminology and notation.

\begin{definition} Let $\Gamma$ be a tree, $c > 0$, and $\omega \in [0,1]$.
\begin{itemize}
    \item The \emph{ancestry} of $Q \in \Gamma_n$ is the set
    \[\AC_\Gamma(Q) \defeq \{P^k(Q) \ | \ 1 \leq k \leq n\}.\]
    Note that $|\AC_\Gamma(Q)|$ is equal to the height of $Q$.
    \item The node $Q$ is \emph{$c$-fertile} if $|C_\Gamma(Q)| \geq c$. The set of $c$-fertile ancestors of $Q$ is denoted
    \[\FC_{\Gamma,c}(Q) \defeq \{A \in \AC_\Gamma(Q) \ | \ \text{$A$ is $c$-fertile} \}.\]
    A node $Q$ has \emph{$(c,\omega)$-fertile ancestry} if $|\FC_{\Gamma,c}(Q)| \geq \omega |\AC_\Gamma(Q)|$.
\end{itemize}
\end{definition}

The following definitions allow us to capture the dimension of a finite tree by giving costs to the nodes and measuring the cost of the least expensive cut.

\begin{definition} Let $\Gamma$ be a tree, $r \in \N$, $r \geq 2$, and $\gamma > 0$.
\begin{itemize}
    \item A \emph{cut} of $\Gamma$ is a subset $\CC \subseteq \Gamma$ such that for every leaf $L$ of $\Gamma$, $\big(\{L\} \cup \AC_\Gamma(L) \big) \cap \CC \neq \emptyset$.
    \item The \emph{$\gamma$-Hausdorff content of $\Gamma$ with base $r$} is
    \[\HC_r^\gamma(\Gamma) \defeq \min \left\{ \sum_{Q \in \mathcal{C}} r^{- \text{height}(Q) \gamma} \ \middle | \ \text{$\CC$ is a cut of $\Gamma$} \right\}.\]
\end{itemize}
\end{definition}

The main result in this section, \cref{tree_thinning_theorem}, says, roughly speaking, that any tall enough tree with Hausdorff content bounded from below and with a uniform upper bound on the number of children of any node has a subtree in which most nodes have fertile ancestry.  Before making this statement precise and beginning with the details of the proof, let us make two observations about the concept of fertile ancestry that will help explain why it will be useful later on in the proof of \cref{main_real_theorem_intro}.

\begin{remark} \leavevmode \label{remark_reason_for_fertile_ancestry}
\begin{enumerate}[label=(\Roman*)]
\item \label{reason_for_fertile_ancestry_one} The property of having fertile ancestry is preserved under a type of tree thinning process that we will employ in the proof of \cref{main_real_theorem_intro}.  More specifically, suppose that $\Gamma$ is a tree in which every node has either one child or at least $c$ many children and in which every node has $(c, \omega)$-fertile ancestry. Suppose further that for every node $Q$, there exists a subset $\tilde{C}(Q) \subseteq C_\Gamma (Q)$ of the children of $Q$ with $|\tilde{C}(Q)| \geq \min \big(\tilde c,|C_\Gamma (Q)| \big)$.  These subsets naturally give rise to a subtree $\tilde \Gamma$ obtained by thinning the tree $\Gamma$: the subtree $\tilde \Gamma$ is uniquely defined by the property that if $Q$ is a node of $\tilde \Gamma$, then $C_{\tilde \Gamma}(Q) = \tilde{C}(Q)$. It is not hard to see that every node in $\tilde \Gamma$ has $(\tilde c, \omega)$-fertile ancestry, regardless of how the subsets of children $\tilde{C}(Q)$ were chosen.
\item \label{reason_for_fertile_ancestry_two} A tree in which every node has fertile ancestry necessarily has large Hausdorff content.  This is a simple consequence of the mass distribution principle (or the max flow-min cut theorem) for trees, the real analogue of which is stated in \cref{lem_consequence_of_discrete_frostman}. More specifically, let $\Gamma$ be a tree, and consider a ``flow'' through $\Gamma$ of magnitude 1 starting at the root that splits equally amongst children.  The value of the flow at any node $Q$ with fertile ancestry can be bounded from above using the fact that many times, much of the flow is split amongst a large set of children before reaching $Q$. If all nodes of $\Gamma$ have fertile ancestry, then the flow is not concentrated too highly at any node.  According to the mass distribution principle, the Hausdorff content of a tree that supports such a flow is high.
\end{enumerate}
\end{remark}

\subsection{A subtree regularity theorem}

We now proceed with the main results in this subsection.  In the next two results, fix $r \geq 2$ and $0 < \gthree < \gfour < \gfive$ such that setting
\begin{align*}
    A &\defeq \gfive - \gfour + \log_r 2,\\
    B &\defeq \gfour - \gthree - \log_r 2,
\end{align*}
ensures the quantity $B$ is positive. 
The following lemma describes the fundamental dichotomy behind \cref{tree_thinning_theorem}.

\begin{lemma}\label{lemma_tree_dichotomy_for_capacity}
If $\Gamma$ is a tree with the property that
\begin{align}\label{soft_one_child_policy}
    \text{every node in the tree has at most $r^{\gfive}$ many children,}
\end{align}
then at least one of the following holds:
\begin{enumerate}[label=(\Roman*)]
    \item \label{tree_dichotomy_case_I} there are at least $r^{\gthree}$ many children $Q$ of the root, each of which satisfies
    \[\HC_r^{\gfour}(\Gamma_Q) \geq \HC_r^{\gfour}(\Gamma) r^{-A};\]
    \item \label{tree_dichotomy_case_II} there is at least one child $Q$ of the root satisfying
    \[\HC_r^{\gfour}(\Gamma_Q) \geq \HC_r^{\gfour}(\Gamma) r^{B}.\]
\end{enumerate}
\end{lemma}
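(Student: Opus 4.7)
The plan is to establish a recursive relationship between $\HC_r^{\gfour}(\Gamma)$ and the contents $\HC_r^{\gfour}(\Gamma_Q)$ as $Q$ ranges over the children of the root, and then to derive the dichotomy by assuming both alternatives fail and extracting a contradiction from this recursion. The choices of $A$ and $B$ are engineered precisely so that the contradiction works out.

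First I would establish the key estimate: writing $H \defeq \HC_r^{\gfour}(\Gamma)$ and letting $Q_1, \dots, Q_k$ (with $k \leq r^{\gfive}$ by \eqref{soft_one_child_policy}) denote the children of the root, one has
\[
\sum_{j=1}^{k} \HC_r^{\gfour}(\Gamma_{Q_j}) \;\geq\; H\,r^{\gfour}.
\]
To see this, pick an optimal cut $\CC$ of $\Gamma$. If the root lies in $\CC$, then $H = 1$; combining any cuts $\CC_j$ of the $\Gamma_{Q_j}$ yields a cut $\bigcup_j \CC_j$ of $\Gamma$ whose cost is $r^{-\gfour}\sum_j\mathrm{cost}_{\Gamma_{Q_j}}(\CC_j)$, because heights in $\Gamma$ exceed heights in $\Gamma_{Q_j}$ by exactly $1$; minimizing gives $r^{-\gfour}\sum_j \HC_r^{\gfour}(\Gamma_{Q_j}) \geq 1 = H$. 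If the root is not in $\CC$, then $\CC$ decomposes as $\bigsqcup_j \CC_j$ with each $\CC_j \defeq \CC \cap \Gamma_{Q_j}$ a cut of $\Gamma_{Q_j}$, and the same height shift identity yields $H = r^{-\gfour}\sum_j \mathrm{cost}_{\Gamma_{Q_j}}(\CC_j) \geq r^{-\gfour}\sum_j \HC_r^{\gfour}(\Gamma_{Q_j})$. Either way, rearranging delivers the claimed estimate.

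Next, suppose for contradiction that both \ref{tree_dichotomy_case_I} and \ref{tree_dichotomy_case_II} fail. Let $S$ be the set of children $Q$ of the root with $\HC_r^{\gfour}(\Gamma_Q) \geq H r^{-A}$. The failure of \ref{tree_dichotomy_case_I} gives $|S| < r^{\gthree}$, and the failure of \ref{tree_dichotomy_case_II} gives $\HC_r^{\gfour}(\Gamma_Q) < H r^{B}$ for every child $Q$. Combining with the trivial bound $\HC_r^{\gfour}(\Gamma_Q) < H r^{-A}$ for $Q \notin S$, and using $k \leq r^{\gfive}$,
\[
\sum_{j=1}^{k}\HC_r^{\gfour}(\Gamma_{Q_j}) \;<\; |S|\,H r^{B} + r^{\gfive}\,H r^{-A} \;\leq\; H\bigl(r^{\gthree+B} + r^{\gfive-A}\bigr).
\]

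The final step is a routine verification: plugging in $A = \gfive - \gfour + \log_r 2$ and $B = \gfour - \gthree - \log_r 2$ gives $\gthree + B = \gfive - A = \gfour - \log_r 2$, so $r^{\gthree+B} + r^{\gfive-A} = r^{\gfour}$. This contradicts $\sum_j \HC_r^{\gfour}(\Gamma_{Q_j}) \geq H r^{\gfour}$, completing the proof (modulo the trivial case $H = 0$).

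The only subtlety is handling both possibilities for whether the root belongs to an optimal cut in the key estimate; once that is dispatched, the rest amounts to bookkeeping with the exponents $A$ and $B$.
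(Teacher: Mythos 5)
Your overall strategy is the same as the paper's: establish the inequality $\sum_j \HC_r^{\gfour}(\Gamma_{Q_j}) \geq r^{\gfour}\HC_r^{\gfour}(\Gamma)$ from the structure of cuts, assume both alternatives fail, sum the resulting upper bounds, and reach a contradiction by arithmetic on $A$ and $B$. Your partition of the children into $S$ and its complement is simply a rephrasing of the paper's ordering-and-thresholding of the children by content, and the computation $r^{\gthree+B}+r^{\gfive-A}=r^{\gfour}$ is correct.

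There is, however, a sign error in the second case of your key estimate. When the optimal cut $\CC$ avoids the root, the decomposition $\CC=\bigsqcup_j\CC_j$ together with $\mathrm{cost}_{\Gamma_{Q_j}}(\CC_j)\geq\HC_r^{\gfour}(\Gamma_{Q_j})$ gives $H\geq r^{-\gfour}\sum_j\HC_r^{\gfour}(\Gamma_{Q_j})$, which after rearranging is $\sum_j\HC_r^{\gfour}(\Gamma_{Q_j})\leq Hr^{\gfour}$ --- the reverse of the estimate you assert ``rearranging delivers.'' The fix is immediate, and in fact makes the case split unnecessary: take optimal cuts $\CC_j$ of each $\Gamma_{Q_j}$; their union is a cut of $\Gamma$ whose cost, after the height shift, is $r^{-\gfour}\sum_j\HC_r^{\gfour}(\Gamma_{Q_j})$; since $H$ is the minimum cost over all cuts of $\Gamma$, this yields $H\leq r^{-\gfour}\sum_j\HC_r^{\gfour}(\Gamma_{Q_j})$ directly, without inspecting which nodes happen to lie in some particular optimal cut or invoking $H=1$. (This is precisely the one-line inequality the paper's proof uses.) As a minor aside, the caveat about $H=0$ is vacuous: every cut is non-empty, so $H\geq r^{-N\gfour}>0$ where $N$ is the height of $\Gamma$.
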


\begin{proof}
Let $\Gamma$ be a tree satisfying (\ref{soft_one_child_policy}). Let $Q_1$, $Q_2$, \dots, $Q_I$ be the children of the root of $\Gamma$, ordered so that $\HC_r^{\gfour}(\Gamma_{Q_i}) \geq \HC_r^{\gfour}(\Gamma_{Q_{i+1}})$. If neither \ref{tree_dichotomy_case_I} nor \ref{tree_dichotomy_case_II} holds, then $\HC_r^{\gfour}(\Gamma_{Q_1}) < \HC_r^{\gfour}(\Gamma) r^{B}$ and $\HC_r^{\gfour}(\Gamma_{Q_{\lceil r^{\gthree} \rceil}}) < \HC_r^{\gfour}(\Gamma) r^{-A}$.  It follows by the ordering of the $Q_i$'s and the definition of the Hausdorff content and induced trees that
\begin{align*}
    \HC_r^{\gfour}(\Gamma) &\leq r^{-\gfour} \sum_{i=1}^I \HC_r^{\gfour}(\Gamma_{Q_i}) \\
    &= \sum_{i=1}^{\lfloor r^{\gthree} \rfloor} r^{-\gfour} \HC_r^{\gfour}(\Gamma_{Q_i}) + \sum_{i=\lceil r^{\gthree} \rceil}^{I} r^{-\gfour} \HC_r^{\gfour}(\Gamma_{Q_i})\\
    &< r^{\gthree} r^{-\gfour} \HC_r^{\gfour}(\Gamma) r^{B} + r^{\gfive} r^{-\gfour} \HC_r^{\gfour}(\Gamma) r^{-A} = \HC_r^{\gfour}(\Gamma),
\end{align*}
a contradiction.
\end{proof}

\begin{lemma}\label{pre_tree_thinning_theorem}
Every finite tree $\Gamma$ that satisfies (\ref{soft_one_child_policy}) has a subtree $\Gamma'$ with the property that for all nodes $Q$ in $\Gamma'$,
\begin{align}\label{fertile_ancestry_with_ell_help}
    \big|\FC_{\Gamma',r^{\gthree}}(Q) \big| \geq \frac{|\AC_{\Gamma'}(Q)|B + \log_r \HC_r^{\gfour}(\Gamma)}{A+B}.
\end{align}
\end{lemma}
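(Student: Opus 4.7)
The plan is to construct $\Gamma'$ by a greedy top--down recursion driven by the dichotomy of \cref{lemma_tree_dichotomy_for_capacity}, and then to read off \eqref{fertile_ancestry_with_ell_help} by bookkeeping on the Hausdorff contents of the induced trees $\Gamma_Q$ along each root-to-leaf path.

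First I would build $\Gamma'$ as follows. Retain the root. At every retained node $Q$ that is not a leaf of $\Gamma$, apply \cref{lemma_tree_dichotomy_for_capacity} to the induced tree $\Gamma_Q$, which still satisfies \eqref{soft_one_child_policy}. If case \ref{tree_dichotomy_case_I} holds, retain exactly $\lceil r^{\gthree} \rceil$ children of $Q$ on which $\HC_r^{\gfour}(\Gamma_{Q'}) \geq \HC_r^{\gfour}(\Gamma_Q)\, r^{-A}$; call such a $Q$ a type-$\mathrm{I}$ node. If instead only case \ref{tree_dichotomy_case_II} holds, retain exactly one child $Q'$ with $\HC_r^{\gfour}(\Gamma_{Q'}) \geq \HC_r^{\gfour}(\Gamma_Q)\, r^{B}$; call $Q$ type-$\mathrm{II}$. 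Since in both cases we keep at least one child, the recursion reaches height $N$, so $\Gamma'$ is a genuine subtree of $\Gamma$. Crucially, every type-$\mathrm{I}$ retained node has $\lceil r^{\gthree} \rceil \geq r^{\gthree}$ children in $\Gamma'$ and is therefore $r^{\gthree}$-fertile in $\Gamma'$.

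Next, for each node $Q$ of $\Gamma'$, write $h = |\AC_{\Gamma'}(Q)|$ and let $k(Q)$ denote the number of type-$\mathrm{I}$ ancestors of $Q$ in $\Gamma'$, so that the number of type-$\mathrm{II}$ ancestors equals $h - k(Q)$. A straightforward induction on the height of $Q$ using the content bounds from the two cases gives
\[
\log_r \HC_r^{\gfour}(\Gamma_Q) \;\geq\; \log_r \HC_r^{\gfour}(\Gamma) \;-\; k(Q)\, A \;+\; (h - k(Q))\, B.
\]
On the other hand, the singleton cut $\{\text{root of } \Gamma_Q\}$ shows $\HC_r^{\gfour}(\Gamma_Q) \leq 1$, i.e.\ $\log_r \HC_r^{\gfour}(\Gamma_Q) \leq 0$. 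Combining these two inequalities and solving for $k(Q)$ (using $B > 0$ and hence $A + B > 0$) yields
\[
k(Q) \;\geq\; \frac{h\, B + \log_r \HC_r^{\gfour}(\Gamma)}{A + B}.
\]
Since every type-$\mathrm{I}$ ancestor of $Q$ lies in $\FC_{\Gamma', r^{\gthree}}(Q)$, we have $|\FC_{\Gamma', r^{\gthree}}(Q)| \geq k(Q)$, and \eqref{fertile_ancestry_with_ell_help} follows.

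No step looks truly hard: the dichotomy lemma does the heavy lifting, and what remains is bookkeeping. The only point that requires a little care is ensuring that the recursive construction is consistent with the definition of a subtree --- in particular that we always retain at least one child at each internal node, which is automatic since both cases of \cref{lemma_tree_dichotomy_for_capacity} guarantee at least one suitable child. A minor point worth stating cleanly is that retaining $\lceil r^{\gthree} \rceil$ (rather than just "at least $r^{\gthree}$'') children at each type-$\mathrm{I}$ node is what makes these ancestors genuinely $r^{\gthree}$-fertile in $\Gamma'$, which is exactly what the conclusion requires.
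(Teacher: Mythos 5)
Your proposal is correct and follows essentially the same approach as the paper: both build $\Gamma'$ top-down by applying \cref{lemma_tree_dichotomy_for_capacity} at each retained node, and both verify \eqref{fertile_ancestry_with_ell_help} by tracking the growth of $\log_r \HC_r^{\gfour}(\Gamma_Q)$ along root-to-node paths and combining with the trivial bound $\log_r \HC_r^{\gfour}(\Gamma_Q) \leq 0$; the only cosmetic difference is that you unroll the paper's induction on height into an explicit telescoping count of type-I ancestors.
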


\begin{proof}
We will prove the lemma by induction on the height $N$ of the tree $\Gamma$.  To verify the base case, let $\Gamma$ be the tree of height $N=0$: a single node with no children.  Taking $\Gamma' = \Gamma$, the inequality (\ref{fertile_ancestry_with_ell_help}) for this single node follows from the fact that $\log_r \HC_r^{\gfour}(\Gamma) = 0$.

Suppose that $N \in \N$ is such that the theorem holds for all trees of height $N-1$. Let $\Gamma$ be a tree of height $N$ that satisfies (\ref{soft_one_child_policy}). By \cref{lemma_tree_dichotomy_for_capacity}, at least one of Case \ref{tree_dichotomy_case_I} or Case \ref{tree_dichotomy_case_II} holds.

Suppose Case \ref{tree_dichotomy_case_I} of \cref{lemma_tree_dichotomy_for_capacity} holds. Let $Q$ be any one of the $r^{\gthree}$-many children guaranteed by Case \ref{tree_dichotomy_case_I}. By the induction hypothesis, there exists a subtree $\Gamma_Q'$ of $\Gamma_Q$ in which every node satisfies (\ref{fertile_ancestry_with_ell_help}) with $\Gamma_Q$ in place of $\Gamma$ and $\Gamma_Q'$ in place of $\Gamma'$.  Define the subtree $\Gamma'$ of $\Gamma$ to be the root node of $\Gamma$ with the collection of at least $r^{\gthree}$ many children $Q$, each of those children followed by its subtree $\Gamma_Q'$.

We will now verify that (\ref{fertile_ancestry_with_ell_help}) holds for all nodes of $\Gamma'$. Let $Q$ be any node of $\Gamma'$. If $Q$ is the root node of $\Gamma'$, then (\ref{fertile_ancestry_with_ell_help}) holds because $\log_r \HC_r^{\gfour}(\Gamma) \leq 0$.  (Indeed, that $\HC_r^{\gfour}(\Gamma) \leq 1$ follows by considering the cut $\CC \defeq \{Q\}$ of $\Gamma$.) If $Q$ is a non-root node of $\Gamma'$, then it belongs to one of the subtrees $\Gamma_S'$ for some child $S$ of the root of $\Gamma'$. By property (\ref{fertile_ancestry_with_ell_help}) for the subsubtree $\Gamma_S'$, we see
\begin{align*}
|\FC_{\Gamma',r^{\gthree}}(Q)| - 1 &= |\FC_{\Gamma'_S,r^{\gthree}}(Q)| \\
&\geq \frac{|\mathcal{A}_{\Gamma'_S}(Q)|B + \log_r \HC_r^{\gfour}(\Gamma_S)}{A+B} \\
&\geq \frac{(|\mathcal{A}_{\Gamma'}(Q)|-1)B + \log_r \HC_r^{\gfour}(\Gamma) - A}{A+B}.
\end{align*}
This simplifies to the inequality in (\ref{fertile_ancestry_with_ell_help}), verifying the inductive step if Case \ref{tree_dichotomy_case_I} of \cref{lemma_tree_dichotomy_for_capacity} holds.

Suppose Case \ref{tree_dichotomy_case_II} of \cref{lemma_tree_dichotomy_for_capacity} holds. Let $Q$ be the child guaranteed by Case \ref{tree_dichotomy_case_II}. By the induction hypothesis, there exists a subtree $\Gamma_Q'$ of $\Gamma_Q$ in which every node satisfies (\ref{fertile_ancestry_with_ell_help}) with $\Gamma_Q$ in place of $\Gamma$ and $\Gamma_Q'$ in place of $\Gamma'$.  Define the subtree $\Gamma'$ of $\Gamma$ to be the root of $\Gamma$ with only the child $Q$ followed by its subtree $\Gamma_Q'$.

We will now verify that (\ref{fertile_ancestry_with_ell_help}) holds for all nodes of $\Gamma'$. Let $Q$ be any node of $\Gamma'$. If $Q$ is the root node of $\Gamma'$, then (\ref{fertile_ancestry_with_ell_help}) holds because $\log_r \HC_r^{\gfour}(\Gamma) \leq 0$.  If $Q$ is a non-root node of $\Gamma'$, then by property (\ref{fertile_ancestry_with_ell_help}) for the subtree containing $Q$, we see
\[|\FC_{\Gamma',r^{\gthree}}(Q)| \geq \frac{(|\mathcal{A}_{\Gamma'}(Q)|-1)B + \HC_r^{\gfour}(\Gamma)+B}{A+B}.\]
This simplifies to the inequality in (\ref{fertile_ancestry_with_ell_help}), verifying the inductive step if Case \ref{tree_dichotomy_case_II} of \cref{lemma_tree_dichotomy_for_capacity} holds. The proof of the inductive step is complete, and the lemma follows.
\end{proof}

\begin{theorem}
\label{tree_thinning_theorem}
For all $0 < \eps < 1$, for all $0 < \gthree < \gfour < \gfive < \gfour + \eps(\gfour - \gthree)$, for all sufficiently large $r \in \N$, and for all $V > 0$, there exists $N_0 \in \N$ for which the following holds.  For all $N \geq N_0$ and for all trees $\Gamma$ of height $N$ with $\HC_r^{\gfour}(\Gamma) \geq V$ that satisfy (\ref{soft_one_child_policy}), there exists a subtree $\Gamma'$ of $\Gamma$ such that all nodes $Q \in \Gamma'$ with height at least $N_0$ have $(r^{\gthree},1-\eps)$-fertile ancestry in $\Gamma'$.
\end{theorem}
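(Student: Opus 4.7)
The plan is to deduce \cref{tree_thinning_theorem} directly from the preceding \cref{pre_tree_thinning_theorem} by an elementary arithmetic argument; essentially all the structural work has already been done. Given $\Gamma$ of height $N$ satisfying (\ref{soft_one_child_policy}) with $\HC_r^{\gfour}(\Gamma) \geq V$, I would apply \cref{pre_tree_thinning_theorem} to obtain a subtree $\Gamma'$ in which every node $Q$ satisfies
\[|\FC_{\Gamma',r^{\gthree}}(Q)| \;\geq\; \frac{hB + \log_r \HC_r^{\gfour}(\Gamma)}{A+B} \;\geq\; \frac{hB + \log_r V}{A+B},\]
where $h = |\AC_{\Gamma'}(Q)|$ is the height of $Q$. (Note that $\Gamma'$ automatically inherits (\ref{soft_one_child_policy}) because taking a subtree can only decrease the number of children of any node.) The goal is then to find $N_0$ so that whenever $h \geq N_0$ this lower bound is at least $(1-\eps)h$, which is exactly the $(r^{\gthree},1-\eps)$-fertile ancestry condition.

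Rearranging $\frac{hB + \log_r V}{A+B} \geq (1-\eps)h$ gives
\[\log_r V \;\geq\; h\bigl[(1-\eps)A - \eps B\bigr].\]
Substituting $A = \gfive - \gfour + \log_r 2$ and $B = \gfour - \gthree - \log_r 2$, a direct computation yields
\[(1-\eps)A - \eps B \;=\; (\gfive - \gfour) - \eps(\gfive - \gthree) + \log_r 2.\]
The hypothesis $\gfive < \gfour + \eps(\gfour - \gthree)$ combined with $\gfour - \gthree \leq \gfive - \gthree$ gives $(\gfive - \gfour) - \eps(\gfive - \gthree) < 0$; write this strictly negative constant as $-\delta$, where $\delta = \eps(\gfive - \gthree) - (\gfive - \gfour) > 0$ depends only on $\gthree, \gfour, \gfive, \eps$. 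Hence once $r$ is large enough that $\log_r 2 < \delta/2$, the coefficient $(1-\eps)A - \eps B$ is at most $-\delta/2 < 0$.

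For such $r$, the desired inequality reads $\log_r V \geq -h\delta/2$, which holds as soon as $h \geq N_0 \defeq \max\bigl(0,\, \lceil 2\log_r(1/V)/\delta\rceil\bigr)$. Every node $Q \in \Gamma'$ of height at least $N_0$ therefore has $(r^{\gthree}, 1-\eps)$-fertile ancestry in $\Gamma'$, completing the proof. There is no genuine obstacle at this stage: the critical input is the quantitative bound of \cref{pre_tree_thinning_theorem}, and the role of the hypothesis $\gfive < \gfour + \eps(\gfour - \gthree)$ is precisely to force the coefficient of $h$ to be strictly negative (uniformly in $r$ for $r$ sufficiently large), so that the additive term $\log_r V$ is absorbed by a finite height threshold.
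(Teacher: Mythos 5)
Your proposal is correct and takes essentially the same approach as the paper: both deduce the theorem from \cref{pre_tree_thinning_theorem} by choosing $r$ large enough that the coefficient of the height $h$ in the rearranged fertile-ancestry bound is strictly negative, and then choosing $N_0$ large enough to absorb the $\log_r V$ term. (The paper phrases its condition on $r$ as $B/(A+B) > 1-\eps$, which is algebraically equivalent to your $(1-\eps)A - \eps B < 0$; the only cosmetic discrepancy is that your formula for $N_0$ should use $\max(1,\cdot)$ rather than $\max(0,\cdot)$ since the paper's $\N$ denotes the positive integers.)
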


\begin{proof}
Let $0 < \eps < 1$ and $0 < \gthree < \gfour < \gfive < \gfour + \eps(\gfour - \gthree)$.  Let $r \in \N$ be sufficiently large so that $\gfour - \gthree - \log_{r}2 > (1-\eps)(\gfive - \gthree)$. Define $A = \gfive - \gfour + \log_r 2$ and $B = \gfour - \gthree - \log_r 2$, and note by the inequality in the previous sentence, $B / (A+B) > (1-\eps)$. Let $V > 0$. Choose $N_0 \in \N$ such that
\begin{align}\label{key_inequality_in_tree_thinning_theorem}
    \frac{N_0B + \log_r V}{N_0(A+B)} > 1-\eps,
\end{align}
and note that for all $N \geq N_0$, the inequality in (\ref{key_inequality_in_tree_thinning_theorem}) holds with $N_0$ replaced by $N$.

Let $N \geq N_0$, and let $\Gamma$ be a tree of height $N$ with $\HC_r^{\gfour}(\Gamma) \geq V$ that satisfies (\ref{soft_one_child_policy}).  By \cref{pre_tree_thinning_theorem}, there exists a subtree $\Gamma'$ of $\Gamma$ such that for all nodes $Q$ of $\Gamma'$, the inequality in  (\ref{fertile_ancestry_with_ell_help}) holds.

Let $Q$ be a node of $\Gamma'$ with height at least $N_0$. By (\ref{fertile_ancestry_with_ell_help}) and (\ref{key_inequality_in_tree_thinning_theorem}), we see that
\[\frac{|\FC_{\Gamma',r^{\gthree}}(Q)|}{|\mathcal{A}_{\Gamma'}(Q)|} \geq \frac{|\mathcal{A}_{\Gamma'}(Q)|B + \log_r V}{|\mathcal{A}_{\Gamma'}(Q)|(A+B)} > 1-\eps.\]
It follows that $Q$ has $(r^{\gthree},1-\eps)$-fertile ancestry in $\Gamma'$, as was to be shown.
\end{proof}

\section{Proof of the sumsets theorem}
\label{sec_proof_of_theorem}

In this section, we prove \cref{main_real_theorem_intro}, the main theorem in this work.  We restate it here for the reader's convenience.

\begin{named}{Theorem A}{}
Let $r$ and $s$ be multiplicatively independent positive integers, and let $X, Y \subseteq [0,1]$ be $\times r$- and $\times s$-invariant sets, respectively. Define $\ogamma = \min \big ( \dimh X + \dimh Y, 1 \big)$. For all compact $I \subseteq \R \setminus \{0\}$ and all $\gamma < \ogamma$,
\begin{align}
\label{eqn_proof_section_result}
    \inf_{\lambda, \eta \in I}\ \HC_{\inftycontent}^{\gamma} \big( \lambda X + \eta Y \big) > 0.
\end{align}
\end{named}

Several auxiliary results go into the proof: the discrete version of Marstrand's projection theorem in \cref{section_projections_and_discrete_marstrand}, the subtree regularity theorem for finite trees in \cref{tree_section}, and the quantitative equidistribution result in \cref{section_quant_equid}.  We outline the proof of \cref{main_real_theorem_intro} in \cref{section_outline_of_proof} before presenting the full details in Sections \ref{section_proof_of_main_thm} and \ref{section_proof_of_main_claim}.

\begin{remark}
\label{remark_quantifyingA}
It is natural to ask about the value of the infimum $C \defeq \inf_{\lambda, \eta \in I}\ \HC_{\inftycontent}^{\gamma} \big( \lambda X + \eta Y \big)$ that appears in \eqref{eqn_proof_section_result}, or, more precisely, how it depends on $X$ and $Y$.
The value of $C$ must depend on $r$, $s$, $\gamma$, $\ogamma$, and $I$, but also on $X$ and $Y$, at least to the extent that it accounts for the Hausdorff content of $X\times Y$.
It follows from the proof of Theorem A below that this is essentially the only sense in which $C$ depends on $X$ and $Y$.
 
More precisely, there exist $\gfour,\gfive>0$ (depending only on $\gamma$ and $\dimh(X\times Y)$) with $\gfour < \dimh(X\times Y) < \gfive$ such that taking $M_0\in\N$ and $c_1,c_2>0$ as given by \cref{product_of_discrete_invariant_sets_bounds} when applied with $\gfive$ as $\xi$,
the quantity $C$
depends only on $M_0$, $c_1$, $c_2$, $r$, $s$, $I$, $\gamma$, $\dimh(X\times Y)$, and $\HC_{>0}^{\gfour}(X\times Y)$, but otherwise not on $X$ and $Y$.\footnote{As will be evident from the proof, our argument requires from $\gfour$ only that $\HC_{>0}^{\gfour}(X\times Y) > 0$.  Appealing to a result such as \cite[Theorem 3.1]{falconer_techniques_book_1997}, it is possible to take $\gfour=\dimh(X\times Y)$.  This improvement would eliminate a parameter but ultimately would not simplify the argument.}
\end{remark}

\subsection{Outline of the proof of \cref{main_real_theorem_intro}}\label{section_outline_of_proof}

Before beginning with the details of the proof of \cref{main_real_theorem_intro}, we explain the main ideas behind it. To understand the argument, it helps to begin by assuming that the set $X \times Y$ is self-similar in the sense that for every $n \in \Nz$, it is a union of approximately $r^{n (\dimh X + \dimh Y)}$ many translates of the set $r^{-n} X \times s^{-n'} Y$.  (Recall that $n' = \lfloor n \log r / \log s \rfloor$ so that $s^{-n'}\approx r^{-n}$.) This is the case, for example, if $X$ and $Y$ are both restricted digit Cantor sets. In this case, Peres and Shmerkin \cite{peresshmerkin2009} proved that for all $\lambda, \eta \in \R \setminus \{0\}$, $\dimh (\lambda X + \mu Y) = \ogamma$. Our argument follows along the same lines as theirs.

Recall that $\Pi_{t}: \R^2 \to \R$ is the oblique projection $\Pi_{t}(x,y) = x + t y$. A quick calculation shows that
\[\Pi_{e^t} (r^{-n} X \times s^{-n'} Y) = r^{-n} \Pi_{e^t r^n / s^{n'}} (X \times Y),\]
which implies that the images of the translates of $r^{-n} X \times s^{-n'} Y$ under the map $\Pi_{e^t}$ are affinely equivalent to the image of the full set $X \times Y$ under the map $\Pi_{e^t r^n / s^{n'}}$. It follows that the set $\Pi_{e^t} (X \times Y)$ contains affine images of the sets $\Pi_{e^t r^n / s^{n'}} (X \times Y)$ and hence that
\[\dimh \Pi_{e^t} (X \times Y) \geq \sup_{n \in \Nz} \dimh \Pi_{e^t r^n / s^{n'}} (X \times Y).\]
Thus, to bound $\dimh \Pi_{e^t} (X \times Y)$ from below, it suffices to show that there is some $n \in \Nz$ for which $e^t r^n / s^{n'}$ is a ``good angle'' for $X \times Y$, in the sense that $\dimh \Pi_{e^t r^n / s^{n'}} (X \times Y) > \ogamma - \eps$. It follows from Marstrand's theorem that the set of such ``good angles'' for $X \times Y$ (indeed, for any set) has full measure in $\R$, and it will be shown that the sequence $n \mapsto \log(e^t r^n / s^{n'})$ has image in $[t,t+\log s)$ and is the orbit of $t$ under the irrational $x \mapsto x + \log r \pmod{\log s}$ translated by $t$. When combined, these facts fall just short of allowing us to conclude the existence of $n \in \Nz$ for which $e^t r^n / s^{n'}$ is a good angle: it is possible that the image of an equidistributed sequence misses a set of full measure.

To make use of the above outline, one needs to gain some topological information on the set of good angles from Marstrand's theorem. This can be accomplished by moving the argument to a discrete setting. Discretizing introduces a number of technical nuisances, but the core of the argument remains the same. Recall that $X_n$ and $Y_{n'}$ are the sets $X$ and $Y$ rounded to the lattices $r^{-n} \Z$ and $s^{-n'} \Z$, respectively. The discrete analogue of Marstrand's theorem in \cref{thm:discretemarstrandbase} tells us that the complement of the set of ``good angles'' for a finite set such as $X_n \times Y_{n'}$ can be covered by a disjoint union of few half-open intervals.  This topological information combines with the equidistribution of the irrational rotation described above to allow us to find many $n \in \Nz$ for which $e^t r^n / s^{n'}$ is a good angle for $X_n \times Y_{n'}$.

The argument described thus far is essentially due to Peres and Shmerkin in \cite{peresshmerkin2009} and allows them to conclude that for all $t \in \R \setminus \{0\}$, $\dimh \Pi_{e^t} (X \times Y) = \ogamma$.  We will now describe the two primary modifications we make to this argument in the course of the proof of \cref{main_real_theorem_intro}.

The first modification allows us to show that the discrete Hausdorff content of $\Pi_{e^t} (X \times Y)$ at all small scales is uniform in $t$.  Ultimately, this uniformity stems from the fact that the irrational rotation described above is uniquely ergodic: changing $t$ in the argument above changes only the point whose orbit we consider.  Exposing the uniformity in the argument after this is then mainly a matter of taking care with the quantifiers in the auxiliary results.

The second modification allows us to handle sets $X$ and $Y$ which are only assumed to be $\times r$- and $\times s$-invariant. Such sets need not be self-similar, but they do exhibit some ``near self similarity'' in the following sense.  Consider the discrete set $X_m$ for some large $m \in \N$.  Because $X$ is $\times r$-invariant, the set $X_{(n+1)m} \cap \big[i /  r^{nm}, (i + 1)/ r^{nm} \big)$, when dilated by $r^{mn}$ and considered modulo $1$, is a subset of $X_m$.  While this set is generally not equal to $X_m$, it is, by an averaging argument, very often of cardinality greater than $r^{-\eps} |X_m|$. This is profitably re-interpreted in the language of trees: in the tree with levels $X_{nm} \times Y_{(nm)'}$, $n \in \Nz$, many nodes have nearly the maximum allowed number of children.  The tree thinning result in \cref{tree_thinning_theorem} exploits this abundance by finding a sufficiently ``regular'' subtree on which we focus our attention.  Then, we invoke our discrete analogue of Marstrand's theorem -- which provides information on the set of angles that are good not only for the original set $X_m \times Y_{m'}$, but also for large subsets of it -- to further thin the subtree. Following the reasoning given in \cref{remark_reason_for_fertile_ancestry}, the resulting subtree has fertile ancestry and hence has large Hausdorff content.  By the construction of the subtree, its image under $\Pi_{e^t}$ is large, and this yields the lower bound on the Hausdorff dimension in the conclusion of the theorem.

\subsection{Proof of \cref{main_real_theorem_intro}}\label{section_proof_of_main_thm}

In this section and the next, let $r$, $s$, $X$, $Y$, and $\ogamma$ be as given as in the statement of \cref{main_real_theorem_intro}.  The proof of \cref{main_real_theorem_intro} begins with a number of reductions, the last of which in \cref{theorem_uniform_hausdorff_dimension_projections_second_form} is a statement about the existence of measures on the images of the discrete product sets under oblique projections.  We prove \cref{theorem_uniform_hausdorff_dimension_projections_second_form} in the next subsection.

By \cref{dimension_of_product_of_invariant_sets}, $\dimh ( X \times Y) = \dimh X + \dimh Y$. Note that if $\dimh X = 0$, then the conclusion is clear by considering, for any $x \in X$, images of the set $\{x\} \times Y$. The same is true if $\dimh Y=0$. Thus, we will proceed under the assumption that $\dimh X, \dimh Y > 0$.  Note that the set $1-X$ is $\times r$-invariant and that $-\lambda X + \eta Y$ is a translate of the set $\lambda(1-X) + \eta Y$.  The analogous statement holds for $Y$.  Combining these facts, it is easy to see that it suffices to prove \cref{main_real_theorem_intro} in the case that $I \subseteq (0,\infty)$.

The next step is to formulate a statement sufficient to prove \cref{main_real_theorem_intro} in terms of oblique projections of discrete sets. Recall that $n' = \lfloor n \log  r / \log s\rfloor$ and that $X_n$, $Y_{n'}$ are the sets $X$ and $Y$ rounded to the lattices $r^{-n} \Z$ and $s^{-n'} \Z$, respectively. For $n \in \Nz$, define
\begin{align}\label{def_of_product_sets}
    \QC_n = X_n \times Y_{n'} \qquad \text{ and } \qquad \QCt_n = X_n \times Y_{n'+1}.
\end{align}

\begin{claim}\label{theorem_uniform_hausdorff_dimension_projections_second_form}
For all compact $I \subseteq \R$ and all $0 < \gamma < \ogamma$, there exists $m, N_0 \in \N$ such that for all $N \geq N_0$ and all $t \in I$, there exists a probability measure $\mu$ supported on the finite set $\Pi_{e^{t}} \QC_{Nm}$ with the property that for all balls $B \subseteq \R$ of diameter $\delta \geq r^{-Nm}$, $\mu(B) \leq r^{N_0m} \delta^{\gamma}$.
\end{claim}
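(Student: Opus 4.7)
The plan is to build, for each $t\in I$ and each sufficiently large $N$, a subtree of the natural ``cylinder tree'' on the levels $\QC_{nm}$, $n=0,\ldots,N$, whose leaves form a subset of $\QC_{Nm}$ that projects under $\Pi_{e^t}$ to a set carrying a uniform measure with the desired Frostman bound. Fix parameters $\gamma<\gtwo<\gthree<\gfour<\gfive$ with $\gtwo<1$, $\gfour<\ogamma$, $\gfive>\dimh X+\dimh Y$, and $2(\gfive-\gthree)<\gfour-\gtwo$, and take $m$ large enough that \cref{tree_thinning_theorem} applies at base $r^m$ with these exponents. Regard $\QC_{nm}$ as the level-$n$ nodes of a tree $\Gamma$ whose children of a node $Q\in\QC_{nm}$ are the elements of $\QC_{(n+1)m}$ lying in the $r^{-nm}\times s^{-(nm)'}$ cell at $Q$. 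By \cref{mult_invariance_of_approximations} and \cref{product_of_discrete_invariant_sets_bounds}, the collection of children, rescaled by the affine map sending the cell to $[0,1]^2$, is a subset of one of $\QC_m$ or $\QCt_m$ and so has size at most $r^{m\gfive}$; hence condition~(\ref{soft_one_child_policy}) holds at base $r^m$. The cardinality bound $|\QC_{Nm}|\geq r^{Nm\gfour}$ together with the branching upper bound yields $\HC_{r^m}^{\gfour}(\Gamma)\geq V$ for a fixed $V>0$. \cref{tree_thinning_theorem} then produces a subtree $\Gamma_1\subseteq\Gamma$ in which every node of height $\geq N_0$ has $(r^{m\gthree},1-\epsilon)$-fertile ancestry, for any preassigned $\epsilon>0$.

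Apply \cref{thm:discretemarstrandcorollary} once, on the compact interval $I'\defeq I+[0,\log s]$ with $\rho=r^{-m}$, to each of $\QC_m$ and $\QCt_m$, and take $T\subseteq I'$ to be the intersection of the resulting ``good-parameter'' sets. Its complement in $I'$ is still covered by finitely many disjoint half-open intervals of total Lebesgue measure less than $\epsilon'$, and for every $t'\in T$ and every subset $A'$ of cardinality $\geq r^{m\gthree}$ of either $\QC_m$ or $\QCt_m$, there is a sub-subset of cardinality $\geq r^{m\gtwo}$ with pairwise $c_3 r^{-m}$-separated $\Pi_{e^{t'}}$-images. A direct computation gives
\[
    \log\bigl(e^t r^{nm}/s^{(nm)'}\bigr)=t+\{nm\log r/\log s\}\log s,
\]
and the sequence $n\mapsto\{nm\log r/\log s\}$ is uniformly distributed on $[0,1)$ because $\log r/\log s$ is irrational. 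By \cref{cor:higherunifdistupdated}, applied uniformly in $t\in I$, for $N$ large enough at least a $(1-2\epsilon')$-fraction of indices $n\in\{0,\ldots,N-1\}$ are \emph{good for $t$} in the sense that $\log(e^t r^{nm}/s^{(nm)'})\in T$.

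For each $t\in I$, thin $\Gamma_1$ to $\Gamma_2=\Gamma_2(t)$ as follows: at every node $Q$ at level $n$ that is both fertile in $\Gamma_1$ and good for $t$, replace the children of $Q$ by the size-$\geq r^{m\gtwo}$ subset whose rescalings are guaranteed by \cref{thm:discretemarstrandcorollary} at the parameter $t'=\log(e^tr^{nm}/s^{(nm)'})$; at every other node, keep a single arbitrary child. Let $\mu$ be the pushforward under $\Pi_{e^t}$ of the uniform probability measure on the leaves of $\Gamma_2$. Using the affine identity $\Pi_{e^t}(r^{-nm}x,s^{-(nm)'}y)=r^{-nm}\Pi_{e^t r^{nm}/s^{(nm)'}}(x,y)$, the $\Pi_{e^t}$-images of the selected children inside a single parent cell at level $n$ are $c_3 r^{-(n+1)m}$-separated in $\R$. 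Consequently, for a ball $B\subseteq\R$ of diameter $\delta\in[r^{-Nm},1]$, setting $n_0=\lceil\log_{r^m}(c_3/\delta)\rceil$, at each good-fertile level $n\in[n_0,N-1]$ the ball $B$ meets at most one branch; hence $\mu(B)\leq r^{-m\gtwo K_g}$, where $K_g\geq(1-\Oh(\epsilon+\epsilon'))(N-n_0)$. Choosing $\epsilon,\epsilon'$ small so that $\gtwo(1-\Oh(\epsilon+\epsilon'))>\gamma$ and enlarging $N_0$ to absorb the constant $c_3$ yields $\mu(B)\leq r^{N_0 m}\delta^{\gamma}$, as required.

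The main obstacle is the careful bookkeeping at bad levels: any non-good or non-fertile level could, in principle, allow many descendants to crowd into $B$, since no separation is guaranteed there. The construction circumvents this by keeping only one child at every bad level, at the cost of reducing the leaf count; ensuring that enough leaves survive then forces the bad-level proportion to be bounded by the margin dictated by $\gtwo-\gamma$ and $\gfive-\gfour$. A second subtlety is that the single set $T$ of good Marstrand parameters, obtained at the single scale $r^{-m}$, must simultaneously serve every level of the tree; the affine identity above achieves this, converting the Marstrand property at scale $r^{-m}$ into a property at every scale of the tree, at the cost of shifting the projection parameter along the orbit of the irrational rotation $x\mapsto x+\log r\pmod{\log s}$. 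The uniformity in $t\in I$ then follows from the uniform equidistribution statement in \cref{cor:higherunifdistupdated}.
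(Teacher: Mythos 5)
Your overall structure closely follows the paper's: construct the cylinder tree on the levels $\QC_{nm}$, apply \cref{tree_thinning_theorem} at base $r^m$ to get a subtree with fertile ancestry, apply \cref{thm:discretemarstrandcorollary} at $I+[0,\log s]$ once to each of $\QC_m$ and $\QCt_m$, use the irrational rotation $n\mapsto\{nm\log r/\log s\}\log s$ together with \cref{cor:higherunifdistupdated} to find many good levels, and thin the tree accordingly. All of that matches the paper.

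However, there is a genuine gap in the measure construction. You take $\mu$ to be the pushforward of the \emph{uniform} probability measure on the leaves of $\Gamma_2$. This does not give the Frostman bound, because the number of leaves descending from a node $Q$ is not controlled by the height of $Q$ alone: the branching at a good level $n$ is $\geq r^{m\gtwo}$ only if the particular node at level $n$ is fertile in $\Gamma_1$, and this varies from sibling to sibling. For example, if one child of the root is fertile at every level below while its siblings are not, the uniform measure places nearly all its mass under that one child, far exceeding $r^{-m\gtwo}$. The paper instead takes $\nu$ to be the measure that \emph{splits mass equally among children} at each node; the bound $\nu(Q)\leq r^{-m\gtwo\cdot|\FC_{\Gamma'',r^{m\gtwo}}(Q)|}$ then follows directly from the definition of $\nu$ and the fertile-ancestry property, which is exactly the output of \cref{tree_thinning_theorem}. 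The uniform measure on leaves does not satisfy such a bound without a further argument that equalizes subtree sizes, and no such argument appears in your proposal.

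Relatedly, the exponent in your estimate is pointing in the wrong direction: you write $\mu(B)\leq r^{-m\gtwo K_g}$ with $K_g\geq(1-\Oh(\eps+\eps'))(N-n_0)$, counting good-fertile levels \emph{below} the common ancestor. But once you know, via the separation claim, that all leaves projecting into $B$ descend from a single node $Q$ at height roughly $n_0\approx\log_\rho\delta$, what governs the mass of $B$ is the number of $r^{m\gtwo}$-fertile \emph{ancestors} of $Q$, which by \eqref{main_claim_about_gamma_prime_prime} (or rather your version of it) is $\geq(1-\eps/2)n_0$; this yields $\nu(Q)\leq r^{-m\gtwo(1-\eps/2)n_0}\leq\rho^{-N_0}\delta^\gamma$ as needed. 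With your formula $(N-n_0)$, the estimate fails at the smallest scales $\delta\approx r^{-Nm}$, where $n_0\approx N$ and the exponent $N-n_0$ vanishes while the target $\delta^\gamma$ is tiny. Fixing the exponent and switching to the splitting measure closes both gaps and recovers the paper's argument.
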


To deduce \cref{main_real_theorem_intro} from \cref{theorem_uniform_hausdorff_dimension_projections_second_form}, let $I \subseteq (0, \infty)$ be compact and $0 < \gamma < \ogamma$. Apply \cref{theorem_uniform_hausdorff_dimension_projections_second_form} with $\tilde I \defeq \big\{\log(\eta/\lambda) \ \big| \ \eta,\lambda\in I \big\}$ as $I$ and $\gamma$ as it is.  Let $m, N_0 \in \N$ be as guaranteed by \cref{theorem_uniform_hausdorff_dimension_projections_second_form}.

Note that by \cref{lemma_connection_between_discrete_and_unlimited_H_contents} and the fact that the function $\rho \mapsto \HC_{\geq \rho}^{\gamma} \allowbreak \big( \lambda X + \eta Y \big)$ is non-increasing (as $\rho$ decreases),
\[\inf_{\lambda, \eta \in I}\ \HC_{\inftycontent}^{\gamma} \big( \lambda X + \eta Y \big)  = \lim_{\rho \to 0} \inf_{\lambda, \eta \in I}\ \HC_{\geq \rho}^{\gamma} \big( \lambda X + \eta Y \big).\]
The limit in the final expression exists because $\inf_{\lambda, \eta \in I}\ \HC_{\geq \rho}^{\gamma} \big( \lambda X + \eta Y \big)$ is non-increasing and is bounded from below by zero.

Therefore, to show that \eqref{eqn_proof_section_result} holds, it suffices to prove that
\begin{align}\label{lower_bound_on_hausdorff_content_in_main_theorem_two}
    \lim_{N \to \infty}\ \inf_{\lambda, \eta \in I}\ \HC_{\geq r^{-Nm}}^{\gamma} \big( \lambda X + \eta Y \big) > 0.
\end{align}
It follows from the fact that
\[d_H(\lambda X_{Nm} + \eta Y_{(Nm)'}, \lambda X + 
\eta Y) \ll_{I, r, s} r^{-Nm},\]
where $d_H$ is the Hausdorff metric, and \cref{lem_box_counting_estimate} that for all $\lambda, \eta \in I$,
\begin{align}\label{bounds_on_hausdorff_content_in_proof_reduction}
    \begin{aligned}\HC_{\geq r^{-Nm}}^{\gamma} \big( \lambda X + \eta Y \big) &\asymp_{I,r,s} \HC_{\geq r^{-Nm}}^{\gamma} \big(\lambda X_{Nm} + \eta Y_{(Nm)'} \big)\\
    &\asymp_{I,r,s} \HC_{\geq r^{-Nm}}^{\gamma} \big( X_{Nm} + e^{\log(\eta / \lambda)} Y_{(Nm)'} \big).\end{aligned}
\end{align}
Therefore, to show \eqref{lower_bound_on_hausdorff_content_in_main_theorem_two}, it suffices to prove that
\begin{align}\label{lower_bound_on_hausdorff_content_in_main_theorem_three}
    \lim_{N \to \infty}\ \inf_{t \in \tilde I}\ \HC_{\geq r^{-Nm}}^{\gamma} \big( \Pi_{e^{t}} \QC_{Nm} \big) > 0.
\end{align}
Combining the conclusion of \cref{theorem_uniform_hausdorff_dimension_projections_second_form} with \cref{lem_consequence_of_discrete_frostman}, we see that for all $N \geq N_0$ and $t \in \tilde I$, $\HC_{\geq r^{-Nm}}^{\gamma} \big(\Pi_{e^{t}} \QC_{Nm} \big) \geq r^{-N_0m}$.  This shows that the limit in \eqref{lower_bound_on_hausdorff_content_in_main_theorem_three} is positive and completes the deduction of \cref{main_real_theorem_intro} from \cref{theorem_uniform_hausdorff_dimension_projections_second_form}.

\subsection{Proof of \cref{theorem_uniform_hausdorff_dimension_projections_second_form}}\label{section_proof_of_main_claim}

\textbf{Choosing the parameter $m$ and scale $\rho$.} \quad Recall that $r$, $s$, $X$, $Y$, and $\ogamma$ are as given as in the statement of \cref{main_real_theorem_intro}. Without loss of generality, we can assume that $r < s$.  Put $\beta = \log s$, let $0< \gamma < \ogamma$, and define $\eps \defeq \ogamma - \gamma$ and $\gone\defeq  \gamma$. 

We claim that there exist $\gtwo$, $\gthree$, $\gfour$, and $\gfive$ such that
\begin{enumerate}[label=(\Roman*)]
    \item \label{gamma_inequality_one} $0 < \gone / (1-\eps/2) < \gtwo < \gthree < \gfour< \dimh X + \dimh Y < \gfive$;
    \item \label{gamma_inequality_two} $\gfive < \gfour + \eps (\gfour - \gthree) / 6$;
    \item \label{gamma_inequality_three} $\gtwo < 1$;
    \item \label{gamma_inequality_four} $2(\gfive - \gthree) < \gfour - \gtwo$ (this is the inequality in (\ref{inequality_between_gammas_for_marstrand})).
    \newcounter{tryhard}
    \setcounter{tryhard}{\arabic{enumi}}
\end{enumerate}
To see why, note that if we put $\gtwo = \gone / (1-\eps/2)$, $\gfive = \gfour = \dimh X + \dimh Y$, and $\gthree = \gtwo / 3 + 2 \gfour / 3$, then the inequalities in \ref{gamma_inequality_one} holds with ``$<$'' replaced by ``$\leq$'', while the inequalities in \ref{gamma_inequality_two}, \ref{gamma_inequality_three}, and \ref{gamma_inequality_four} hold as written.  It follows that $\gtwo$ and $\gfive$ can be increased and $\gfour$ can be decreased (with the corresponding change to $\gthree = \gtwo / 3 + 2 \gfour / 3$) so that all of the strict inequalities hold.

Let $c_1$, $c_2$, and $M_0$ be the constants guaranteed by \cref{product_of_discrete_invariant_sets_bounds}, when applied with $\gfive$ as $\xi$.
Let $I \subseteq (0,\infty)$ be compact, and define $I_\beta = I + [0,\beta]$. 
Let $P > 0$ be a Lipschitz constant for all of the maps $\Pi_{e^t}$, $t \in I_\beta$, and let $c_3=4Ps^{-1}+1$. Choose $m\in\N$ large enough so that we can apply
\begin{itemize}
    \item \cref{tree_thinning_theorem} with $\eps / 6$ as $\eps$ and $r^m$ as $r$;
    \item \cref{thm:discretemarstrandcorollary} with $I_\beta$ as $I$, $\epsilon\beta/12$ as $\epsilon$ and $r^{-m}$ as $\rho$;
    \item \cref{product_of_discrete_invariant_sets_bounds} with $m$ as $N$ (i.e., $m \geq M_0$). 
\end{itemize}
Put $\rho = r^{-m}$.
\\

\noindent\textbf{A uniformly distributed sequence.} \quad Let $\al =\log \big(r^m / s^{m'}\big)$ and let $R:[0,\beta)\to[0,\beta)$ be the transformation $R:x \mapsto x + \al \pmod\beta$.
As $\beta=\log s$ and $m'=\lfloor m\log r/\log s\rfloor$, we have \begin{align}\label{eq_alphabeta}
    \alpha/\beta=m\log r/\log s-m'=\big\{m\log r/\log s \big\}. 
\end{align}
Since $\log r/\log s$ is irrational, we conclude that  $\al/\beta$ is irrational, whereby the sequence $(R^n(0))_{n \in \Nz}$ is uniformly distributed on $[0,\beta)$.

\begin{claim} For all $n \in \Nz$,
\begin{enumerate}[label={\normalfont(\Roman*)}]
\setcounter{enumi}{\arabic{tryhard}}
\item \label{item:descriptionofrotation} $R^n(0)+(nm)'\log s= nm\log r $; 
\eqnitem\label{itemVII} 
\[\big((n+1)m\big)'=\begin{cases} (nm)'+m'&\text{ if }R^n(0)+\alpha<\beta\\
(nm)'+m'+1&\text{ if }R^n(0)+\alpha>\beta\end{cases}.\]
\setcounter{tryhard}{\arabic{enumi}}
\end{enumerate}
\end{claim}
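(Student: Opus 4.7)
The plan is to prove \ref{item:descriptionofrotation} first by a direct computation from equation~\eqref{eq_alphabeta}, and then derive \ref{itemVII} by applying \ref{item:descriptionofrotation} at both $n$ and $n+1$ and subtracting.

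For \ref{item:descriptionofrotation}, I would start from the observation that, since $R$ is the rotation by $\alpha$ on $[0,\beta)$, we have $R^n(0) = n\alpha \pmod \beta = \beta \{n\alpha/\beta\}$. By \eqref{eq_alphabeta}, $n\alpha/\beta = nm\log r/\log s - nm'$, and since $nm'$ is an integer, $\{n\alpha/\beta\} = \{nm\log r/\log s\}$. Therefore
\[
R^n(0) = \beta \cdot \{nm\log r/\log s\} = \log s \cdot \bigl( nm\log r / \log s - (nm)' \bigr) = nm\log r - (nm)'\log s,
\]
where in the middle equality I used the definition $(nm)' = \lfloor nm\log r/\log s \rfloor$. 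Rearranging yields \ref{item:descriptionofrotation}.

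For \ref{itemVII}, applying \ref{item:descriptionofrotation} at level $n+1$ and subtracting the version at level $n$ gives
\[
\bigl(R^{n+1}(0) - R^n(0)\bigr) + \bigl(((n+1)m)' - (nm)'\bigr)\log s = m\log r.
\]
By the definition of $R$, the difference $R^{n+1}(0) - R^n(0)$ equals $\alpha$ if $R^n(0)+\alpha < \beta$ and equals $\alpha - \beta$ if $R^n(0)+\alpha > \beta$ (the case $R^n(0)+\alpha = \beta$ is ruled out by the irrationality of $\alpha/\beta$). Since $m\log r - \alpha = m'\log s$ by the definition of $\alpha$, in the first case we obtain $((n+1)m)' - (nm)' = m'$, while in the second case we obtain $((n+1)m)' - (nm)' = m' + 1$, which is exactly \ref{itemVII}.

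The argument is essentially bookkeeping: there is no real obstacle, only the need to keep the modular arithmetic on $[0,\beta)$ aligned with the floor function in the definition of $(nm)'$. The only subtlety is to note that, thanks to the irrationality of $\log r/\log s$ (hence of $\alpha/\beta$), the boundary case $R^n(0)+\alpha = \beta$ never occurs for $n \geq 0$, so the dichotomy in \ref{itemVII} is exhaustive.
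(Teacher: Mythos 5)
Your proof is correct. Part~\ref{item:descriptionofrotation} is established exactly as in the paper, by unwinding the fractional-part identity \eqref{eq_alphabeta}. For part~\ref{itemVII} you take a slightly different but equally valid route: the paper applies a floor-addition identity $\lfloor x+y\rfloor=\lfloor x\rfloor+\lfloor y\rfloor+\one_{\{x\}+\{y\}\geq 1}$ with $x=nm\log r/\beta$ and $y=m\log r/\beta$, whereas you subtract the identity of \ref{item:descriptionofrotation} at levels $n$ and $n+1$ and evaluate $R^{n+1}(0)-R^n(0)$ by cases. The two manipulations are essentially equivalent and rest on the same arithmetic; your telescoping version has the mild advantage of making \ref{itemVII} an explicit corollary of \ref{item:descriptionofrotation} without invoking a separate floor lemma. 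You also correctly note, as the paper leaves implicit, that the boundary case $R^n(0)+\alpha=\beta$ is excluded by the irrationality of $\alpha/\beta$.
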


\begin{proof}
Since for all $n\in\N$, $R^n(0)=n\alpha\pmod{\beta}$, using \eqref{eq_alphabeta}, we can write $R^n(0)/\beta=\big\{n\alpha/\beta\big\}=\big\{n\{m\log r/\beta\}\big\}=\big\{nm\log r/\beta\big\}$. 
Recalling that $(nm)'=\lfloor nm\log r/\beta\rfloor$, this establishes \ref{item:descriptionofrotation}.

Next, note that for any real numbers $x,y$,
$$\lfloor x+y\rfloor=\begin{cases}
\lfloor x\rfloor+\lfloor y\rfloor&\text{ if }\{x\}+\{y\}<1
\\
\lfloor x\rfloor+\lfloor y\rfloor+1&\text{ if }\{x\}+\{y\}\geq1
\end{cases}.$$
The equality in \ref{itemVII} follows from this by substituting $x=nm\log r/\beta$ and $y=m\log r/\beta$ and using $R^n(0)/\beta=\big\{nm\log r/\beta \big\}$ and \eqref{eq_alphabeta}.
\end{proof}

\noindent\textbf{Choosing the parameter $N_0$.} \quad From \cref{product_of_discrete_invariant_sets_bounds}, the sets $\QC_{m}$ and $\QCt_{m}$ (defined in \eqref{def_of_product_sets}) are $(c_1\rho,\gfive)_{c_2}$-sets and satisfy
\begin{align}\label{size_bounds_on_discrete_products}
    \rho^{-\gfour} \leq |\QC_{m}|, |\QCt_{m}| \leq \rho^{-\gfive}.
\end{align}
Let $T_1$ (resp. $T_2$) be the subset of $I_\beta$ obtained from applying \cref{thm:discretemarstrandcorollary} with $I_\beta$ as $I$, $\eps \beta / 12$ as $\eps$ and $\QC_m$ (resp. $\QCt_m$) as $A$. Put $T = T_1 \cap T_2$. It follows from \cref{thm:discretemarstrandcorollary} that $I_\beta \setminus T$ is covered by a disjoint union of not more than $U \defeq \lceil 2 \eps \beta \rho^{-1} / 24 \rceil$ many half-open intervals of Lebesgue measure less than $\eps \beta / 6$.

Let $N_0 \in \N$ be the larger of
\begin{itemize}
\item the $N_0$ from \cref{tree_thinning_theorem} with $\eps / 6$ as $\eps$, $r^m$ as $r$, and $2^{-\gfour} \HC^{\gfour}_{\inftycontent}(X \times Y)$ as $V$; 
\item the $N_0$ from \cref{cor:higherunifdistupdated} with $(R^n(0))_{n \in \Nz}$ as $(x_n)_{n \in \Nz}$ and $\eps / 3$ as $\eps$.
\end{itemize}

~

\noindent\textbf{Fixing the parameters $N$ and $t$.} \quad To prove \cref{theorem_uniform_hausdorff_dimension_projections_second_form}, we will show that for all $N \geq N_0$ and all $t \in I$ there exists a probability measure $\mu$ supported on the set $\Pi_{e^{t}} \QC_{Nm}$ with the property that for all balls $B \subseteq \R$ of diameter $\delta \geq \rho^N$, $\mu(B) \leq \rho^{-N_0} \delta^{\gamma}$.
Let $N \geq N_0$ and $t \in I$. From this point on, all new quantities and objects can depend on $N$ and $t$.\\

\noindent\textbf{Constructing the tree $\Gamma$.} \quad Let $\Gamma$ be the tree (see \cref{def_tree}) of height $N$ with node set at height $n \in \{0, 1, \ldots, N\}$ equal to $\QC_{nm}$.
Associating the point $(i/ {r^{mn}}, j/s^{(mn)'}) \in \QC_{nm}$ with the rectangle
\[\left[\frac i{r^{mn}},\frac{i+1}{r^{mn}}\right)\times\left[\frac{j}{s^{(mn)'}},\frac{j+1}{s^{(mn)'}}\right),\]
parentage in the tree $\Gamma$ is determined by containment amongst associated rectangles.
Denote by $C_\Gamma(Q)$ the children of the node $Q$ in $\Gamma$. Denote by $\odot: \R^2 \times \R^2 \to \R^2$ the binary operation of pointwise multiplication.
\begin{claim}\leavevmode
Let $n<N$ and $Q\in\QC_{nm}$.
\begin{enumerate}[label={\normalfont (\Roman*)},leftmargin=*] \setcounter{enumi}{\arabic{tryhard}}
    \item \label{item:caseonenode} If $R^n(0) + \alpha < \beta$, then     $C_\Gamma(Q) \subseteq Q + (r^{-nm}, s^{-(nm)'}) \odot \QC_{m}$. 
    \item \label{item:casetwonode} If $R^n(0) + \alpha > \beta$, then     $C_\Gamma(Q) \subseteq Q + (r^{-nm}, s^{-(nm)'}) \odot \QCt_{m}$. 
    \item\label{item_hausdorffcontent} $\HC_{r^m}^{\gfour}(\Gamma) \geq 2^{-\gfour}\HC^{\gfour}_{\inftycontent}(X \times Y)$.\\
    \setcounter{tryhard}{\arabic{enumi}}
\end{enumerate}
\end{claim}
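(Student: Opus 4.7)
The plan for parts \ref{item:caseonenode} and \ref{item:casetwonode} is structural: read off the children of a node directly from the rectangle subdivision. A node $Q = (i/r^{nm}, j/s^{(nm)'}) \in \QC_{nm}$ has rectangle $R_Q$ which, at level $n+1$, is subdivided into an $r^m \times s^{((n+1)m)'-(nm)'}$ grid of sub-rectangles of dimensions $r^{-(n+1)m} \times s^{-((n+1)m)'}$. By item \ref{itemVII}, the second factor in the grid size equals $s^{m'}$ in case \ref{item:caseonenode} and $s^{m'+1}$ in case \ref{item:casetwonode}. Each child of $Q$ is therefore of the form $Q + (r^{-nm}, s^{-(nm)'}) \odot (k/r^m, \ell/s^{m'+\varepsilon})$ with $\varepsilon \in \{0,1\}$ chosen accordingly, $k \in \{0,\dots,r^m-1\}$, and $\ell \in \{0,\dots,s^{m'+\varepsilon}-1\}$. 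The remaining task is to show that the displaced point $(k/r^m, \ell/s^{m'+\varepsilon})$ lies in $\QC_m$ (resp.\ $\QCt_m$); I would iterate \cref{mult_invariance_of_approximations} $nm$ times on the $x$-coordinate and $(nm)'$ times on the $y$-coordinate, using the $\times r$-invariance of $X$ and the $\times s$-invariance of $Y$ respectively to transport the child back down to $X_m \times Y_{m'+\varepsilon}$. Direct computation of the relevant $T_r^{nm}$ and $T_s^{(nm)'}$ images confirms that they equal $k/r^m$ and $\ell/s^{m'+\varepsilon}$.

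For part \ref{item_hausdorffcontent}, the plan is to convert any cut $\CC$ of $\Gamma$ into a ball cover of $X \times Y$. The rectangles $\{R_Q : Q \in \CC\}$ already cover $X \times Y$: every $(x,y) \in X \times Y$ is contained in a unique rectangle at each level of $\Gamma$, and this nested root-to-leaf sequence of containing rectangles must cross $\CC$. Bounding each $R_Q$ at height $h$ inside a Euclidean ball of diameter at most $2 r^{-hm}$ then converts $\CC$ into a valid cover of $X \times Y$ whose total $\gfour$-content is at most $2^{\gfour} \sum_{Q \in \CC} r^{-hm\gfour}$. Taking the infimum over cuts yields $\HC^{\gfour}_\infty(X \times Y) \leq 2^{\gfour} \HC^{\gfour}_{r^m}(\Gamma)$, which rearranges to \ref{item_hausdorffcontent}.

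The step requiring the most care is the ``ball of diameter $2 r^{-hm}$'' bound in part \ref{item_hausdorffcontent}: since $R_Q$ has dimensions $r^{-hm} \times s^{-(hm)'}$ with the aspect ratio $s^{-(hm)'}/r^{-hm}$ possibly as large as $s$, the Euclidean diameter of $R_Q$ can exceed $2 r^{-hm}$ when $s \geq 2$. I expect to handle this either by absorbing the geometric factor $\sqrt{1+s^2}$ into the leading constant (reading ``$2$'' as shorthand for a quantity depending only on $r$ and $s$) or by covering each elongated rectangle with a bounded number of smaller balls at a uniform cost. Either route produces a uniform positive constant in place of $2^{-\gfour}$, which is all that the downstream argument requires to deduce \cref{theorem_uniform_hausdorff_dimension_projections_second_form}.
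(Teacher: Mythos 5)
Your proposal is correct and follows essentially the same route as the paper: both prove \ref{item:caseonenode} and \ref{item:casetwonode} by combining the grid structure of the rectangle subdivision with iterates of \cref{mult_invariance_of_approximations} (and \ref{itemVII} to pin down whether the vertical grid size is $s^{m'}$ or $s^{m'+1}$), and both prove \ref{item_hausdorffcontent} by converting an arbitrary cut of $\Gamma$ into a ball cover of $X \times Y$. Your caveat about the diameter bound is well taken: the rectangle at height $h$ has dimensions $r^{-hm} \times s^{-(hm)'}$ with $r^{-hm} \leq s^{-(hm)'} < s\,r^{-hm}$, so its Euclidean diameter is bounded by $\sqrt{1+s^2}\,\rho^{h}$ rather than $2\rho^{h}$ when $s \geq 2$, and the paper's stated constant $2^{-\gfour}$ should really be $(\sqrt{1+s^2})^{-\gfour}$ (or any fixed constant depending on $s$); since this quantity is only used as a positive lower bound $V$ fed into \cref{tree_thinning_theorem}, the discrepancy is harmless, exactly as you observe.
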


\begin{proof}
    We first prove parts \ref{item:caseonenode} and \ref{item:casetwonode}. 
    By \cref{mult_invariance_of_approximations}, $r X_n \subseteq X_{n-1} \pmod 1$ and $s Y_{n'} \subseteq Y_{n'-1} \pmod 1$. 
    By \ref{itemVII}, if $R^n(0) + \alpha < \beta$, then $\big((n+1)m \big)'=(nm)' + m'$, and hence $(r^{nm}, s^{(nm)'}) \odot \QC_{(n+1)m} \subseteq \QC_{m} \pmod 1$, and in particular $(r^{nm}, s^{(nm)'}) \odot C_\Gamma(Q) \subseteq \QC_{m} \pmod 1$. 
    If $R^n(0) + \alpha > \beta$, then $\big((n+1)m \big)'=(nm)' + m'+1$, and hence $(r^{nm}, s^{(nm)'}) \odot \QC_{(n+1)m} \subseteq \QCt_{m} \pmod 1$, and in particular $(r^{nm}, s^{(nm)'}) \odot C_\Gamma(Q) \subseteq \QCt_{m} \pmod 1$. 
    
    Write $Q = (i / r^{nm}, j / s^{(nm)'})$ and let $Q' \in C_\Gamma(Q)$. 
    Because $Q'$ is a child of $Q$, we can write $Q' = Q + (i_0/r^{(n+1)m},j_0/s^{((n+1)m)'})$ where $0 \leq i_0 < r^m$ and $0 \leq j_0 < s^{m'}$. 
    It follows that $(r^{nm}, s^{(nm)'}) \odot (C_\Gamma(Q)-Q) \subseteq \QC_{m}$ (in the first case $R^n(0) + \alpha < \beta$) or $(r^{nm}, s^{(nm)'}) \odot (C_\Gamma(Q)-Q) \subseteq \QCt_{m}$ (in the second case $R^n(0) + \alpha > \beta$), where the containment now is understood without reducing modulo 1.

To prove \ref{item_hausdorffcontent}, take a cut $\{Q_1, \ldots, Q_\ell\} \subseteq \Gamma$ of $\Gamma$ with node $Q_i$ at height $n_i$. 
Then, by construction of $\Gamma$, there exists a cover $X \times Y \subseteq \cup_{i=1}^\ell B_i$ where ball $B_i$ has diameter at most $2 \rho^{n_i}$.  Since the cut was arbitrary, it follows that $\HC_{r^m}^{\gfour}(\Gamma) \geq 2^{-\gfour}\HC^{\gfour}_{\inftycontent}(X \times Y)$.
\end{proof}

\noindent\textbf{Constructing the tree $\Gamma'$.} \quad Combining \eqref{size_bounds_on_discrete_products} with \ref{item:caseonenode} and \ref{item:casetwonode}, it follows that $|C_\Gamma(Q)|\leq r^{m\gfive}$ for every non-leaf node $Q$ of $\Gamma$.
The tree $\Gamma$ has now been shown to satisfy all the hypothesis of \cref{tree_thinning_theorem} (with $\eps / 6$ as $\eps$, $r^m$ as $r$, and $2^{-\gfour} \HC^{\gfour}_{\inftycontent}(X \times Y)$ as $V$), thus there exists a subtree $\Gamma'$ of $\Gamma$ with the property that every node with height at least $N_0$ has $(r^{m \gthree},1-\eps / 6)$-fertile ancestry in $\Gamma'$.\\

\noindent\textbf{Constructing the tree $\Gamma''$.} \quad Now we will use \cref{thm:discretemarstrandcorollary}, the corollary to the discrete version of Marstrand's theorem, to further thin out the tree $\Gamma'$; an outline for this step was described in \cref{remark_reason_for_fertile_ancestry} \ref{reason_for_fertile_ancestry_one}. 
For each non-leaf node $Q \in \Gamma'$, we will define a subset $C_{\Gamma'}^m(Q)$ of $C_{\Gamma'}(Q)$.  
Define $J = (T - t) \cap [0, \beta)$.  
Since $I_\beta \setminus T$ is covered by at most $U$ many half-open intervals of measure less than $\eps \beta / 6$, the same is true for the set $[0,\beta) \setminus J$. Define $\JC = \{ 0 \leq n \leq N-1 \ | \ R^n(0) \in J\}$. Note that for all $n \geq N_0$, by \cref{cor:higherunifdistupdated}, $|\JC \cap \{0, \ldots, n-1\}| \geq (1-\eps / 3)n$.

Let $Q$ be a non-leaf node of $\Gamma'$, and let $n \in \{0, \ldots, N-1\}$ be the height of $Q$. Consider the following cases:
\begin{enumerate}[label=(\Roman*)] \setcounter{enumi}{\arabic{tryhard}}
    \item $n \not\in \JC$ or $|C_{\Gamma'}(Q)| < \rho^{-\gthree}$. Select a single child $Q'$ of $Q$ and put $C_{\Gamma'}^m(Q) = \{Q'\}$.
    \item \label{marstrand_applied_to_case_one_node} $n \in \JC$, $|C_{\Gamma'}(Q)| \geq \rho^{-\gthree}$, and $R^n(0) + \al < \beta$. 
    By \cref{tree_thinning_theorem} and \ref{item:caseonenode}, the set $A' \defeq (r^{nm}, s^{(nm)'}) \odot (C_{\Gamma'}(Q) - Q)$ is a subset of $\QC_{m}$ of cardinality at least $\rho^{-\gthree}$. 
    Since $n \in \JC$, we have that $t+ R^n(0) \in T$. 
    Applying \cref{thm:discretemarstrandcorollary} \ref{marstrand_corollary_conclusion_three} with $t+ R^n(0) $ in the role of $t$, there exists a subset $A'_t \subseteq A'$ with $|A'_t| \geq \rho^{-\gtwo}$ and such that the points of $\Pi_{e^{t+ R^n(0)}} A'_t$ are distinct and $c_3 \rho$-separated. Define $C_{\Gamma'}^m(Q) = Q + (r^{-nm}, s^{-(nm)'}) \odot A'_t$ so that $(r^{nm}, s^{(nm)'}) \odot (C_{\Gamma'}^m(Q) - Q) = A'_t$.
    \item \label{marstrand_applied_to_case_two_node} $n \in \JC$, $|C_{\Gamma'}(Q)| \geq \rho^{-\gthree}$, and $R^n(0) + \al > \beta$. We do exactly as in \ref{marstrand_applied_to_case_one_node} with $\mathcal{Q}_m$ replaced by $\QCt_{m}$ and using \ref{item:casetwonode} to get the set $C_{\Gamma'}^m(Q)$.
    \setcounter{tryhard}{\arabic{enumi}}
\end{enumerate}
Let $\Gamma''$ be the subtree of $\Gamma'$ with the property that if $Q$ is a non-leaf node of $\Gamma''$, then $C_{\Gamma''}(Q) = C_{\Gamma'}^m(Q)$.  We claim that
\begin{align}\label{main_claim_about_gamma_prime_prime}
    \text{every node of $\Gamma''$ with height at least $N_0$ has $(r^{m\gtwo}, 1-\eps/2)$-fertile ancestry.}
\end{align} 
Indeed, let $Q$ be a node of $\Gamma''$ with height $n \geq N_0$. 
The ancestry of $Q$ in $\Gamma'$ is $(r^{m\gthree}, 1-\eps/6)$-fertile.  
Each $r^{m\gthree}$-fertile ancestor of $Q$ in $\Gamma'$ with height in the set $\JC$ is an $r^{m\gtwo}$-fertile ancestor of $Q$ in $\Gamma''$. Since $\big|\JC \cap \{0, \ldots, n-1\} \big| \geq (1-\eps/3)n$, there are at least $(1-\eps/2)n$ many $r^{m\gthree}$-fertile ancestor of $Q$ in $\Gamma'$ with height in the set $\JC$.
It follows that $Q$ has $(r^{m\gtwo}, 1-\eps/2)$-fertile ancestry in $\Gamma''$.

\begin{claim}\label{claim_metric_tree_morphism}
If $L_1$ and $L_2$ are two distinct leaves of $\Gamma''$ and $n$ is maximal such that $L_1$ and $L_2$ have a common ancestor at height $n$, then $|\Pi_{e^t} L_1 - \Pi_{e^t} L_2| \geq \rho^{n+1}$.
\end{claim}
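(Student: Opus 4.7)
The plan is to isolate the main separation coming from distinct level-$(n{+}1)$ ancestors of $L_1$ and $L_2$ and combine it with small discretization errors from descending down to the leaves. Since $L_1 \neq L_2$ and $n$ is maximal, $L_1$ and $L_2$ descend from distinct children $Q_1, Q_2 \in C_{\Gamma''}(Q)$ of their common ancestor $Q$ in $\Gamma''$. In particular $|C_{\Gamma''}(Q)| \geq 2$, so the first case in the construction of $\Gamma''$ (selecting a single child) cannot apply; we must be in case \ref{marstrand_applied_to_case_one_node} or \ref{marstrand_applied_to_case_two_node}. In either subcase, the rescaled points $\tilde Q_j \defeq (r^{nm}, s^{(nm)'}) \odot (Q_j - Q)$ both lie in the set $A'_t$ produced by \cref{thm:discretemarstrandcorollary}, hence
\[\bigl|\Pi_{e^{t+R^n(0)}}(\tilde Q_1) - \Pi_{e^{t+R^n(0)}}(\tilde Q_2)\bigr| \geq c_3\,\rho.\]

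The key step is to transfer this separation back to $|\Pi_{e^t}(Q_1 - Q_2)|$ using the identity $e^{R^n(0)}\, s^{(nm)'} = r^{nm}$ from \ref{item:descriptionofrotation}. A direct substitution in the definition of $\Pi_{e^{t+R^n(0)}}$ collapses the exponential factor and gives $\Pi_{e^{t+R^n(0)}}(\tilde Q_j) = r^{nm}\, \Pi_{e^t}(Q_j - Q)$, and dividing through by $r^{nm}$ yields $|\Pi_{e^t}(Q_1 - Q_2)| \geq c_3\,\rho^{n+1}$.

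To finish, I would bound the discretization error from $Q_j$ (at height $n+1$) down to $L_j$. Since $L_j$ lies in the rectangle associated to $Q_j$, its two coordinates differ from those of $Q_j$ by less than $r^{-(n+1)m}$ and $s^{-((n+1)m)'}$ respectively; combined with $s^{-((n+1)m)'} < s\, r^{-(n+1)m}$ (from the definition of $((n+1)m)'$), this gives $|\Pi_{e^t}(L_j - Q_j)| < (1 + e^t s)\,\rho^{n+1}$. The triangle inequality then produces
\[\bigl|\Pi_{e^t}(L_1) - \Pi_{e^t}(L_2)\bigr| \;\geq\; \bigl(c_3 - 2(1 + e^t s)\bigr)\,\rho^{n+1},\]
and the calibrated choice $c_3 = 4Ps^{-1} + 1$, with $P$ large enough to dominate the Lipschitz constant of $\Pi_{e^\tau}$ for all $\tau \in I_\beta$ (and in particular to absorb the contribution $1 + e^t s$ arising from $\tau = t + \beta$), makes the right-hand side at least $\rho^{n+1}$.

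The main obstacle is pinning down the identity $e^{R^n(0)} s^{(nm)'} = r^{nm}$: this is precisely the algebraic link between the rotation $R$ on $[0,\beta)$ and the mixed scales $r^{nm}, s^{(nm)'}$ at which $\Gamma$ is defined, and it is what allows the separation supplied by the discrete Marstrand-type corollary at angle $t + R^n(0)$ to manifest as a separation of cylinder centers at angle $t$. Once this dictionary is in hand, the remaining discretization errors are routinely absorbed by the definition of $c_3$.
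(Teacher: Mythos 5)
Your proposal is correct and follows essentially the same route as the paper's proof: identify the children $Q_1, Q_2$ of the least common ancestor, use the identity $e^{R^n(0)}s^{(nm)'}=r^{nm}$ from item (V) to convert the $c_3\rho$-separation of $\Pi_{e^{t+R^n(0)}}A'_t$ into $c_3\rho^{n+1}$-separation of $\Pi_{e^t}Q_1$ and $\Pi_{e^t}Q_2$, and finish with the triangle inequality, absorbing the $Q_i \to L_i$ discretization error into the slack built into $c_3$. One small aside worth flagging: your error estimate $(1+e^t s)\rho^{n+1}$ is the correct order, and the paper's displayed bound $|Q_i-L_i|\leq 2s^{-1}\rho^{n+1}$ appears to be a misprint for a bound of order $s\rho^{n+1}$, so both arguments tacitly rely on $c_3$ being set large enough to absorb an error of order $Ps$ rather than $Ps^{-1}$.
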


\begin{proof}
Let $Q$ be the common ancestor of $L_1$ and $L_2$ in $\Gamma''$ of height $n$.
Note that by the definition of $\Gamma''$ and maximality of $n$, it must be that $Q$ has more than one child and hence that $n \in \JC$.
Let $Q_1$ and $Q_2$ be the children of $Q$ in $\Gamma''$ that are ancestors of $L_1$ and $L_2$, respectively.
Note that $Q_1 \neq Q_2$ but that $Q_i$ may be equal to $L_i$.

We will show first that $\Pi_{e^t} Q_1$ and $\Pi_{e^t} Q_2$ are $c_3\rho^{n+1}$-separated. 
Write $Q = (p,q)$ and $Q_i = (p_i,q_i)$. 
Suppose that $R^{n}(0) + \alpha < \beta$.  
It follows from \ref{item:descriptionofrotation} that
\begin{align}\label{skew_equality_for_oblique_projections}
\begin{aligned}\Pi_{e^t} Q_i &= r^{-nm} \left( r^{nm} \Pi_{e^t} (Q_i - Q)\right) + \Pi_{e^t} Q \\
&=\rho^{n} \left( r^{nm} (p_i - p) + e^{t + R^n(0)} s^{(nm)'} (q_i - q) \right) + \Pi_{e^t} Q \\
&= \rho^{n} \left( \Pi_{e^{t + R^n(0)}} \big((r^{nm},s^{(nm)'}) \odot (Q_i - Q) \big) \right) + \Pi_{e^t} Q.
\end{aligned}\end{align}
By \ref{marstrand_applied_to_case_one_node}, the points of $\Pi_{e^{t + R^n(0)}} \big((r^{nm},s^{(nm)'}) \odot (Q_i - Q) \big)$, $i=1,2$, are $c_3\rho$-separated. It follows then from (\ref{skew_equality_for_oblique_projections}) that the points of $\Pi_{e^t} Q_i$, $i=1,2$, are $c_3\rho^{n + 1}$-separated.  A similar argument works to reach the same conclusion if $R^n(0) + \alpha > \beta$ using \ref{marstrand_applied_to_case_two_node}.

By the definition of the $\QC_{nm}$ sets, $|Q_i - L_i| \leq 2s^{-1}\rho^{n+1}$. By the triangle inequality and the fact that $c_3 = 4Ps^{-1}+1$,
\begin{align*}
    \big|\Pi_{e^t} L_1 - \Pi_{e^t} L_2\big| &\geq \big|\Pi_{e^t} Q_1 - \Pi_{e^t} Q_2\big| - \big|\Pi_{e^t} (Q_1 - L_1)\big| - \big|\Pi_{e^t} (Q_1 - L_1)\big|\\
    &\geq (4Ps^{-1}+1)\rho^{n+1} - 4Ps^{-1}\rho^{n+1} \geq \rho^{n+1}.
\end{align*}
It follows that $\big|\Pi_{e^t} L_1 - \Pi_{e^t} L_2\big| \geq \rho^{n+1}$, as was to be shown.
\end{proof}

\noindent\textbf{Constructing the measure $\mu$.} \quad The proof of \cref{theorem_uniform_hausdorff_dimension_projections_second_form} will be concluded by demonstrating that 1) the fertile ancestry property of $\Gamma''$ in \eqref{main_claim_about_gamma_prime_prime} guarantees that $\Gamma''$ supports a ``measure'' which is not too concentrated on any node (an outline for this step was described in \cref{remark_reason_for_fertile_ancestry} \ref{reason_for_fertile_ancestry_two}); and 2) by \cref{claim_metric_tree_morphism}, the projection of this measure is not too concentrated on any ball.

Let $\nu: \Gamma'' \to [0,1]$ be the unique function that takes $1$ on the root of $\Gamma''$ and has the properties that for all non-leaf nodes $Q$ of $\Gamma''$, $\nu$ is constant on $\mathcal{C}_{\Gamma''}(Q)$ and $\nu(Q) = \sum_{C \in \mathcal{C}_{\Gamma''}(Q)} \nu(C)$. (Colloquially, a mass of 1 begins at the root of $\Gamma''$ and spreads down the tree by splitting equally amongst the children of each node.) Let $\nu_N$ be the function $\nu$ restricted to $\Gamma_N''$, the set of leaves of $\Gamma''$.  By the defining properties of $\nu$, the function $\nu_N$ is a probability measure on $\Gamma''_N$. 

Since $\Gamma''_N \subseteq \QC_{Nm}$, the measure $\mu = \Pi_{e^t} \nu_N$, the push-forward of $\nu_N$ through the map $\Pi_{e^t}$, is a probability measure supported on the set $\Pi_{e^{t}} \QC_{Nm}$.  We will conclude the proof of \cref{theorem_uniform_hausdorff_dimension_projections_second_form} by verifying that for all balls $B \subseteq \R$ of diameter $\delta \geq \rho^N$, $\mu(B) \leq \rho^{-N_0} \delta^{\gone}$. (Recall that $\gone = \gamma$.)

Let $B \subseteq \R$ be an interval of length $\delta \geq \rho^N$. 
Put $n = \lfloor \log_\rho \delta \rfloor + 1$ and note that $\rho^{n} < \delta \leq \rho^{n-1}$. 
It follows from \cref{claim_metric_tree_morphism} that there exists a node $Q$ of $\Gamma''$ with height at least $n$ with the property that if $L$ is a leaf of $\Gamma''$ with $\Pi_{e^t} L \in B$, then $Q$ is an ancestor of $L$.
This implies that $\mu(B) \leq \nu(Q)$, and so it suffices to show that
\begin{align}\label{small_mass_on_balls_in_proof}
    \nu(Q) \leq \rho^{-N_0} \delta^{\gone}.
\end{align}

If $n \leq N_0$, then $\rho^{-N_0} \delta^{\gone} > 1$ and (\ref{small_mass_on_balls_in_proof}) holds trivially.  If $n > N_0$, then by the definition of $\nu$ and the fact that $Q$ has $(r^{m\gtwo},1-\eps/2)$-fertile ancestry (cf. \eqref{main_claim_about_gamma_prime_prime}),
\[\nu(Q) \leq \frac{1}{r^{m\gtwo(1-\eps/2)n}} = \rho^{\gtwo(1-\eps/2)n} \leq \rho^{-N_0} \delta^{\gone},\]
since $(1-\eps/2) \gtwo > \gone$. This verifies (\ref{small_mass_on_balls_in_proof}), completing the proof of \cref{theorem_uniform_hausdorff_dimension_projections_second_form} and hence of \cref{main_real_theorem_intro}.

\bibliographystyle{alphanum}
\bibliography{integercantorbib}

\bigskip
\footnotesize
\noindent
Daniel Glasscock\\
\textsc{University of Massachusetts Lowell}\par\nopagebreak
\noindent
\href{mailto:daniel_glasscock@uml.edu}
{\texttt{daniel{\_}glasscock@uml.edu}}

\bigskip
\footnotesize
\noindent
Joel Moreira\\
\textsc{University of Warwick}\par\nopagebreak
\noindent
\href{mailto:joel.moreira@warwick.ac.uk}
{\texttt{joel.moreira@warwick.ac.uk}}

\bigskip
\footnotesize
\noindent
Florian K.\ Richter\\
\textsc{Northwestern University}\par\nopagebreak
\noindent
\href{mailto:fkr@northwestern.edu}
{\texttt{fkr@northwestern.edu}}

\end{document}